\newtheorem{theorem}{Theorem}[section]
\newtheorem{definition}[theorem]{Definition}
\numberwithin{equation}{section}
\DeclareMathOperator*{\argmin}{arg\,min}
\definecolor{dkgreen}{rgb}{0,0.6,0}
\definecolor{gray}{rgb}{0.5,0.5,0.5}
\title{Comparison of Coarsening Dynamics for the Cahn--Hilliard and Burgers--Cahn--Hilliard equations
\renewcommand\thefootnote{}
\thanks{MSC 2020: 
35Q35,
76T99,
76D05,
35Q30
}
}
\author{Peter Howard
\thanks{Department of Mathematics, 
    Texas A\&M University, 
    College Station, 
    TX 77843, USA, 
    phoward@math.tamu.edu}
\and
Adam Larios
\thanks{Department of Mathematics, 
        University of Nebraska--Lincoln,
        Lincoln, NE 68588-0130, USA, 
        alarios@unl.edu}
\and
Quyuan Lin
\thanks{School of Mathematical and Statistical Sciences,              
    Clemson University, 
    Clemson, SC 29634, USA, 
    quyuanl@clemson.edu}
}
\date{May 20, 2024}
\begin{document}

\maketitle

\begin{abstract} 
We consider coarsening dynamics associated with a Burgers--Cahn--Hilliard
system modeling a two-phase flow in one space 
dimension. Our emphasis is on the effect that coupling 
between the phase and fluid dynamics has on coarsening rates, and on the 
mechanisms driving this effect. We start with a detailed
examination of coarsening dynamics for the uncoupled Cahn--Hilliard 
equation, comparing numerically generated rates with two analytic 
methods, and then we consider how these dynamics are affected by 
appropriate coupling with a viscous Burgers equation. In order to keep 
the analysis as self-contained as possible, we establish the global well-posedness of the system  under consideration. 
\end{abstract}


\noindent \textbf{Keywords:}
Cahn--Hilliard equation, 
two-phase flow,  
Burgers equation, 
Navier-Stokes equation,
coarsening dynamics, 
fluid dynamics.


\section{Introduction}\label{introduction}

Coarsening dynamics for solutions to single uncoupled Cahn--Hilliard equations
are relatively well understood, but less is known about the effect on 
coarsening when a Cahn--Hilliard equation is coupled with an appropriate
fluids model to generate two-phase flow. Our primary goal in the current 
analysis is to consider such couplings, and to make some baseline 
observations for future study. For this initial investigation, we would 
especially like to emphasize comparisons between 
analytical methods and computational results, and robust analytical methods
of coarsening dynamics only seem to be available in the one-dimensional (1D) case. 
Accordingly, we will restrict our considerations to the one-dimensional setting, 
bearing in mind that this restriction places natural limitations on our choice 
of fluid model. In particular, since the only incompressible flows in 1D are trivial 
(i.e., piecewise constant), we do not impose incompressibility, and rather proceed 
by modeling fluid velocity with the viscous Burgers equation. 
While somewhat artificial, this one-dimensionalization paradigm has proven fruitful in numerous
related contexts, 
and we believe it has potential to shed substantial insight into the setting of phase-field/fluid coupling.  

Having settled on a fluids model, we turn to the choice of coupling, 
and for this we are guided by the well-known ``model H" from \cite{HH77} (see also \cite{diegel2017convergence, 
gal2010asymptotic, giorgini2019uniqueness, GPV96}). In particular, ``model H" provides our 
coupling term for the momentum equation, and further energy considerations determine 
our coupling for the phase-field equation. (See Section \ref{model_section} 
below for details.) 



With the above considerations in mind, we propose to study the following system, 
which we will refer to as the Burgers--Cahn--Hilliard (BCH) system,
\begin{equation} \label{pre-main}
\left\{\begin{aligned}
\phi_t + v \phi_x &= (M (\phi)\mu_{x})_x, \\
\mu &= -\kappa \phi_{xx} + F' (\phi), \\
v_t + v v_x &= \nu v_{xx} + K \mu \phi_x,
\end{aligned}\right.
\end{equation}
where specifications for the constants $\nu$, $\kappa$, and $K$, 
and the functions $M(\phi)$ and $F(\phi)$ will be given following 
system \eqref{main_sys} below. We observe that 
when $\phi \equiv 0$, we formally recover the viscous Burgers equation, 
and when $v \equiv 0$ and the last equation is eliminated, 
we formally recover the usual Cahn--Hilliard equation.
We view this model as a one-dimensional analogue for the 2D or 3D
two-phase Cahn--Hilliard--Navier--Stokes (CHNS) system, given by
\begin{equation} \label{main_sys}
\left\{\begin{aligned}
\phi_t + v \cdot \nabla \phi &=
\nabla \cdot (M (\phi) \nabla \mu), \\
%
\mu &= -\kappa \Delta \phi + F' (\phi), \\
%
\rho(v_t + (v\cdot \nabla)v)
&= \nu \Delta v - \nabla p + g + K \mu \nabla \phi, \\ 
\nabla \cdot v &= 0,
\end{aligned}\right.
\end{equation} 
over some open domain $\Omega\subset\mathbb{R}^d$, $d = 2$ or $d = 3$,
and some time interval $[0,T]$ with $T>0$.  
Denoting $Q:=\Omega \times [0, T]$,  $\phi:Q\rightarrow \mathbb{R}$ is a phase variable, 
$v:Q\rightarrow \mathbb{R}^d$
is the fluid velocity, $M:\mathbb{R}\rightarrow \mathbb{R}$ is molecular mobility, 
$\mu:Q\rightarrow \mathbb{R}$ is the chemical potential, $\rho > 0$ is
(constant) fluid density, $\kappa>0$ is a (constant) measure of interfacial 
energy, $F:\mathbb{R}\rightarrow \mathbb{R}$ is bulk free energy density, 
$\nu>0$ is the (constant) coefficient of shear viscosity, $p:Q\rightarrow \mathbb{R}$
is the fluid pressure, $g:Q\rightarrow \mathbb{R}^d$ is a given body force, 
and $K \in \mathbb{R}$ is a coupling constant. 
System \eqref{main_sys} is precisely ``model H" from 
\cite{HH77}, though expressed here in the form of system (54)
from \cite{GPV96}. This system has been the subject of numerous 
studies, including 
\cite{
diegel2017convergence, 
gal2010asymptotic, 
giorgini2019uniqueness, 
GPV96, 
HH77, 
NovickCohen_2000,
SHH76}, 
and closely related models have been considered in 
\cite{AF08, Bray94, Chen2020, gal2011instability, HW2015, LS03, LT98, you2022continuous, Zhao2021}.
(See Section 3 of the current analysis for additional information on how 
these models are related.) In the references mentioned so far, 
the two fluid phases are assumed to have the same constant density, 
but models allowing for different densities have also been 
introduced and studied (see, e.g., \cite{Brummelen2015, Guo2014, hintermuller2018goal,kay2007efficient}).
In addition, the Navier--Stokes system has been coupled with phase-field models
for which three or more phases are present \cite{BLMPQ2010,Kim2005,Kim2007,KL2005, Minjeaud2013}. 
Coarsening rates for the uncoupled Cahn--Hilliard equation are 
discussed in \cite{Chen2020, H11, KO02, L71}, and coarsening rates for 
Cahn--Hilliard equations coupled with a Navier--Stokes system are discussed 
in \cite{Chen2020, HW2015, Zhao2021}, though with a different choice
of coupling than the one taken here. We also mention 
that coarsening rates for CHNS are related to coarsening rates
for the convective Cahn--Hilliard equation, and these are analyzed 
in \cite{watson2003coarsening}. See also 
\cite{EB1996, Leung1988, Leung1990, M-P2016, M-PK2013, Witelski1996} for additional 
work on the convective Cahn--Hilliard equation. 

The primary contributions of this paper are as follows. First, 
we review and compare two analytical coarsening methods for the 
uncoupled Cahn--Hilliard equation, taken from \cite{L71} and 
\cite{H11}. The method of \cite{L71} is based on late-stage 
coarsening, but provides a straightforward expression for 
characteristic coarsening length that can readily 
be computed at all times (see equation \eqref{langer_period} below).
The approach of \cite{H11} adapts the method of \cite{L71} so that earlier 
times are more readily incorporated, yet interestingly the two methods
give extremely similar results up to a scaling factor that will be 
discussed in detail in Section \ref{ch-coarsening}. 
In Section \ref{sec-computational-results} 
we numerically generate coarsening rates based on randomized data, 
and observe that the two analytic methods and the numerically generated rates align 
quite well. In particular, it is well-known (see, e.g., 
\cite{L71, watson2003coarsening}) that coarsening rates are logarithmic
in this case, and our calculations bear this out quantitatively. 
An important aspect of this part of the study is the identification 
of a systematic method for associating a natural length scale $\ell$ with 
any solution to the uncoupled Cahn--Hilliard equation with energy below 
the energy of a natural homogeneous configuration $\phi_0$
described below. 

Having established this firm baseline of coarsening behavior for the 
uncoupled Cahn--Hilliard equation, we turn next in Section \ref{coupled-system-section} 
to our primary overall goal, which is to understand the effect of 
coupling on these coarsening rates. Our main observation for this part of the 
study is that coarsening for the coupled system (with coupling constant
$K = 1$) is orders of magnitude faster than coarsening for the uncoupled system 
(see Figure \ref{fig_periods_all}). Indeed, for the two specific cases 
depicted in Figure \ref{fig_periods_all}, the coarseness achieved by the coupled 
system at times $t_c = 0.25$ and $t_c = 0.34$ (respectively for the two cases)  
is not achieved by the uncoupled system until $t_u = 76.0$, and the order of magnitude 
multiplier between $t_c$ and $t_u$ increases still further at later stages 
of coarseness. In short, gradients in the velocity profile $v(x,t)$ tend to either push transitions 
layers together or pull them apart, and in either case the phase profile becomes more 
coarse than in the uncoupled case. (These dynamics are discussed in detail in 
Section \ref{coupled-system-section}). Nonetheless, it is interesting to 
note that the coarsening rate in both cases appears logarithmic, albeit
with substantially different rate constants. 

{\it Plan of the paper.} In Section \ref{model_section}, we discuss 
considerations that led to our selection of the system \eqref{main_sys} 
as the starting point for this study, along with the reduction to 
\eqref{pre-main}. In Section \ref{ch-coarsening}, we describe the 
analytical methods of coarsening from \cite{L71} and \cite{H11} for 
the uncoupled Cahn--Hilliard equation, and in Section \ref{sec-computational-results} 
we turn to computational results, beginning with an overview 
of our numerical methods (Section \ref{overview-numerical-section}). 
In Section \ref{uncoupled-system-section}, 
we compare coarsening rates obtained analytically with those 
obtained by computation, and in Section \ref{coupled-system-section}
we study the effect of coupling on these rates. 
In Section \ref{chns_section}, 
we establish the global well-posedness of our system \eqref{pre-main}, and in Section \ref{conclusions-section}
we further summarize our results and discuss future directions of inquiry.



\section{Models of Two-Phase Flow} \label{model_section}

Several models have been proposed for coupling Cahn--Hilliard
equations and systems with a momentum equation for fluid dynamics, 
and in this section we discuss our selection of \eqref{main_sys} as 
a starting point for the current study. Aside from our 
explicit incorporation of the mobility function $M(\phi)$, 
\eqref{main_sys} is taken from system (54) from \cite{GPV96},
in which the authors have expressed the well-known ``Model H'' 
from \cite{HH77} in a convenient form. 

\subsection{The pressure and one-dimensionalization}

One approach toward introducing a non-trivial one-dimensional 
analogue to \eqref{main_sys} is to replace 
all spatial derivatives in the momentum equation with their 
one-dimensional counterparts, and to drop the incompressibility 
constraint $\nabla \cdot v = 0$ (which requires a constant or 
piecewise constant flow in one space dimension). Since the main 
role of the pressure is to act as a Lagrange multiplier to enforce 
this constraint, we also drop the pressure gradient $\nabla p$ from 
the system (as Burgers and Bateman did in deriving Burgers equation; 
see \cite{Bateman_1915_burgers,Burgers_1948}), although one should 
be mindful when doing this that in the current setting the 
variable $p$ is subject to various interpretations. Most 
directly for the present work, in the careful derivation of 
\cite{GPV96}, the authors show that in the momentum equation 
of \eqref{main_sys} the pressure $p$ is more properly given by
\begin{equation} \label{gpv96pressure}
    p = p_{o} - \frac{\kappa}{6} |\nabla \phi|^2 + F(\phi),
\end{equation}
where $p_{o}$ denotes {\it original} pressure, as described
in \cite{GPV96} (see system (54) and the adjoining discussion 
in that reference). 
In this way, when we drop the pressure from 
our system, we are really dropping the right-hand side of 
\eqref{gpv96pressure}. 

More generally, models with coupling terms other than the 
one in \eqref{main_sys} have been proposed, and it is 
worth noting how $p$ should be interpreted in some such 
cases. For example, in \cite{Bray94} the author 
proposes a momentum equation of the form 
\begin{equation} \label{bray94momentum}
    \rho (v_t + (v\cdot\nabla)v)
    = \eta \Delta v - \nabla p - \phi \nabla \mu,
\end{equation}
(equation (47) in \cite{Bray94})
which agrees with the 
coupling in \eqref{main_sys} (with $K = 1$) if $p$ is replaced by 
\begin{equation}
    p = p_o - \mu \phi.
\end{equation}
Likewise, in \cite{LS03},
the authors derive the 
momentum equation 
\begin{equation} \label{LS03momentum}
    \rho (v_t + (v\cdot\nabla)v)
    = \eta \Delta v - \nabla p 
    - c \nabla \cdot (\nabla \phi \otimes \nabla \phi)
\end{equation}
(equation (2.1) of \cite{LS03}).
Noting the identity 
\begin{equation} \label{noted-identity}
\mu \nabla \phi = 
\rho \left(-\kappa \nabla \cdot (\nabla \phi \otimes \nabla \phi) 
+ \nabla \left(\frac{\kappa}{2} |\nabla \phi|^2 + F (\phi)\right) \right),
\end{equation}
we see that the momentum equation in \eqref{main_sys} is 
recovered in this case by choosing $K$ so that 
$c = K \rho \kappa$ and replacing $p$ with 
\begin{equation}
    p = p_0 - \frac{c}{\kappa} \left(\frac{\kappa}{2} |\nabla \phi|^2 + F(\phi)\right).
\end{equation}

The point that we would like to emphasize with this discussion 
is that given our choice of reducing to the one-dimensional 
case by dropping $p$ from some three-dimensional system,
various reasonable coupling terms are possible. Our choice to 
work with the coupling term in \eqref{main_sys} is taken both 
because \eqref{main_sys} is a commonly used model, and because 
it provides a coupling term that is relatively convenient
for analysis (probably not entirely unrelated facts).

We close this subsection by noting that in addition to the 
models of incompressible two-phase flows described just 
above, several models for compressible two-phase flows
have been proposed. See, for example, \cite{AF08, LT98}. 

\subsection{Divergence-form coupling}
Since our study is in one spatial dimension, we are not assuming the velocity $v$ is divergence-free to avoid trivial cases, as discussed above. Clearly then, \eqref{pre-main} does not preserve mass (unlike in the divergence-free case), since in general, $\frac{d}{dt}\int_{-L}^{+L}\phi\,dx =-\int_{-L}^{+L}v\phi_x\,dx\neq0$.   Hence, one might also consider a divergence form of the advection; namely, with the advective term $v\phi_x$ replaced by $(v\phi)_x$, resulting in the following system.
\begin{equation}\label{pre-main_div_form1}
\left\{\begin{aligned}
\phi_t + (v \phi)_x &= (M (\phi)\mu_{x})_x, \\
\mu &= -\kappa \phi_{xx} + F' (\phi), \\
v_t + v v_x &= \nu v_{xx} + K \mu \phi_x.
\end{aligned}\right.
\end{equation}
However, as pointed out in Section 
\ref{energy_balance_section} below (see equation \eqref{energy_balance_div_form}), this form does not conserve energy.  One can modify the coupling in the phase equation to get an equation that conserves both mass and energy, such as in the following system.
\begin{equation}\label{pre-main_div_form2}
\left\{\begin{aligned}
\phi_t + (v \phi)_x &= (M (\phi)\mu_{x})_x, \\
\mu &= -\kappa \phi_{xx} + F' (\phi), \\
v_t + v v_x &= \nu v_{xx} - K \mu_x \phi.
\end{aligned}\right.
\end{equation}
However, this system has a significantly different coupling in the momentum equation than that of the CHNS system \eqref{main_sys}, and hence strays from our goal of finding a suitable one-dimensional analogue of \eqref{main_sys}.  Therefore, we make the choice to focus on system \eqref{pre-main},  giving up on mass conservation but retaining  energy conservation and the form of the momentum coupling. However, in Section \ref{sec-computational-results}, we briefly explore these divergence-form systems and demonstrate that their solutions appear qualitatively similar to those of \eqref{pre-main} in many respects.

\section{Coarsening Rates for Cahn--Hilliard Equations} 
\label{ch-coarsening}

In this section, we review coarsening dynamics for the uncoupled
Cahn--Hilliard equation 
\begin{equation} \label{ch}
\phi_t = 
\Big( M(\phi) (-\kappa \phi_{xx} + F' (\phi))_x \Big)_x,
\end{equation}
where $\kappa > 0$ and for this discussion we will make the 
following assumptions on $M$ and $F$. 

\medskip
\noindent
{\bf (A)} $M \in C^2 (\mathbb{R})$, and there exists a constant 
$m_0 > 0$ so that with $M (\phi) \ge m_0$ for all $\phi \in \mathbb{R}$;
$F \in C^4 (\mathbb{R})$ has a double-well form: there exist real
numbers $\alpha_1 < \alpha_2 < \alpha_3 < \alpha_4 < \alpha_5$ so
that $F$ is strictly decreasing on $(-\infty, \alpha_1)$ and
$(\alpha_3, \alpha_5)$ and strictly increasing on $(\alpha_1, \alpha_3)$
and $(\alpha_5, +\infty)$, and additionally $F$ is concave up on
$(-\infty, \alpha_2) \cup (\alpha_4, +\infty)$ and concave down on
$(\alpha_2,\alpha_4)$.  

\medskip

\begin{remark} With its origins in the work of John W. Cahn 
and John E. Hilliard, especially \cite{Cahn1961, CH58}, equation \eqref{ch}
is now well established as a foundational model of phase separation dynamics. 
Although a general review of references on \eqref{ch} is far 
beyond the scope of this discussion, we mention that our approach 
and methods are closely related to the work on initiation of 
phase separation in \cite{Grant91, Grant93}, the analyses of periodic 
solutions in \cite{H09, H11}, the analyses of kink and antikink solutions in 
\cite{BKT1999, CCO2001, H07, OW2014}, and the analyses of coarsening
rates in \cite{H11, L71, watson2003coarsening}.
\end{remark}

We observe at the outset that for each $F$ satisfying Assumptions {\bf (A)}, 
there exists a unique pair of values $\phi_1$ and $\phi_2$ 
(the {\it binodal} values) so that
\begin{equation}
F'(\phi_1) = \frac{F(\phi_2) - F(\phi_1)}{\phi_2 - \phi_1} = F'(\phi_2)
\end{equation}
and such that the line passing through
$(\phi_1, F(\phi_1))$ and $(\phi_2, F(\phi_2))$ lies entirely on or 
below $F$. (See Figure \ref{F-figure}.)
Also, we note that for any linear function 
$G(\phi) = A \phi + B$ we can replace $F(\phi)$ in \eqref{ch} with 
$H(\phi) = F(\phi) - G(\phi)$ without changing the equation in 
any way.  If we choose 
\begin{equation*}
G(\phi) := \frac{F(\phi_2) - F(\phi_1)}{\phi_2 - \phi_1} (\phi - \phi_1) + F(\phi_1),
\end{equation*}
then $H(\phi)$ has local minima at the binodal values, 
with $H(\phi_1) = H(\phi_2) = 0$, and a local maximum at the 
unique value $\phi_h$ for which 
\begin{equation*}
F'(\phi_h) = \frac{F(\phi_2) - F(\phi_1)}{\phi_2 - \phi_1}
\quad \textrm{and} \quad F''(\phi_h) < 0.
\end{equation*}
Finally, upon replacing $\phi$ with $\phi+\phi_h$ 
we can shift $H$ so that the local maximum is located at 
$\phi_h = 0$. For the remainder of our analysis we will assume 
that these transformations have been carried out, and we 
will denote the resulting function $F$. The 
standard form that we will use for numerical computations 
and some specific analytical results is 
\begin{equation} \label{quarticF}
F(\phi) = \frac{1}{4} \alpha \phi^4 - \frac{1}{2} \beta \phi^2
+ \frac{1}{4} \frac{\beta^2}{\alpha}
= \frac{\alpha}{4} \left(\phi^2 - \frac{\beta}{\alpha}\right)^2,
\end{equation} 
which clearly satisfies our general assumptions for all 
$\alpha, \beta > 0$.

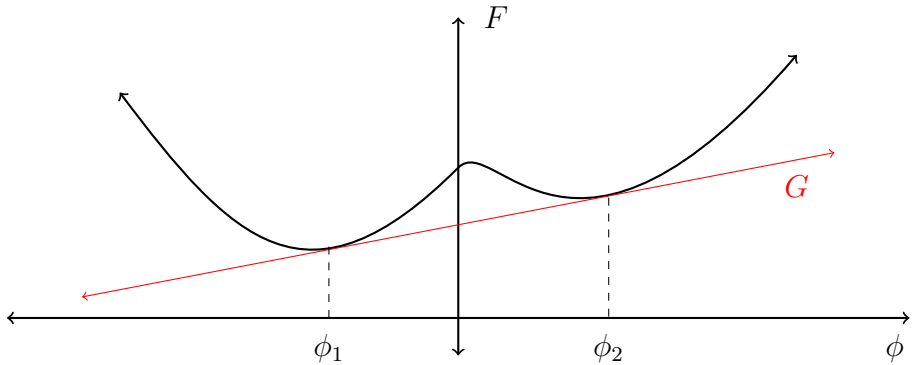
\begin{figure}[ht]
\begin{center}
\begin{tikzpicture}
\draw[thick, <->] (-6,0) -- (6,0);
\node at (5.8,-.4) {$\phi$};
\draw[thick,<->] (0,-.5) -- (0,4);
\node at (.5,4) {$F$};
%
\draw[thick,<-] (-4.5,3) .. controls (-3,1) and (-2,0) .. (0,2);
\draw[thick,->] (0,2) .. controls (.5,2.5) and (1.5,0) .. (4.5,3.5); 
\draw[<->,red] (-5,.28) -- (5,2.2);
\node at (4.5,1.75) {$\color{red} G$};
\draw[dashed] (-1.72,.95) -- (-1.72,0);
\node at (-1.72,-.4) {$\phi_1$};
\draw[dashed] (2,1.6) -- (2,0);
\node at (2,-.4) {$\phi_2$};
\end{tikzpicture}
\end{center}
\caption{The bulk free energy $F$ along with its supporting line.} 
\label{F-figure}
\end{figure}

\begin{remark} \label{F-remark}
For convenient reference, we summarize the properties that 
$F$ will have after the transformations described above:
$F \in C^4 (\mathbb{R})$, and there exist values 
$\phi_1 < \phi_3 < 0 < \phi_4 < \phi_2$ so that 
$F$ is strictly decreasing on $(-\infty, \phi_1)$ and
$(0, \phi_2)$ and strictly increasing on $(\phi_1, 0)$
and $(\phi_2, +\infty)$; $F$ is convex on
$(-\infty, \phi_3)$ and $(\phi_4, +\infty)$, and concave on
$(\phi_3,\phi_4)$; $F$ has local minima at $\phi_1$ and 
$\phi_2$ with $F(\phi_1) = F (\phi_2) = 0$, and $F$ has 
a local maximum at $\phi = 0$. 
\end{remark}

\subsection{The Cahn--Hilliard energy and characteristic length scale}

Equations of form \eqref{ch} are based on the energy
functional 
\begin{equation} \label{ch_energy}
E (\phi (\cdot,t)) = \int_{-L}^{+L} F(\phi) + \frac{\kappa}{2} (\phi_x)^2 dx,
\end{equation}
introduced for this context in \cite{CH58} (see also \cite{GNRW03, KO02}). 
In particular, \eqref{ch} can be expressed as a conservation law, 
\begin{equation*}
\phi_t + J_x = 0,
\end{equation*}
with flux 
\begin{equation*}
J = - M (\phi) \partial_x \frac{\delta E}{\delta \phi},
\end{equation*}
where $\frac{\delta E}{\delta \phi}$ denotes the usual variational 
gradient, and we have restricted the expressions from \cite{CH58} 
to one space dimension. A straightforward calculation shows that if $\phi$ evolves
according to \eqref{ch} then the energy $E (\phi)$ will generally 
dissipate as it approaches a global minimum value, corresponding with a 
stationary solution of \eqref{ch}. (This is clear from the energy balance
calculation carried out below in Section \ref{energy_balance_section} below, 
reduced to the uncoupled setting.) One important aspect of 
such solutions is the rate at which they coarsen from an 
initial near-homogeneous configuration to a distinctive
non-homogeneous configuration corresponding with an asymptotic
limit. In principle, we would like to gauge such coarsening by 
evolving some designated length scale $\ell (t)$, but in practice 
it is problematic to consistently assign such a scale to an arbitrary 
solution $\phi(x,t)$ of \eqref{ch}. Instead, it is often more convenient
to gauge coarsening by evolution of the
energy \eqref{ch_energy}, and one goal of the current analysis 
is to relate the evolution of $E(\phi (\cdot, t))$ to 
the evolution of a length scale $\ell (t)$ in a consistent manner. 

\subsection{Periodic solutions of the Cahn--Hilliard equation}

For many phase separation processes, including the well-studied
process of spinodal decomposition, we expect 
$\phi (x,0) = \phi_0 (x)$ to be a small random perturbation of
a homogeneous state $\phi_h = \text{constant}$.  We can understand 
the initiation of phase separation by linearizing \eqref{ch} about 
this state ($\phi = \phi_h + v$) to obtain the linear perturbation equation
\begin{equation*} 
v_t = - M(\phi_h) \kappa v_{xxxx} + M(\phi_h) F''(\phi_h) v_{xx}.
\end{equation*}
If we look for solutions of the form $v(x,t) = e^{\lambda t + i\xi x}$ 
we obtain the dispersion relation 
\begin{equation*}
\lambda (\xi) = - \kappa M(\phi_h) \xi^4 - M(\phi_h) F''(\phi_h) \xi^2,
\end{equation*}
with leading eigenvalue 
\begin{equation} \label{Leaders}
\lambda_s = \frac{M(\phi_h) F''(\phi_h)^2}{4 \kappa} > 0; \quad
\text{at} \quad \xi_s = \sqrt{-\frac{F''(\phi_h)}{2 \kappa}},
\end{equation}
and associated period
\begin{equation} \label{spinodal-period}
p_s = 2\pi \sqrt{\frac{2\kappa}{- F''(\phi_h)}},    
\end{equation}
where we emphasize that under our assumptions on $F$,
$F''(\phi_h) < 0$.
Accordingly, we expect solutions of \eqref{ch}, initialized
by small random perturbations of $\phi_h$, to rapidly move 
toward a periodic solution with period $p_s$.  Indeed, for 
the case of \eqref{ch} posed on a bounded domain in $\mathbb{R}$,
this expectation has been rigorously verified by Grant 
\cite{Grant91,Grant93}. It is natural to view these initial 
dynamics as culminating once solutions are nearly periodic 
with period $p_s$, and we refer to the dynamics up to this 
time as the {\it spinodal phase} of the process. Correspondingly,
we refer to $p_s$ as the {\it spinodal period}. 

At the end of the spinodal phase, solutions will generally be near a stationary 
periodic solution with period $p_s$, and in order to understand
the next phase of the dynamics, we consider the behavior of 
solutions initialized as perturbations of stationary 
periodic solutions. As a starting point for this, we have 
from \cite{H09, H11} that there exists a continuum 
of such periodic solutions $\bar{\phi} (x)$ to \eqref{ch} in the 
following sense: if $\phi_1$ and $\phi_2$ denote the binodal values
and $\phi_{\min}$ and $\phi_{\max}$ are any values so that 
$\phi_1 < \phi_{\min} < \phi_{\max} < \phi_2$ with additionally
\begin{equation*}
F'(\phi_{\min}) > \frac{F(\phi_{\max}) - F(\phi_{\min})}{\phi_{\max} - \phi_{\min}} > F'(\phi_{\max}),
\end{equation*}  
then there exists a periodic solution to \eqref{ch}
with minimum value $\phi_{\min}$ and maximum value $\phi_{\max}$
(see Theorem 1.5 in \cite{H09}). Moreover (again from Theorem 
1.5 in \cite{H09}), we have that if 
$F$ is as described in Remark \ref{F-remark}, and if 
additionally $F$ is an even function (such as \eqref{quarticF}), 
then for every 
amplitude $a \in (0, \phi_2)$ there exists precisely one
(up to translation) periodic stationary solution with
amplitude $a$. More precisely, this solution, denoted here
$\bar{\phi} (x; a)$,  satisfies the relation
\begin{equation*}
- \kappa \bar{\phi}_{xx} + F'(\bar{\phi}) = 0 
\implies (\bar{\phi}_x)^2 = \frac{2}{\kappa} (F(\bar{\phi}) - F(a)).
\end{equation*}
If we select the shift so that $\bar{\phi} (0; a) = 0$, we find
the integral relation
\begin{equation} \label{integralu}
\int_0^{\bar{\phi} (x;a)} \frac{dy}{\sqrt{\frac{2}{\kappa} (F(y) - F(a))}} = x,
\end{equation}  
from which we see immediately (by symmetry) that $\bar{\phi}(x;a)$ 
has period
\begin{equation} \label{a-to-p}
p(a) = 4 \int_0^{a} \frac{dy}{\sqrt{\frac{2}{\kappa} (F(y) - F(a))}}.
\end{equation}

Returning briefly to the initiation of dynamics, we can use 
\eqref{a-to-p} to compute the minimum possible period, obtained
as the limit of $p(a)$ as $a$ tends to 0. For this calculation, 
we need only observe that for $a$ small and $y \in (0, a)$, 
we can Taylor expand $F(a)$ about $y$, and subsequently Taylor 
expand $F'(y)$ and $F''(y)$ about $0$ (and note
that $F'(0) = 0$) to write 
\begin{equation*}
    F(y) - F(a) = \frac{1}{2} F''(0) (y^2 - a^2) \Big(1 + \mathbf{O} (a)\Big).
\end{equation*}
We then have 
\begin{equation*}
    p (a) = 4 \sqrt{\frac{\kappa}{- F'' (0)}} \int_0^a \frac{1}{\sqrt{a^2 - y^2}} \Big(1 + \mathbf{O} (a) \Big) dy 
    = 2 \pi \sqrt{\frac{\kappa}{- F'' (0)}} + {\mathbf O} (a).
\end{equation*}
We see that the minimum period is 
\begin{equation} \label{minimim-period}
    p_{\min} := \lim_{a \to 0^+} p(a) 
    = 2 \pi \sqrt{\frac{\kappa}{- F'' (0)}}. 
\end{equation}

In the following proposition, we verify that the period $p(a)$
increases as the amplitude $a$ increases. 

\begin{proposition} \label{period-proposition}
Assume $F$ is as described in Remark \ref{F-remark}, and also 
that $F$ is an even function. Then for all $a \in (0, \phi_2)$ 
the period $p(a)$ specified in \eqref{a-to-p} satisfies 
\begin{equation} \label{period-derivative}
p'(a) = \frac{2 \sqrt{2 \kappa}}{\sqrt{F(0)-F(a)}} 
- \sqrt{2 \kappa} \int_0^{a} \frac{F'(y) - F'(a)}{(F(y) - F(a))^{3/2}} dy. 
\end{equation}
In addition, if $F'''(\phi) > 0$ for all $\phi \in (0, \phi_2)$ 
then $p'(a) > 0$ for all $a \in (0, \phi_2)$. 
\end{proposition}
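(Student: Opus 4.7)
The plan is to handle the two parts of the proposition by complementary computations: the stated formula falls out of a rescaling followed by one integration by parts, while positivity is cleanest through a second substitution that exploits the evenness of $F$.

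For the formula, write $p(a) = 2\sqrt{2\kappa}\,I(a)$ with $I(a) := \int_0^a (F(y) - F(a))^{-1/2}\,dy$. The integrand is improper at the moving endpoint but integrable, since $F'(a) < 0$ on $(0,\phi_2)$ forces $F(y) - F(a) \sim -F'(a)(a-y)$ as $y \to a^-$. Rescale $y = a\sigma$ to move the singularity to a fixed endpoint, giving $I(a) = a\int_0^1 (F(a\sigma)-F(a))^{-1/2}\,d\sigma$; a dominated-convergence argument based on the integrable bound $(1-\sigma)^{-1/2}$ then justifies differentiating under the integral. Unwinding the rescaling leads to
$$I'(a) = \frac{I(a)}{a} - \frac{1}{2a}\int_0^a \frac{yF'(y) - aF'(a)}{(F(y)-F(a))^{3/2}}\,dy.$$
Decompose $yF'(y) - aF'(a) = (y-a)F'(y) + a(F'(y)-F'(a))$ and integrate the first piece by parts via $F'(y)(F(y)-F(a))^{-3/2} = -2\partial_y(F(y)-F(a))^{-1/2}$. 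The $y = a$ boundary vanishes since $(y-a)/\sqrt{F(y)-F(a)} = O(\sqrt{a-y})$, the $y = 0$ boundary produces the $1/\sqrt{F(0)-F(a)}$ contribution, and the Leibniz remainder reproduces $2I(a)$, exactly cancelling the $I(a)/a$ term. Multiplying through by $2\sqrt{2\kappa}$ yields the advertised expression for $p'(a)$.

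For positivity under $F''' > 0$, I bypass the stated formula --- since $F'(y) - F'(a)$ changes sign on $(0,a)$ whenever $F''$ changes sign, the two terms in $p'(a)$ are not obviously comparable --- and instead exploit the evenness of $F$ to factor $F(y) - F(a) = (a^2 - y^2)G(y,a)$, where $G(y,a) := (F(y)-F(a))/(a^2-y^2)$ extends smoothly and strictly positively to $\{|y| \le a : a \in (0,\phi_2)\}$. The substitution $y = a\cos\theta$ then converts $I(a)$ to a proper integral,
$$I(a) = \int_0^{\pi/2} \frac{d\theta}{\sqrt{G(a\cos\theta,a)}},$$
and differentiating under this integral gives
$$I'(a) = -\frac{1}{2}\int_0^{\pi/2} \frac{G_y(a\cos\theta,a)\cos\theta + G_a(a\cos\theta,a)}{G(a\cos\theta,a)^{3/2}}\,d\theta.$$
A direct algebraic manipulation --- expanding $G_y$ and $G_a$ from the quotient rule, clearing the common denominator $(a^2-y^2)^2$, and collecting --- produces the identity
$$\bigl(G_y(y,a)\cos\theta + G_a(y,a)\bigr)\Big|_{y = a\cos\theta} = \frac{\Phi(y) - \Phi(a)}{a^3 \sin^2\theta}, \qquad \Phi(y) := yF'(y) - 2F(y).$$
Positivity of $I'(a)$ therefore reduces to strict monotonicity of $\Phi$ on $(0,\phi_2)$. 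Compute $\Phi'(y) = yF''(y) - F'(y)$, so $\Phi'(0) = 0$, and $\Phi''(y) = yF'''(y)$, which is strictly positive on $(0,\phi_2)$ by hypothesis. Hence $\Phi'$ is strictly increasing from $0$, so $\Phi' > 0$, so $\Phi$ is strictly increasing on $(0,\phi_2)$, so $\Phi(a\cos\theta) < \Phi(a)$ for $\theta \in (0,\pi/2)$, giving $I'(a) > 0$ and consequently $p'(a) > 0$.

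The main obstacle is the realization that the formula and the positivity claim want to be proved by different substitutions: once one uses $y = a\sigma$ for the formula (fixing the endpoint so that differentiation under the integral makes sense) and $y = a\cos\theta$ for positivity (using evenness to eliminate the improper endpoint), both halves are comparatively routine, and positivity reduces to a one-line calculus fact about the elementary auxiliary function $\Phi(y) = yF'(y) - 2F(y)$.
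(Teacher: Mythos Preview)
Your proof is correct and follows essentially the same approach as the paper. Both arguments rescale $y = a\sigma$ to differentiate under the integral and then integrate by parts to obtain the formula; for positivity, your auxiliary function $\Phi(y) = yF'(y) - 2F(y)$ is exactly $-2G(y)$ for the paper's $G(y) = F(y) - \tfrac{y}{2}F'(y)$, and your monotonicity argument ($\Phi'(0)=0$, $\Phi''(y)=yF'''(y)>0$) is identical to the paper's. The only real difference is that for positivity the paper stays with the $z=y/a$ substitution and reads off the sign of the numerator $G(az)-G(a)$ directly from the (still improper) integral, whereas you introduce the trigonometric substitution $y=a\cos\theta$ to regularize the integral first---this costs you the extra algebra of computing $G_y\cos\theta + G_a$ but has the minor advantage of making the integral manifestly proper before differentiating.
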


\begin{proof}
In order to abbreviate notation, we will write 
\begin{equation*}
    p(a) = 2 \sqrt{2 \kappa} \tilde{p} (a),
    \quad \tilde{p} (a) = \int_0^a \frac{dy}{\sqrt{F(y) - F(a)}}. 
\end{equation*}
By setting $z = y/a$, we can express $\tilde{p} (a)$ as 
\begin{equation*}
    \tilde{p} (a) = \int_0^1 \frac{a}{\sqrt{F(az) - F(a)}} dz,
\end{equation*}
for which we can justify differentiating through the integral. This
gets us to the relation 
\begin{equation} \label{p-tilde-equation}
    \tilde{p}'(a)
    = \int_0^1 \frac{(F(az) - \frac{az}{2} F' (az)) - (F(a) - \frac{a}{2} F' (a))}{(F(az) - F(a))^{3/2}} dz.
\end{equation}
In order to establish positivity of $\tilde{p}'(a)$, we will set 
\begin{equation*}
G(y) := F(y) - \frac{y}{2} F' (y),
\end{equation*}
and show that $G(az) - G(a)$ is positive for all $z \in (0, 1)$.  
First, $G' (y) = \frac{1}{2} F'(y) - \frac{y}{2}F''(y)$, so in 
particular $G' (0) = \frac{1}{2} F'(0) = 0$. 
Next, by assumption, $G''(y) = - \frac{y}{2} F'''(y) < 0$
for all $y \in (0, a)$, so $G'(y)$ is a decreasing function,
and we must have $G' (y) < 0$ for all $y \in (0, a)$. It 
follows that $G (y)$ is a decreasing function, so $G (a) < G (az)$
for all $z \in (0, 1)$, giving the second claim.

For the first claim, it is convenient to return to $y = az$ and 
rearrange \eqref{p-tilde-equation} as 
\begin{equation} \label{p-tilde-equation2}
    \tilde{p}'(a)
    = \int_0^a \frac{\frac{1}{2} (F' (a) - F' (y)) + \frac{1}{a} (F(y) - F(a))}{(F(y) - F(a))^{3/2}} dy
    + \frac{1}{2} \lim_{\tau \to a^-} \int_0^{\tau} \frac{F'(y) - \frac{y}{a} F'(y)}{(F(y) - F(a))^{3/2}} dy,
\end{equation}
where the final integral has been expressed as a limit to justify 
integrating by parts. Integrating by parts, we find 
\begin{equation*}
    \begin{aligned}
\lim_{\tau \to a^-} \int_0^{\tau} \frac{F'(y) - \frac{y}{a} F'(y)}{(F(y) - F(a))^{3/2}} dy
&= - \int_0^a \frac{\frac{2}{a} (F(y) - F(a))}{(F(y) - F(a))^{3/2}} dy
+ \lim_{\tau \to a^-} \frac{-2 (1 - \frac{y}{a})}{\sqrt{F (y) - F(a)}} \Big|_0^\tau \\
&= - \int_0^a \frac{\frac{2}{a} (F(y) - F(a))}{(F(y) - F(a))^{3/2}} dy
+ \frac{2}{\sqrt{F (0) - F(a)}}. 
    \end{aligned}
\end{equation*}
Recalling the factor of $1/2$, we see that the first summand on the right-hand side of 
this last expression cancels with the second part of the first integral on the right-hand side of 
\eqref{p-tilde-equation2}, leaving 
\begin{equation*}
    \tilde{p}' (a)
    = \frac{1}{\sqrt{F (0) - F(a)}} 
    + \int_0^a \frac{\frac{1}{2} (F' (a) - F' (y))}{(F(y) - F(a))^{3/2}} dy. 
\end{equation*}
Recalling the specification $p(a) = 2 \sqrt{2 \kappa} \tilde{p} (a)$, we see 
that the proof is complete. 
\end{proof}

Since $p$ is monotonically increasing as a function of amplitude, we can 
uniquely specify the spinodal amplitude $a_s$ so that 
$p(a_s) = p_s$. Precisely, we obtain the relation 
\begin{equation*}
\frac{2 \pi \sqrt{2 \kappa}}{\sqrt{- F''(0)}}
= 4 \sqrt{\kappa} \int_0^{a_s} \frac{dy}{\sqrt{2 (F(y) - F(a_s))}},
\end{equation*}
and since $\sqrt{\kappa}$ can be divided out of both sides, 
we see that $a_s$ does not depend on $\kappa$.

Using \eqref{integralu}, we can identify the unique (up to shift, 
selected by $\bar{\phi} (0;a_s) = 0$)
periodic solution of \eqref{ch} with amplitude $a_s$, namely
$\bar{\phi} (x; a_s)$, and subsequently we define the spinodal 
energy $E_s$ as the energy associated with this periodic solution,
\begin{equation} \label{spinodal-energy}
    E_s := E(\bar{\phi} (\cdot; a_s)).
\end{equation} 
For the long-time models discussed below, we will typically 
think of initiating the dynamics once the energy has 
reduced to the spinodal value. 

Although the energy function $E (\phi (\cdot, t))$ can 
achieve any value attainable via functions $\phi (x, t)$
in its domain, the dynamics we have in mind involve 
solutions with energies less than the energy achieved 
by the homogeneous configuration $\phi_{0} \equiv 0$.
For a given interval $[-L, +L]$, this is easily computed 
to be 
\begin{equation} \label{maximum-energy}
    E_{\max} := E (0) = \int_{-L}^{+L} F(0) dx
    = 2LF(0). 
\end{equation}
As time increases, energies will decrease as solutions 
approach either a kink or anti-kink solution, respectively
$K(x)$ or $K(-x)$, where 
\begin{equation*}
    - \kappa K'' + F' (K) = 0,
    \quad \forall\, x \in \mathbb{R},
\end{equation*}
with also 
\begin{equation*}
    \lim_{x \to -\infty} K(x) = \phi_1,
    \quad    \lim_{x \to +\infty} K(x) = \phi_2.
\end{equation*}
(See \cite{H09} for existence of such solutions, and 
\cite{H07} for asymptotic stability; we note that according 
to convention (see, e.g., \cite{EB1996}), 
a {\it kink} solution is monotonically 
increasing, while an {\it anti-kink} solution is 
monotonically decreasing.) These solutions
provide us with a lower bound on the 
energy 
\begin{equation}
    E_{\min} := \int_{-L}^{+L}
    F(K (x)) + \frac{\kappa}{2} |K'(x)|^2 dx,
\end{equation}
which will be computed explicitly in Proposition 
\ref{specific-F-proposition} just below. 


For specific implementations of our approach, we will use 
the family of bulk free energy densities specified in 
\eqref{quarticF}. Many of the preceding relations can 
be made explicit in this case, and for convenient reference
we summarize these in the following proposition. 

\begin{proposition} \label{specific-F-proposition}
    For \eqref{ch}, let $M$ satisfy the assumptions in 
    {\bf (A)}, and let $F$ be as in \eqref{quarticF}.
    Then the following hold:

    \medskip
    \noindent
    (i) {\bf Minimum period}. The minimum period computed in \eqref{minimim-period} 
    is $p_{\min} = 2 \pi \sqrt{\kappa/\beta}$, and the 
    associated energy computed in \eqref{maximum-energy}
    is $E_{\max} = L\beta^2/(2 \alpha)$. 

    \medskip
    \noindent
    (ii) {\bf Spinodal period}. The spinodal period computed 
    in \eqref{spinodal-period} is $p_s = 2 \pi \sqrt{2 \kappa/\beta}$.

    \medskip
    \noindent
    (iii) {\bf Periodic solutions}. For each $a \in (0, \sqrt{\beta/\alpha})$,
    the periodic solution $\bar{\phi} (x; a)$ specified in 
    \eqref{integralu} can be expressed as a Jacobi elliptic function 
    \begin{equation} \label{jacobiellipticu}
    \bar{\phi} (x; a) = a \operatorname{sn} \left(\sqrt{\frac{-2(F(a) - F(0))}{\kappa}} \frac{x}{a}, k\right),
    \end{equation}
    where 
    \begin{equation} \label{kdefined}
    k = \sqrt{- \frac{\alpha a^4}{4 (F(a) - F(0))}}.
    \end{equation}
  
    \medskip
    \noindent
    (iv) {\bf Kink solutions and the minimum energy}. The binodal values for $F$ are $\pm \sqrt{\beta/\alpha}$, 
    and there exists a unique (up to translation) kink solution
    of \eqref{ch}, 
    \begin{equation} \label{kink-solution}
        K(x) = \sqrt{\frac{\beta}{\alpha}} \tanh \Big(\sqrt{\frac{\beta}{2 \kappa}} x \Big).
    \end{equation}
    The energy associated with this kink solution on $[-L, +L]$ 
    is 
    \begin{equation}\label{E_kink}
        E_{\min} := E(K(x)) = \int_{-L}^{+L} F (K (x)) + \frac{\kappa}{2} K'(x)^2 dx
        = \sqrt{2 \kappa \alpha} K (L) \Big(\frac{\beta}{\alpha} - \frac{K (L)^2}{3} \Big),   
    \end{equation}
    and it follows that 
    \begin{equation*}
     E_{\min}^{\infty} := \lim_{L \to \infty} E_{\min}
     = \frac{2}{3} \frac{\beta^2}{\alpha} \sqrt{\frac{2 \kappa}{\beta}}.   
    \end{equation*}
\end{proposition}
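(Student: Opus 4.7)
Each of the four claims amounts to a direct computation, inserting the specific form $F(\phi)=\frac{\alpha}{4}(\phi^2-\beta/\alpha)^2$ into formulas already derived in the section, so the plan is essentially a sequence of bookkeeping exercises. The only genuinely non-mechanical piece is the reduction of the quadrature in (iii) to Jacobi elliptic form, so I will budget most of my attention there.

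For (i) and (ii), I will simply compute $F''(\phi)=3\alpha\phi^2-\beta$, so that $F''(0)=-\beta$, and substitute into \eqref{minimim-period} and \eqref{spinodal-period} to read off $p_{\min}=2\pi\sqrt{\kappa/\beta}$ and $p_s=2\pi\sqrt{2\kappa/\beta}$. For $E_{\max}$, I just evaluate $F(0)=\beta^2/(4\alpha)$ and use \eqref{maximum-energy}. These three identities are one-line verifications.

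For (iii), I will start from the integral relation \eqref{integralu} with this choice of $F$. Using the factorization
\begin{equation*}
F(y)-F(a)=\tfrac{1}{4}(y^2-a^2)\bigl(\alpha(y^2+a^2)-2\beta\bigr),
\end{equation*}
the denominator $\sqrt{\frac{2}{\kappa}(F(y)-F(a))}$ becomes a square root of a product of two quadratics in $y^2$ that are strictly negative for $|y|<a<\sqrt{\beta/\alpha}$, i.e., the classical elliptic form. I will then substitute $y=a\,w$, so that $F(aw)-F(a)$ inherits a factor of $(1-w^2)$, and the remaining factor can be rearranged as $1-k^2 w^2$ with $k$ chosen precisely so that the coefficient matches $-\alpha a^4/\bigl(4(F(a)-F(0))\bigr)$. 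This brings the integral into the canonical Legendre form $\int_0^{\bar\phi/a}\frac{dw}{\sqrt{(1-w^2)(1-k^2w^2)}}$, whose inverse is $\operatorname{sn}$; reading off the modulus and the scaling of $x$ gives \eqref{jacobiellipticu}--\eqref{kdefined}. The main place to be careful is tracking the signs inside the square root (and ensuring $k\in(0,1)$), which is where I expect to spend the most effort.

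For (iv), first I will observe that the binodal equation $F'(\phi_1)=F'(\phi_2)$ with $F$ even forces $\phi_{1,2}=\mp\sqrt{\beta/\alpha}$. For the kink I use the standard first-integral trick: multiplying $-\kappa K''+F'(K)=0$ by $K'$, integrating, and using $K'\to 0$, $F(K)\to 0$ at $x=\pm\infty$ gives the equipartition identity $\frac{\kappa}{2}(K')^2=F(K)$. Taking the positive square root, $K'=\sqrt{\alpha/(2\kappa)}\,(\beta/\alpha-K^2)$, separates and integrates to \eqref{kink-solution}. The energy formula then follows from equipartition, which collapses $E_{\min}$ to $\int_{-L}^{+L}\kappa(K')^2\,dx$; substituting $u=K(x)$ converts this to
\begin{equation*}
\int_{-K(L)}^{K(L)}\sqrt{2\kappa\, F(u)}\,du
=\sqrt{\kappa\alpha/2}\int_{-K(L)}^{K(L)}\!\bigl(\beta/\alpha-u^2\bigr)du,
\end{equation*}
which integrates in closed form to $\sqrt{2\kappa\alpha}\,K(L)\bigl(\beta/\alpha-K(L)^2/3\bigr)$. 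Finally, letting $L\to\infty$ so that $K(L)\to\sqrt{\beta/\alpha}$ yields $E_{\min}^{\infty}=\frac{2}{3}(\beta^2/\alpha)\sqrt{2\kappa/\beta}$. No serious obstacle should arise here beyond being consistent with the choice of sign for $K'$ and handling the square root of the double-well potential on the interval between the binodal values.
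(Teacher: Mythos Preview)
Your proposal is correct and follows essentially the same route as the paper: direct evaluation for (i)--(ii), factoring $F(y)-F(a)$ into two quadratics and rescaling to the canonical elliptic integral for (iii), and the equipartition identity $\frac{\kappa}{2}(K')^2=F(K)$ with the change of variables $u=K(x)$ for the energy in (iv). The only cosmetic differences are that the paper uses the trigonometric substitution $y=a\sin\theta$ (the $\theta$-form of $\operatorname{sn}$) rather than your $y=aw$ (the Legendre $w$-form), and it simply verifies \eqref{kink-solution} by inspection rather than deriving it via separation of variables.
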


\begin{proof}
    Items (i) and (ii) are clear from evaluation of the indicated formulas at 
    the specific family of bulk free energy densities \eqref{quarticF}. For Item (iii),
    we first observe that for $F$ as specified in \eqref{quarticF}, we can 
    write 
    \begin{equation*}
        F(y) - F(a)
        = \left(\frac{\alpha}{4} y^2 + \frac{F(a) - F(0)}{a^2}\right) (y^2 - a^2).
    \end{equation*}
    In this case, \eqref{integralu} becomes 
    \begin{equation*} 
    \int_0^{\bar{\phi} (x;a)} 
    \frac{dy}{\sqrt{\frac{2}{\kappa} (\frac{\alpha}{4} y^2 + \frac{F(a) - F(0)}{a^2}) (y^2 - a^2)}} = x.
    \end{equation*}  
    In order to express this as a Jacobi elliptic function, we first multiply both 
    sides by the factor $(1/a) \sqrt{- 2 (F(a) - F(0))/\kappa}$ to obtain 
    \begin{equation*} 
    \int_0^{\bar{\phi} (x;a)} 
    \frac{\frac{1}{a} \sqrt{\frac{-2 (F(a) - F(0))}{\kappa}}}
    {\sqrt{\frac{2}{\kappa} (\frac{\alpha}{4} y^2 + \frac{F(a) - F(0)}{a^2}) (y^2 - a^2)}} dy 
    = \frac{x}{a} \sqrt{\frac{-2 (F(a) - F(0))}{\kappa}},
    \end{equation*}  
    which we can rearrange to 
    \begin{equation*} 
    \int_0^{\bar{\phi} (x;a)} 
    \frac{dy}{\sqrt{(a^2 - y^2) (1 + \frac{\alpha a^2}{4(F(a) - F(0))}y^2)}}
    = \frac{x}{a} \sqrt{\frac{-2 (F(a) - F(0))}{\kappa}}.
    \end{equation*}  

    If we now set $y = a \sin \theta$ and specify $k$ as 
    in \eqref{kdefined}, we obtain the relation 
    \begin{equation*}
        \int_0^{\bar{\theta}(x;a)} \frac{d \theta}{\sqrt{1 - k^2 \sin^2 \theta}}
        = \frac{x}{a} \sqrt{\frac{- 2 (F(a) - F(0))}{\kappa}},
    \end{equation*}
    where $\sin \bar{\theta}  = \bar{\phi}/a$. By definition of the Jacobi 
    elliptic function $\operatorname{sn}$ (see Remark \ref{periodic-solution-details}
    below), 
    \begin{equation*}
    \operatorname{sn} \left(\sqrt{\frac{- 2 (F(a) - F(0))}{\kappa}} \frac{x}{a}, k\right)
    = \sin \bar{\theta} (x; a) = \frac{\bar{\phi} (x; a)}{a},
    \end{equation*}
    from which Item (iii) follows immediately.  

    Turning to Item (iv), it is straightforward to verify directly 
    that \eqref{kink-solution} is indeed a stationary solution 
    to \eqref{ch} with $F$ as in \eqref{quarticF}. For the 
    energy claim, we need to evaluate $E$ in \eqref{ch_energy} 
    at $K(x)$. For this, we first observe that $K (x)$ satisfies
    the equation 
    \begin{equation*}
        K' = \sqrt{\frac{2}{\kappa} F (K(x))},
    \end{equation*}
    so that 
    \begin{equation*}
        E(K(x)) = \int_{-L}^{+L} F (K (x)) + \frac{\kappa}{2} \cdot \frac{2}{\kappa} F (K (x)) dx
        = 2 \int_{-L}^{+L} F(K(x)) dx. 
    \end{equation*}
    We make the change of variables $y = K(x)$, so that 
    \begin{equation*}
        dy = K'(x) dx = \sqrt{\frac{2}{\kappa} F(K(x))} dx,
    \end{equation*}
    giving 
    \begin{equation*}
    \begin{aligned}
        E(K(x))
        &= \sqrt{2 \kappa} \int_{K (-L)}^{K (+L)} \sqrt{F(y)} dy
        = \sqrt{\frac{\kappa \alpha}{2}} \int_{K (-L)}^{K (+L)} \frac{\beta}{\alpha} - y^2 dy \\
        &= \sqrt{2 \kappa \alpha} K (L) \Big(\frac{\beta}{\alpha} - \frac{K (L)^2}{3} \Big),
    \end{aligned}
    \end{equation*}
    where in obtaining the final equality, we integrated directly 
    and observed the relation $K (-L) = -K(L)$. Finally, the 
    statement about $E_{\min}^{\infty}$ is clear from the limit 
    \begin{equation*}
        \lim_{L \to \infty} K(L) = \sqrt{\frac{\beta}{\alpha}}. 
    \end{equation*}
\end{proof}

\begin{remark} \label{periodic-solution-details}
For the periodic solution \eqref{jacobiellipticu}, 
$\text{sn} (y;k)$ denotes the Jacobi elliptic function, 
defined so that 
\begin{equation*}
\text{sn} (y;k) = \sin \theta; \quad \text{where} \quad
y = \int_0^{\theta} \frac{d \zeta}{\sqrt{1 - k^2 \sin^2 \zeta}}.
\end{equation*}
For implementations, we evaluate $\operatorname{sn} (y; k)$ with 
the MATLAB function {\it ellipj.m}.
The period in this case is 
\begin{equation} \label{jacobiperiod}
p (a) = \frac{4 a \sqrt{\kappa}}{\sqrt{-2(F(a) - F(0))}} \mathcal{K} (k),
\end{equation}
where $\mathcal{K}$ denotes the complete elliptic integral
\begin{equation*}
\mathcal{K}(k) = \int_0^1 \frac{ds}{\sqrt{(1-s^2)(1-k^2 s^2)}}.
\end{equation*}
For implementations, we evaluate $\mathcal{K} (k)$ with 
the MATLAB function {\it ellipke.m}. 
\end{remark}


\begin{remark}
    \textcolor{black}{
    We observe for consistency that the expression in (iii) for $\bar{\phi} (x; a)$, $a\in(0,\sqrt{\frac{\beta}{\alpha}})$, 
    matches the results when $a\to 0$ and $a\to \sqrt{\frac{\beta}{\alpha}}$. To see this, we first take $a\to 0$ and observe that
    \[
      \lim_{a\to0}\sqrt{\frac{-2(F(a)-F(0))}{a^2\kappa}} x = \sqrt{\frac\beta\kappa} x \quad \text{ and } \quad \lim_{a\to0} k = \lim_{a\to0} \sqrt{\frac{-\alpha a^4}{4(F(a)-F(0))}} = 0.
    \]
    As $\text{sn}(u;0) = \sin u$, one can conclude that 
    \[
     \lim_{a\to0} \bar\phi(x;a) = \lim_{a\to0} a \sin (\sqrt{\frac\beta\kappa} x) =0.
    \]
    Next, taking $a\to \sqrt{\frac{\beta}{\alpha}}$ we find that
    \[
      \lim_{a\to \sqrt{\frac{\beta}{\alpha}}}\sqrt{\frac{-2(F(a)-F(0))}{a^2\kappa}} x = \sqrt{\frac{\beta}{2\kappa}} x \quad \text{and} \quad \lim_{a\to \sqrt{\frac{\beta}{\alpha}}} k = \lim_{a\to \sqrt{\frac{\beta}{\alpha}}} \sqrt{\frac{-\alpha a^4}{4(F(a)-F(0))}} = 1.
    \]
    As $\text{sn}(u;1) = \tanh u$, we obtain
    \[
      \lim_{a\to \sqrt{\frac{\beta}{\alpha}}} \bar\phi(x;a) = \sqrt{\frac\beta\alpha} \text{sn}(\sqrt{\frac{\beta}{2\kappa}} x,1) = \sqrt{\frac\beta\alpha} \tanh(\sqrt{\frac{\beta}{2\kappa}} x) = K(x).
    \]
    which gives the kink solution.
    }
\end{remark}


As a working example for quantitative analysis, we will take 
$M (\phi) \equiv 1$ and use $F$ as in \eqref{quarticF} with 
$\alpha = \beta = 1$, leaving only $\kappa$ to vary. As a 
baseline case, we will take $\kappa = 0.001$. Using these values,  
periodic solutions
with three different periods are depicted in Figure 
\ref{pwaves-figure}. 

\begin{figure}[ht] 
\begin{center}
\includegraphics[width=12cm,height=8.2cm]{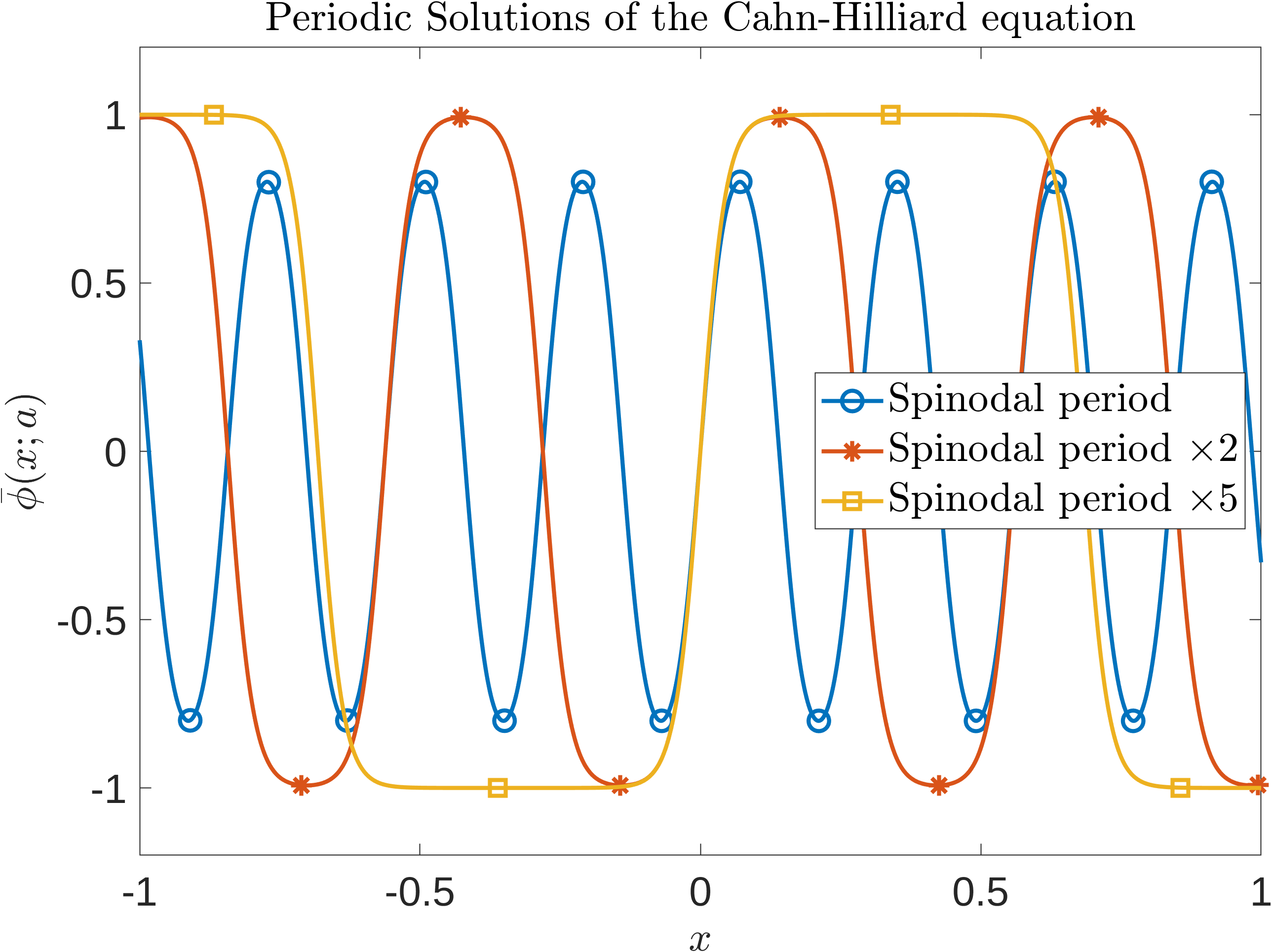}
\end{center}
\caption{Periodic solutions for the Cahn--Hilliard equation. \label{pwaves-figure}}
\end{figure}

\subsection{A Measure of Coarsening} 
\label{MeasureofCoursening}

We return now to our goal set out previously of relating 
each energy $e \in (E_{\min}, E_{\max})$ with an associated 
length scale $\ell$. Since solutions to \eqref{ch} tend to 
be near periodic solutions, our strategy will be to identify 
with a given energy $e$ the period $p$ for which 
\begin{equation*}
    E (\bar{\phi} (\cdot; a (p))) = e. 
\end{equation*}
One caveat for this approach is that the function 
\begin{equation} \label{p-to-e}
    \mathcal{E} (p) 
    := E (\bar{\phi} (\cdot; a (p)))
    = \int_{-L}^{+L} F (\bar{\phi} (x; a(p))) 
    + \frac{\kappa}{2} \bar{\phi}_x (x; a(p))^2 dx.
\end{equation}
is not invertible, a point we clarify in the next proposition. 

\begin{proposition} \label{EofPNotInvertible}
For \eqref{ch}, let $M$ satisfy the assumptions in 
{\bf (A)}, and let $F$ be as in \eqref{quarticF}.
Then the function $\mathcal{E} (p)$
defined in \eqref{p-to-e} is not invertible on $[p_{\min}, \infty)$. 
\end{proposition}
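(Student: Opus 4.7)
Proof plan. The idea is to reparameterize by amplitude and exhibit an interior strict local minimum of the reparameterized energy. By Proposition~\ref{period-proposition}, the map $a\mapsto p(a)$ is a smooth strictly increasing bijection of $[0,\sqrt{\beta/\alpha})$ onto $[p_{\min},\infty)$, so invertibility of $\mathcal{E}(p)$ is equivalent to strict monotonicity of $\tilde{\mathcal{E}}(a):=\mathcal{E}(p(a))$ on $[0,\sqrt{\beta/\alpha})$. The boundary values are transparent: $\tilde{\mathcal{E}}(0)=E(0)=2LF(0)=E_{\max}$ (from $\bar{\phi}(\cdot;0)\equiv 0$), and using that $\bar{\phi}(x;a)\to K(x)$ uniformly on $[-L,L]$ as $a\to(\sqrt{\beta/\alpha})^-$ (a direct consequence of $k(a)\to 1$ in the Jacobi elliptic representation of Proposition~\ref{specific-F-proposition}), $\tilde{\mathcal{E}}(a)\to E_{\min}$.

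The central computation is a clean formula for $\tilde{\mathcal{E}}'(a)$. Differentiating under the integral and integrating the $\kappa\bar{\phi}_x\bar{\phi}_{xa}$ term by parts produces a bulk integrand $(-\kappa\bar{\phi}_{xx}+F'(\bar{\phi}))\bar{\phi}_a$, which vanishes pointwise because $\bar{\phi}(\cdot;a)$ satisfies the stationary ODE $-\kappa\bar{\phi}_{xx}+F'(\bar{\phi})=0$ underlying \eqref{integralu}. The remaining boundary term simplifies via the oddness of $\bar{\phi}(\cdot;a)$ in $x$ (since $F$ is even), so that $\bar{\phi}_x$ is even and $\bar{\phi}_a$ is odd in $x$, yielding
\begin{equation*}
\tilde{\mathcal{E}}'(a)\;=\;2\kappa\,\bar{\phi}_x(L;a)\,\bar{\phi}_a(L;a).
\end{equation*}

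Provided $4L>p_{\min}$, strict monotonicity of $p(a)$ supplies a unique $a^{(1)}\in(0,\sqrt{\beta/\alpha})$ with $p(a^{(1)})=4L$, at which $L$ coincides with the first quarter-period of $\bar{\phi}(\cdot;a^{(1)})$. Hence $\bar{\phi}(L;a^{(1)})=a^{(1)}$ and $\bar{\phi}_x(L;a^{(1)})=0$, giving $\tilde{\mathcal{E}}'(a^{(1)})=0$. To upgrade this to a strict local minimum, I would Taylor-expand $\bar{\phi}(x;a)$ to second order in $x$ about the apex $L(a):=p(a)/4$, using that $\bar{\phi}_{xx}(L(a);a)=F'(a)/\kappa<0$ (from the stationary equation and the fact that $F'(a)<0$ on $(0,\sqrt{\beta/\alpha})$ for the quartic). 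This yields $\bar{\phi}_a(L;a^{(1)})=1$ and $\bar{\phi}_{xa}(L;a^{(1)})=-F'(a^{(1)})p'(a^{(1)})/(4\kappa)>0$, from which $\tilde{\mathcal{E}}''(a^{(1)})=2\kappa\,\bar{\phi}_{xa}(L;a^{(1)})\bar{\phi}_a(L;a^{(1)})>0$. Thus $\tilde{\mathcal{E}}$ has a strict interior local minimum at $a^{(1)}$, which is incompatible with strict monotonicity and establishes non-injectivity of $\mathcal{E}(p)$. The main technical point is the smooth $(x,a)$-dependence needed for the apex Taylor expansion, which is immediate from the explicit Jacobi elliptic formula in Proposition~\ref{specific-F-proposition} together with the smoothness of $p(a)$ and $k(a)$ on the open amplitude interval.
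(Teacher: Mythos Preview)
Your argument is correct and closely parallels the paper's. Both reparameterize by amplitude and derive the identical boundary formula $\tilde{\mathcal{E}}'(a)=2\kappa\,\bar{\phi}_x(L;a)\,\bar{\phi}_a(L;a)$ via integration by parts, the stationary ODE, and oddness of $\bar{\phi}(\cdot;a)$. From there the paper expresses $\bar{\phi}_a$ explicitly through the Jacobi elliptic representation \eqref{jacobiellipticu} and tracks the sign of $\tilde{\mathcal{E}}'(a)$ as $a$ sweeps through the quarter-period configuration $\bar{\phi}(L;a)=a$, showing it changes from negative to positive. You instead pin down the same critical amplitude $a^{(1)}$ with $p(a^{(1)})=4L$ and run a second-derivative test, obtaining $\bar{\phi}_a(L;a^{(1)})=1$ and $\bar{\phi}_{xa}(L;a^{(1)})=-F'(a^{(1)})p'(a^{(1)})/(4\kappa)>0$ by implicitly differentiating the apex identities $\bar{\phi}(L(a);a)=a$ and $\bar{\phi}_x(L(a);a)=0$. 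Your route sidesteps the explicit elliptic-function manipulation and is a bit cleaner for this proposition in isolation; the paper's explicit formula for $\bar{\phi}_a$, on the other hand, is reused immediately in Proposition~\ref{EofPPlateaus} to bound $\mathcal{E}'(p)$ on the plateaus. Both arguments tacitly require that the quarter-period configuration be attainable (your condition $4L>p_{\min}$), which is the natural regime for the coarsening analysis and holds throughout the paper's examples.
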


\begin{proof}
    First, since the period $p$ depends monotonically on the 
    amplitude $a$, we can work with the map 
\begin{equation} \label{energy-period}
    \mathbb{E} (a) := E (\bar{\phi} (\cdot; a))
    = \int_{-L}^{+L} F (\bar{\phi} (x; a)) + \frac{\kappa}{2} \bar{\phi}_x (x; a)^2 dx.
\end{equation}
We will prove the proposition by showing that $\mathbb{E}' (a)$ does not 
have a fixed sign. Upon differentiating $\mathbb{E}$ in $a$, we 
obtain 
\begin{equation} \label{energy-period-derivative}
\begin{aligned}
    \frac{d\mathbb{E}}{da} (a) 
    &= \int_{-L}^{+L} F' (\bar{\phi} (x; a)) \bar{\phi}_a (x; a) + \kappa \bar{\phi}_x (x; a) \bar{\phi}_{x a} (x; a) dx \\
    &\overset{\textrm{parts}}{=} \int_{-L}^{+L} F' (\bar{\phi} (x; a)) \bar{\phi}_a (x; a) - \kappa \bar{\phi}_{xx} (x; a) \bar{\phi}_{a} (x; a) dx \\
    &\quad \quad + \kappa \bar{\phi}_x (L; a) \bar{\phi}_{a} (L; a) - \kappa \bar{\phi}_x (-L; a) \bar{\phi}_{a} (-L; a) \\ 
    &= \kappa \bar{\phi}_x (L; a) \bar{\phi}_{a} (L; a) - \kappa \bar{\phi}_x (-L; a) \bar{\phi}_{a} (-L; a),
\end{aligned}    
\end{equation}
where the final equality follows because $- \kappa \bar{\phi}_{xx} (x; a) + F' (\bar{\phi} (x; a)) = 0$ for a periodic 
wave $\bar{\phi} (x; a)$. 

In order to understand the derivative $\bar{\phi}_a (x; a)$, we recall the explicit form for 
$\bar{\phi} (x; a)$, 
\begin{equation} \label{periodic-wave}
    \bar{\phi} (x; a) = a \sn (h(a)x; k), 
    \quad h(a) = \frac{1}{a} \sqrt{\frac{-2 (F(a) - F(0))}{\kappa}}.
\end{equation}
With $F$ as specified in \eqref{quarticF}, we find that 
\begin{equation*}
    F(a) - F(0) = a^2 (\frac{\alpha}{4} a^2 - \frac{\beta}{2}),
\end{equation*}
so 
\begin{equation*}
    h(a) = \sqrt{\frac{\alpha}{2\kappa}} \sqrt{\frac{2\beta}{\alpha} - a^2},
    \quad h'(a) = -\frac{\sqrt{\frac{\alpha}{2\kappa}}a}{\sqrt{\frac{2\beta}{\alpha} - a^2}}.
\end{equation*}
Now, 
\begin{equation*}
    \bar{\phi}_a (x; a) = \sn (h(a)x; k) + a \sn' (h(a)x; k) h'(a) x,
\end{equation*}
and likewise 
\begin{equation*}
    \bar{\phi}_x (x; a) = a \sn' (h(a)x; k) h(a),
\end{equation*}
from which we can write 
\begin{equation*}
    \bar{\phi}_a (x; a) = \frac{1}{a} \bar{\phi} (x; a) 
    + \frac{h'(a)x}{h(a)} \bar{\phi}_x (x; a)
    = \frac{1}{a} \bar{\phi} (x; a) - \frac{ax}{\frac{2\beta}{\alpha} - a^2} \bar{\phi}_x (x;a).
\end{equation*}

From these considerations, we see that 
\begin{equation*}
    \bar{\phi}_x (x; a) \bar{\phi}_a (x; a)
    = \frac{1}{a} \bar{\phi} (x; a) \bar{\phi}_x (x; a) 
    - \frac{ax}{\frac{2\beta}{\alpha} - a^2} \bar{\phi}_x (x;a)^2.
\end{equation*}
Since $\bar{\phi} (x; a)$ is an odd function (because $\sn (x;k)$ is odd), we 
see that the product $\bar{\phi}_x (x; a) \bar{\phi}_a (x; a)$ is an odd function, 
and from this we can conclude from \eqref{energy-period-derivative} that 
\begin{align} \label{energy-amplitude-derivative2}
    \frac{d\mathbb{E}}{da} (a) 
    & = 2 \kappa \bar{\phi}_x (L; a) \bar{\phi}_{a} (L; a)
    = \frac{2 \kappa}{a} \bar{\phi} (L; a) \bar{\phi}_x (L; a) 
    - \frac{2 \kappa aL}{\frac{2\beta}{\alpha} - a^2} \bar{\phi}_x (L;a)^2 \nonumber\\
    &= \frac{2 \kappa}{a} \bar{\phi}_x (L; a) \Big( \bar{\phi} (L; a) 
    - \frac{a^2L}{\frac{2\beta}{\alpha} - a^2} \bar{\phi}_x (L;a)\Big).
\end{align}
Recalling that our goal is to understand the sign of $\mathbb{E}' (a)$,
as $a$ varies, we begin by considering a configuration in which $\bar{\phi} (L; a) = 0$.
In this case, 
\begin{align*}
     \frac{d\mathbb{E}}{da} (a) 
     &= - \frac{2 \kappa aL}{\frac{2\beta}{\alpha} - a^2} \bar{\phi}_x (L;a)^2
     = - \frac{2 \kappa aL}{\frac{2\beta}{\alpha} - a^2} \frac{2}{\kappa}(F (0) - F(a)) \\
     &= - \frac{4 aL}{\frac{2\beta}{\alpha} - a^2} \frac{\alpha a^2}{4} \left(\frac{2\beta}{\alpha} - a^2\right)
     = - a^3 \alpha L < 0.
\end{align*}

With this configuration, we have either $\bar{\phi}_x (L; a) < 0$ or $\bar{\phi}_x (L; a) > 0$,
and for specificity we will focus on the former case. If we now think about increasing $a$ a small
amount, we know from the monotonic dependence of $p$ on $a$ that the period $p$ will increase 
a small amount and we will have $\bar{\phi} (L; a) > 0$. As the amplitude $a$ continues to 
increase the value of $\bar{\phi} (L; a)$ will increase, and correspondingly the value of 
$|\bar{\phi}_x (L; a)|$ will decrease. Nonetheless, for $\bar{\phi}_x (L; a) < 0$ and 
$\bar{\phi}(L;a) > 0$ we will continue to have $\mathbb{E}' (a) < 0$ until $\bar{\phi} (L;a)$
achieves its maximum value $\bar{\phi} (L;a) = a$, at which point $\mathbb{E}' (a) = 0$ (because 
$\bar{\phi}_x (L;a) = 0$).
As $a$ increases further, we will have $\bar{\phi}_x (L;a) > 0$, with $|\bar{\phi}_x (L; a)|$ 
sufficiently small so that 
\begin{equation*}
    \bar{\phi} (L; a) 
    - \frac{a^2L}{\frac{2\beta}{\alpha} - a^2} \bar{\phi}_x (L;a) > 0.
\end{equation*}
Since we are now in the setting with $\bar{\phi}_x (L; a) > 0$, we see from 
\eqref{energy-amplitude-derivative2} that we will have $\mathbb{E}' (a) > 0$. In this way, 
we conclude that $\mathbb{E} (a)$ is not monotonically decreasing as $a$ increases. 
Due to the monotonic dependence of $a$ on $p$, we can additionally conclude that 
the energy map $\mathcal{E} (p)$ is not monotonic in $p$, and so is not invertible 
for all values of $p$. 
\end{proof}

According to Proposition \ref{EofPNotInvertible}, we cannot invert 
$\mathcal{E} (p)$, but we will see below that the pseudoinverse
of $\mathcal{E} (p)$ nonetheless provides an effective measure of 
coarsening. 

\begin{definition} \label{pseudoinverse}
    Given a fixed energy $e \in (E_{\min}, E_{\max}]$, we define the associated
    period $p$ to be the pseudoinverse 
    \begin{equation} \label{pseudoinverse_eq}
        p = \inf \{\tilde p>0: \mathcal{E} (\tilde p) \le e\}.
    \end{equation}
\end{definition}

To better understand why the pseudoinverse works well in this case, 
we observe that $\mathcal{E} (p)$ is a decreasing function of $p$ 
except on some small intervals on which $\mathcal{E}' (p)$ is close
to 0. In Figure \ref{energy-period-figure}, we plot $\mathcal{E} (p)$
in the case $\alpha = \beta = 1$, $\kappa = 0.001$, and $L = 1$. We see
from this plot that as $p$ increases, sharp gradients with $\mathcal{E}' (p) < 0$
alternate with plateaus where $\mathcal{E}' (p) \cong 0$. The sharp 
gradients occur near values of $p$ for which $\bar{\phi} (L;a (p)) = 0$,
as in the proof of Proposition \ref{EofPNotInvertible}. Proposition
\ref{EofPPlateaus} addresses the nature of the plateaus. 

\begin{figure}[ht] 
\begin{center}
\includegraphics[width=12cm,height=8.2cm]{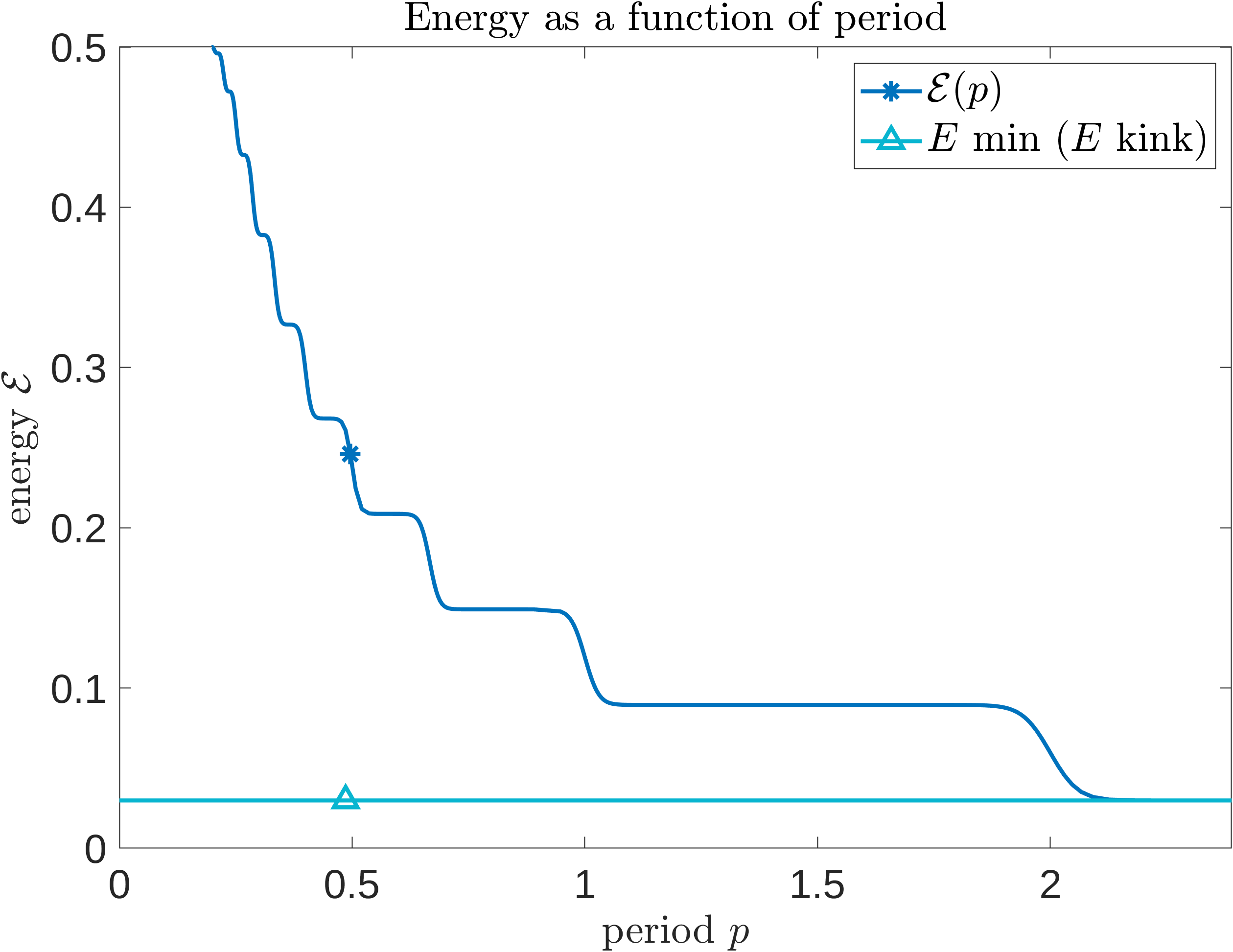}
\end{center}
\caption{Plot of $\mathcal{E} (p)$, computed with $\alpha = \beta = 1$, $\kappa = 0.001$, and $L = 1$. 
\label{energy-period-figure}}
\end{figure}

\begin{proposition} \label{EofPPlateaus}
For \eqref{ch}, let $M$ satisfy the assumptions in {\bf (A)}, and let $F$ be as in \eqref{quarticF}.
Then for any $p > p_{\min}$ for which $\mathcal{E}' (p) > 0$, we have the inequality 
\begin{equation*}
    \mathcal{E}' (p) 
    \le \frac{a^3 \sqrt{\alpha \kappa} (1+\delta)^{3/2} \delta^3 (\delta - 1)}{2L},
\end{equation*}
where $\delta = \sqrt{2\beta/(\alpha a^2)-1}$ and $a$ is the unique amplitude corresponding
with $p$. 
\end{proposition}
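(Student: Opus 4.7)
The plan is to apply the chain rule $\mathcal{E}'(p) = \mathbb{E}'(a)/p'(a)$, valid because $a \mapsto p(a)$ is strictly increasing by Proposition \ref{period-proposition}, and then to bound the numerator from above and the denominator from below.

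For the numerator, I would start from \eqref{energy-amplitude-derivative2} and use $2\beta/\alpha - a^2 = a^2\delta^2$ to rewrite
\begin{equation*}
\mathbb{E}'(a) = \frac{2\kappa}{a}\bar{\phi}_x(L;a)\Big(\bar{\phi}(L;a) - \tfrac{L}{\delta^2}\bar{\phi}_x(L;a)\Big) = \frac{2\kappa\delta^2}{aL}\,w\,\big(\bar{\phi}(L;a) - w\big),
\end{equation*}
with $w := L\bar{\phi}_x(L;a)/\delta^2$. The assumption $\mathbb{E}'(a) > 0$ forces $w$ and $\bar{\phi}(L;a)$ to share a sign (WLOG both positive, by the symmetry $\bar{\phi}(-x;a) = -\bar{\phi}(x;a)$), and maximizing the downward parabola $w \mapsto w(\bar{\phi}(L;a) - w)$ by $\bar{\phi}(L;a)^2/4$, together with $|\bar{\phi}(L;a)| \le a$, yields the clean bound $\mathbb{E}'(a) \le \kappa\delta^2 a/(2L)$.

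For the denominator, I would use Proposition \ref{period-proposition} together with the factored identity $F(y) - F(a) = (\alpha/4)(a^2-y^2)(a^2\delta^2-y^2)$ valid for the quartic \eqref{quarticF}. Substituting $z = y/a$ recasts the derivative formula as
\begin{equation*}
p'(a) = \frac{4\sqrt{2\kappa}}{\sqrt{\alpha}\,a^2}\int_0^1 \frac{1+z^2}{\sqrt{1-z^2}\,(\delta^2-z^2)^{3/2}}\,dz.
\end{equation*}
The elementary bounds $1+z^2 \ge 1$, $\sqrt{1-z^2} \le \sqrt{2(1-z)}$, and $(\delta^2-z^2)^{3/2} \le (2\delta)^{3/2}\big((\delta-1) + (1-z)\big)^{3/2}$ (the last from $\delta + z \le 2\delta$ for $\delta \ge 1$), followed by the substitutions $u = 1-z$ and then $v = u/(\delta-1)$, reduce the remaining integral to the closed form $\int_0^T v^{-1/2}(1+v)^{-3/2}\,dv = 2\sqrt{T/(1+T)}$ (via $v = \tan^2\theta$) with $T = 1/(\delta-1)$. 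Evaluating and simplifying, followed by the mild algebraic inequality $(1+\delta)^{3/2} \ge \delta/(2\sqrt{2})$ needed to match the desired form, yields a lower bound $p'(a) \ge \sqrt{\kappa}/[\sqrt{\alpha}\,a^2\,\delta(1+\delta)^{3/2}(\delta-1)]$.

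Substituting the two bounds into $\mathcal{E}'(p) = \mathbb{E}'(a)/p'(a)$ and simplifying gives the claimed inequality. The main obstacle is the denominator estimate: although the integrand has only an integrable singularity at $z=1$, the lower bound must be strong enough near $z=1$ to capture the blowup $p'(a) \to \infty$ as $\delta \to 1^+$ (the kink limit), which is the source of the $(\delta-1)$ factor in the final expression. Tracking the algebraic constants through the chain of substitutions so as to arrive precisely at the combination $(1+\delta)^{3/2}\delta^3(\delta-1)/(2L)$, rather than at a tighter but less clean variant, is the principal source of bookkeeping.
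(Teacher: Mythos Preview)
Your proof is correct and follows the same overall strategy as the paper: write $\mathcal{E}'(p) = \mathbb{E}'(a)/p'(a)$, bound the numerator above, and bound the denominator below. The details differ in a way that happens to cancel. For the numerator you use a parabola-maximization argument yielding $\mathbb{E}'(a) \le \kappa a\delta^2/(2L)$, a factor of $4$ sharper than the paper's direct substitution (drop the negative term, then use $\bar{\phi}_x(L;a) \le (\delta^2/L)\bar{\phi}(L;a)$ and $|\bar{\phi}(L;a)| \le a$), which gives $2\kappa a\delta^2/L$. For the denominator the paper bounds $(\delta+z)^{3/2} \le (1+\delta)^{3/2}$ at the outset, so that the $(1+\delta)^{3/2}$ in the target falls out naturally, and then uses the crude $1-z \le \delta - z$ to reduce to $\int_0^1 (\delta-z)^{-2}\,dz = 1/(\delta(\delta-1))$, obtaining $p'(a) \ge 4\sqrt{\kappa}/\big(a^2\sqrt{\alpha}(1+\delta)^{3/2}\delta(\delta-1)\big)$. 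You instead use the weaker $(\delta+z)^{3/2} \le (2\delta)^{3/2}$, evaluate the remaining integral exactly, and then have to back into the $(1+\delta)^{3/2}$ form via the auxiliary inequality $(1+\delta)^{3/2} \ge \delta/(2\sqrt{2})$; this loses precisely the factor of $4$ you gained on the numerator. The paper's denominator estimate is more economical and avoids the artificial last step, while your numerator bound is the cleaner of the two.
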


\begin{proof}
Recalling \eqref{energy-amplitude-derivative2} from the proof of Proposition 
\ref{EofPNotInvertible}, we see that if $\bar{\phi}_x (L;a) > 0$ then 
we can only have $\mathbb{E}' (a) > 0$ if 
\begin{equation} \label{barphiInequality}
    \frac{a^2L}{\frac{2\beta}{\alpha} - a^2} \bar{\phi}_x (L;a) < \bar{\phi} (L; a), 
\end{equation}
so that 
\begin{equation*}
    \bar{\phi}_x (L;a)
    \le \frac{\frac{2\beta}{\alpha} - a^2}{a^2L} \bar{\phi} (L; a). 
\end{equation*}
If we substitute this inequality into \eqref{energy-amplitude-derivative2}, we
see that 
\begin{equation} \label{EprimeEstimate}
    \frac{d \mathbb{E}}{da} (a) 
    \le \frac{2 \kappa}{a} \bar{\phi}_x (L; a) \bar{\phi} (L; a)
    < \frac{2 \kappa}{a} \frac{\frac{2\beta}{\alpha} - a^2}{a^2L} \bar{\phi} (L; a)^2
    \le \frac{2 \kappa}{a} \frac{\frac{2\beta}{\alpha} - a^2}{L},
\end{equation}
where in obtaining the final inequality we have observed that $|\bar{\phi} (L;a)| \le a$.

In order to translate this into information about the size of $\mathcal{E}' (p)$, we 
observe the relation 
\begin{equation} \label{ChainRule}
    \mathcal{E}' (p) = \mathbb{E}'(a)/p'(a).
\end{equation}
We have seen in the proof of Proposition \ref{period-proposition} that 
\begin{equation} \label{pprime}
    p' (a) = 2\sqrt{2\kappa} \int_0^1 \frac{G(az) - G(a)}{(F(az) - F(a))^{3/2}} \,dz,
\end{equation}
where 
\begin{equation*}
    G (y) = F (y) - \frac{y}{2} F'(y),
\end{equation*}
and moreover noted in the same proof that for all $a \in (0, \sqrt{\beta/\alpha})$ we 
have $p' (a) > 0$ (under the assumptions of that proposition). For our choice 
of $F$ \eqref{quarticF}, we have the relations 
\begin{equation*}
    \begin{aligned}
        F(az) - F(a) &= \frac{\alpha a^4}{4} (1 - z^2) (\delta^2 - z^2), \\
        G(az) - G(a) &= \frac{\alpha a^4}{4} (1-z^2) (1+z^2),
    \end{aligned}
\end{equation*}
where for notational brevity we have set 
\begin{equation*}
    \delta^2 := \frac{2 \beta}{\alpha a^2} - 1,
\end{equation*}
which is greater than 1 for all $a \in (0, \sqrt{\frac{\beta}{\alpha}})$. Upon 
substituting these relations into \eqref{pprime}, we arrive at the 
relation 
\begin{equation*}
    \begin{aligned}
    p' (a) &= \frac{4\sqrt{2 \kappa}}{a^2 \sqrt{\alpha}} 
    \int_0^1 \frac{1+z^2}{(1-z^2)^{1/2} (\delta^2 - z^2)^{3/2}} \,dz \\
    &=  \frac{4\sqrt{2 \kappa}}{a^2 \sqrt{\alpha}} 
     \int_0^1 \frac{1+z^2}{[(1-z)(1+z)]^{1/2} [(\delta - z)(\delta + z)]^{3/2}} \,dz. 
    \end{aligned}
\end{equation*}
We are primarily interested in a lower bound on this quantity, 
\begin{equation*}
    \begin{aligned}
    p'(a) &\ge \frac{4\sqrt{2 \kappa}}{a^2 \sqrt{\alpha}} 
    \frac{1}{\sqrt{2} (1+\delta)^{3/2}}
    \int_0^1 \frac{1}{(1-z)^{1/2} (\delta - z)^{3/2}} \,dz \\
    &\ge \frac{4\sqrt{\kappa}}{a^2 \sqrt{\alpha} (1+\delta)^{3/2}} 
    \int_0^1 \frac{1}{(\delta - z)^{2}} \,dz, 
    \end{aligned}
\end{equation*}
where the second inequality follows immediately from the observation that $\delta > 1$. 
Integrating directly, we now arrive at the inequality 
\begin{equation} \label{pprimeEstimate}
    p' (a) \ge \frac{4\sqrt{\kappa}}{a^2 \sqrt{\alpha} (1+\delta)^{3/2}} 
    \Big(\frac{1}{\delta (\delta - 1)} \Big).
\end{equation}
The stated estimate on $\mathcal{E}' (p)$ now follows from \eqref{ChainRule}, 
\eqref{EprimeEstimate}, and \eqref{pprimeEstimate}. 

Finally, we recall that these calculations have been carried out 
under the assumption $\bar{\phi}_x (L;a) > 0$, and note that the case 
with $\bar{\phi}_x (L;a) < 0$ is similar.
\end{proof}

\begin{remark}
As expected, we see from Proposition \ref{EofPPlateaus} that as the amplitude $a$ tends toward the 
maximum amplitude $\sqrt{\beta/\alpha}$, $p'(a)$ tends toward $\infty$ (since $\delta$ 
tends toward 1). I.e., for late-stage coarsening, $\mathcal{E}' (p)$ is much smaller 
than $\mathbb{E}' (a)$. For intermediate values of $a$, $p' (a)$ is proportional to 
$\sqrt{\kappa}$, so that $\mathcal{E}' (p)$ is (also) proportional to $\sqrt{\kappa}$. 

Although we are primarily interested in intermediate- to late-stage coarsening, it is perhaps
interesting to note the behavior of $\mathcal{E}' (p)$ as $a$ tends toward 0 (so $p$ tends toward
$p_{\min}$). As noted previously, in the event that $\mathbb{E}' (a) > 0$, we have the inequality 
\begin{equation*}
    \mathbb{E}' (a) < \frac{2\kappa}{a} \bar{\phi}_x (L; a) \bar{\phi} (L; a).
\end{equation*}
Recalling \eqref{periodic-wave}, we see that 
\begin{equation*}
    |\bar{\phi}_x (L; a)| 
    \le a \sqrt{\frac{\beta}{\kappa}},
\end{equation*}
where we have noted that $|\sn' (x; k)| \le 1$ for all $x \in \mathbb{R}$
(see, e.g., \cite{NIST:DLMF}). 
In this way, we conclude that 
\begin{equation*}
    \mathbb{E}' (a) \le 2a \sqrt{\beta \kappa}.
\end{equation*}
We can also obtain an alternative lower bound on $p' (a)$,
\begin{equation*}
    \begin{aligned}
        p' (a) &= \frac{4\sqrt{2 \kappa}}{a^2 \sqrt{\alpha}} 
     \int_0^1 \frac{1+z^2}{(1-z^2)^{1/2} (\delta^2 - z^2)^{3/2}} dz \\
     &\ge \frac{4\sqrt{2 \kappa}}{a^2 \delta^3 \sqrt{\alpha}} \int_0^1 \frac{1}{\sqrt{1-z^2}} dz
     = \frac{2 \pi\sqrt{2 \kappa}}{a^2 \delta^3 \sqrt{\alpha}}.
    \end{aligned}
\end{equation*}
Combining these observations, we see that 
\begin{equation*}
    \mathcal{E}' (p) = \mathbb{E}' (a)/p'(a)
    \le 2a \sqrt{\beta \kappa} \frac{a^2 \delta^3 \sqrt{\alpha}}{2 \pi\sqrt{2 \kappa}}
    = \frac{\sqrt{\alpha \beta}}{\pi \sqrt{2}} (\frac{2\beta}{\alpha} - a^2)^{3/2}.
\end{equation*}
We conclude that $\mathcal{E}' (p)$ is bounded above as $p$ approaches $p_{\min}$, though 
no longer proportional to $\sqrt{\kappa}$. 
\end{remark}

As an illustration of the considerations discussed in this section, we consider again 
the specific case of \eqref{ch} with $\kappa = 0.001$, $M (\phi) \equiv 1$, and $F$ as in \eqref{quarticF} 
with $\alpha = 1$ and $\beta = 1$. Referring to Figure \ref{energy-period-figure}, we pick the 
plateau between about 0.29 and 0.32, and 
in Figure \ref{EofPzoomed}
we zoom 
in on this flat section to see that in fact there is a small interval of periods for 
which monotonicity is lost. 




\begin{figure}[ht] 
\includegraphics[width=1\textwidth]{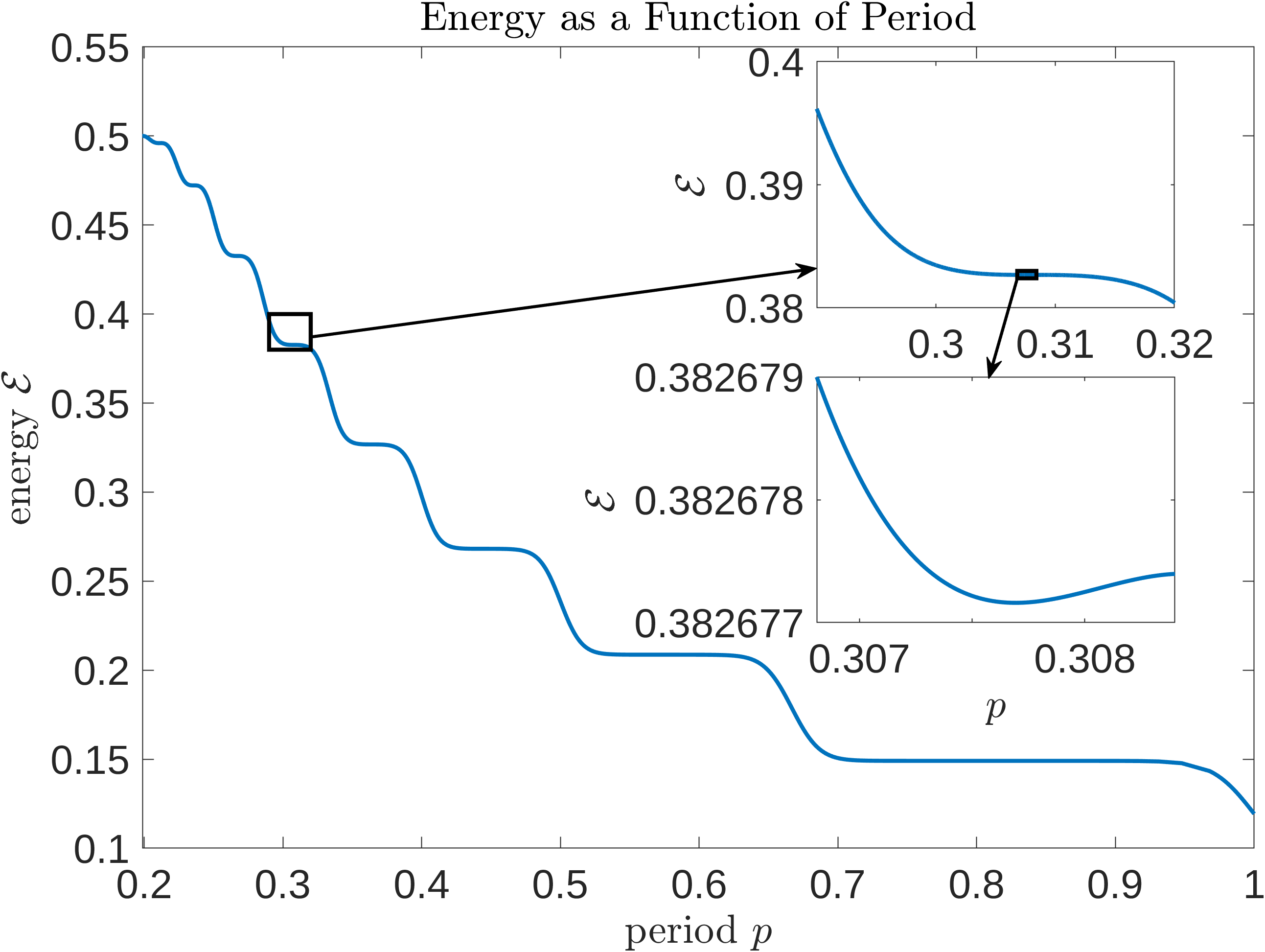}
\caption{\label{EofPzoomed} Plot of $\mathcal{E} (p)$, with zoom-ins at $p \in [0.29, 0.32]$ and $p \in [0.3068, 0.3084]$ inset.  Computed with $\alpha = \beta = 1$, $\kappa = 0.001$, and $L = 1$.}
\end{figure}




We end this section by briefly contrasting our measure of coarsening with 
the measure introduced by Kohn and Otto in \cite{KO02} in the multidimensional
case. For our one-dimensional setting, Definition 1 from p. 383 of \cite{KO02}
can be expressed as follows. 

\begin{definition}
    For any function $\phi (x)$ periodic on $[-L, +L]$ with mean 
    value 0, the Kohn--Otto length is defined to be 
    \begin{equation*}
        \ell_{KO} 
        := \sup \Big{\{} \frac{1}{2L} \int_{-L}^{+L} \phi (x) \zeta (x) dx: \,
        \zeta \in C^1 ([-L, +L]) \,\, \textrm{periodic with}\, \sup_{x \in [-L,+L]} |\zeta' (x)| \le 1\Big{\}}.
    \end{equation*}
\end{definition}

The clear advantage of the Kohn--Otto length over our measure of coarsening 
is that it readily generalizes to the multidimensional setting (the setting for 
which it was introduced in \cite{KO02}). In our one-dimensional setting, 
however, our measure has
several distinct advantages: (1) we can assign a length scale to any 
function $\phi \in C^1 ([-L, +L])$ as long as $E (\phi) \in  (E_{\min}, E_{\max}]$; 
(2) since our measure of length is directly linked to solutions of 
\eqref{ch}, it serves as a more precise indicator of the extent to which 
the associated solution $\phi (x, t)$ has evolved toward its final 
asymptotic state; and (3) our measure is more readily computed (by \eqref{pseudoinverse_eq}).  


\subsection{Coarsening rates}
\label{coarsening-rates-section}

We now proceed by comparing coarsening rates for 
solutions of \eqref{ch} computed 
in three different ways: (1) by direct numerical 
integration of \eqref{ch}; (2) by a long-time coarsening 
model due to Langer \cite{L71}; and (3) by a
coarsening model due to one of the authors \cite{H11}. 

{\bf Computation.} For our direct computations, we will
initialize the flow with random perturbations of the 
homogeneous configuration $\phi_0 (x) \equiv 0$. We 
then solve forward until the energy $E(\phi (x, t))$
reaches the spinodal energy $E_s$, and it is from this 
point that we compare energies computed in three 
different ways. 

{\bf Langer's relation.} 
In \cite{L71}, Langer employs a statistical development to capture thermal 
fluctuations driving phase separation, and arrives at a straightforward
equation for the evolution of a coarseness measure $\ell$ as a function
of time. In the current framework, we can interpret equation (6.26)
in \cite{L71} as an equation for the period $p$ as a function of 
time, 
\begin{equation} \label{langer_period}
p (t) = p_0 + \sqrt{\frac{2 \kappa}{\beta}} 
\ln \Big(1 + \frac{16 \beta^2 (t-t_0)}{\kappa} e^{-\frac{p_0}{\sqrt{2\kappa/\beta}}} \Big).
\end{equation}
Here, we take the period to be $p_0$ at time $t_0$.

\begin{remark} \label{Langer626remark}
Equation (6.26) in \cite{L71} is 
\begin{equation} \label{Langer626}
\ell (t) = \ell_0 + \frac{\xi}{2} 
\ln \Big(1 + \frac{32 t}{\tau_0} e^{- 2 \ell_0/\xi} \Big),     
\end{equation}
where $\ell (t) = p(t)/2$ (with $p(t)$ as in the current 
analysis; see Figure 4 in \cite{L71}). In obtaining 
\eqref{langer_period}, we have chosen Langer's constants
$\Gamma$, $a$, $d$, $\kappa_b$, and $T$ so that 
\begin{equation*}
    \frac{\Gamma a^{2+d}}{2 \kappa_b T} = 1,
\end{equation*}
in which case equation (3.4) in \cite{L71} becomes
(in the notation of \cite{L71})
\begin{equation*}
    \bar{\eta}_t = (- \epsilon_0 \xi_0^2 \bar{\eta}_{xx} + F' (\bar{\eta}))_{xx}.
\end{equation*}
In particular, we obtain our equation \eqref{ch} with 
$M(\phi) \equiv 1$ and $\kappa = \epsilon_0 \xi_0^2$. The 
combination $\epsilon_0 \xi_0^2$ always appears together in the 
development of \cite{L71}, and is replaced below with the constant  
$\kappa$.
(Here, we have denoted Boltzmann's constant $\kappa_b$ to 
distinguish it from our $\kappa$; Langer denotes it by 
$\kappa$.) In this case, $\tau_0 = 2 \kappa/\beta^2$ and $\xi = \sqrt{2\kappa/\beta}$,
so that \eqref{Langer626} becomes  
\begin{equation*} 
\ell (t) = \ell_0 + \sqrt{\frac{\kappa}{2 \beta}} 
\ln \Big(1 + \frac{16 \beta^2 t}{\kappa} e^{- 2 \ell_0/\sqrt{2\kappa/\beta}} \Big).    
\end{equation*}
Our \eqref{langer_period} is obtained by multiplying this last expression by 
$2$, replacing $2 \ell_0$ with $p_0$, and allowing the initial time 
to be $t_0 > 0$. (Langer takes the initial time to be $0$ by convention, 
initiating late-stage dynamics at $t = 0$.)
\end{remark}

{\bf The approach of \cite{H11}}. A drawback of Langer's approach 
in \cite{L71} is that approximations are made that require the 
coarsening process to be at an asymptotically late stage. In 
particular, Langer approximates late-stage steady-state periodic solutions 
by piecing together enriched regions with transitions taken from 
kink and antikink solutions. In \cite{H11}, this approach is refined
by the use of exact periodic solutions $\bar{\phi} (x; a)$ in place 
of the approximate solutions, with the further advantage that these 
periodic solutions can serve to approximate the dynamics as early as 
the spinodal time. Following this replacement of the approximate
solutions with exact solutions, the method of \cite{H11} follows
Langer's approach of linearizing and determining the coarsening 
rate from the eigenvalues of the resulting eigenvalue problem.
Before describing the model obtained in this way, we briefly summarize 
an efficient method for computing these eigenvalues. First, if 
\eqref{ch} (with $M \equiv 1$) is linearized
about $\bar{\phi} (x; a)$, with $\phi = \bar{\phi} + v$,
we obtain the perturbation equation 
\begin{equation*}
    v_t = (- \kappa v_{xx} + F'' (\bar{\phi})v)_{xx},
\end{equation*}
and the corresponding eigenvalue problem 
\begin{equation} \label{CH-EV-eqn}
    (- \kappa \psi'' + F''(\bar{\phi}) \psi)'' = \lambda \psi.
\end{equation}

As a starting point, we can explicitly compute the right-most
eigenvalue for the limiting case with amplitude $a = 0$, 
corresponding with $\bar{\phi} (x; 0) \equiv 0$. In this
case, \eqref{CH-EV-eqn} becomes 
\begin{equation*}
- \kappa \psi'''' - \beta \psi'' = \lambda \psi,
\end{equation*}
where we have observed that $F'' (0) = - \beta$. This 
equation only has $L^{\infty} (\mathbb{R})$ eigenvalues, 
and $\lambda$ is such an eigenvalue if and only if 
$\psi (x) = e^{i \xi x}$ is a solution for some 
$\xi \in \mathbb{R}$. Looking for such solutions, we find 
\begin{equation*}
    - \kappa \xi^4 + \beta \xi^2 = \lambda,
\end{equation*}
and so to find the maximum possible value of $\lambda$
we maximize $\lambda (\xi)$. We find that the maximum 
occurs at $\xi = \pm \sqrt{\beta/(2\kappa)}$, with 
\begin{equation*}
    \lambda_{\max}  = \frac{\beta^2}{4\kappa}.
\end{equation*}
For the specific values $\kappa = 0.001$ and $\beta = 1$,
this is $\lambda_{\max} = 250$. 

More generally, in \cite{H11} (which adapts the approach 
of \cite{Gardner1993, Gardner1997} to the current setting), 
Floquet theory is used to characterize the leading eigenvalue associated
with any periodic solution $\bar{\phi} (x; a)$. 
To understand how this works, we first express 
the eigenvalue problem \eqref{CH-EV-eqn}
as a first-order system, setting $y = (y_1, y_2, y_3, y_4)^T$,
with $y_j = \psi^{(j)}$, $j = 0, 1, 2, 3$. Then $y$ solves 
the system 
\begin{equation} \label{ev-ode}
    y' = \mathbb{A} (x; \lambda) y,
    \quad 
    \mathbb{A} (x; \lambda)
    = \begin{pmatrix}
        0 & 1 & 0 & 0 \\
        0 & 0 & 1 & 0 \\
        0 & 0 & 0 & 1 \\
        (b''(x) - \lambda)/\kappa & 2b'(x)/\kappa & b(x)/\kappa & 0
    \end{pmatrix},
\end{equation}
where $b(x) = F''(\bar{\phi} (x; a))$.
We can compute a fundamental matrix for \eqref{ev-ode},
\begin{equation*}
    \Phi' = \mathbb{A} (x; \lambda) \Phi,
    \quad \Phi (0; \lambda) = I_4,
\end{equation*}
and the monodromy matrix is defined to be $M(\lambda; p) := \Phi (p; \lambda)$.
The Evans function is 
\begin{equation}
    D(\lambda, \xi) 
    = \det (M(\lambda; p) - e^{i \xi p} I_4), 
\end{equation}
and in this setting $\lambda$ is an eigenvalue of \eqref{CH-EV-eqn}
if and only if $D (\lambda, \xi) = 0$ for some $\xi \in \mathbb{R}$.
(See \cite{Gardner1993, Gardner1997} for details on the 
Evans function for periodic solutions generally, and \cite{H09, H11} for 
specialization to the current setting.)

In practice, we can use this to compute leading eigenvalues in 
the following relatively efficient way. First, we know that 
for $a = 0$, with corresponding minimum period $p_{\min}$ 
(from \eqref{minimim-period}), the leading eigenvalue is 
$\lambda_{\max}  = \frac{\beta^2}{4\kappa}$. We now increment
the value $a$ by some small step $\Delta a$, and compute the 
corresponding period $p(a+\Delta a)$, using \eqref{a-to-p}.
The leading eigenvalue for $\bar{\phi} (x; a+ \Delta a)$ will
be just below the leading eigenvalue for $\bar{\phi} (x; a)$,
so we search for zeros of $D (\lambda, \xi)$ with values of 
$\lambda$ just below $\lambda_{\max}$. Repeating this process
iteratively we obtain the leading eigenvalues. 
For $\alpha = 1$, $\beta = 1$, and $\kappa = 0.001$, a plot of 
leading eigenvalues as a function of amplitude $a$ is depicted 
in Figure \ref{evplot1} for two different values of $\kappa$. 

\begin{figure}[ht] 
\begin{center}
\includegraphics[width=12cm,height=8.2cm]{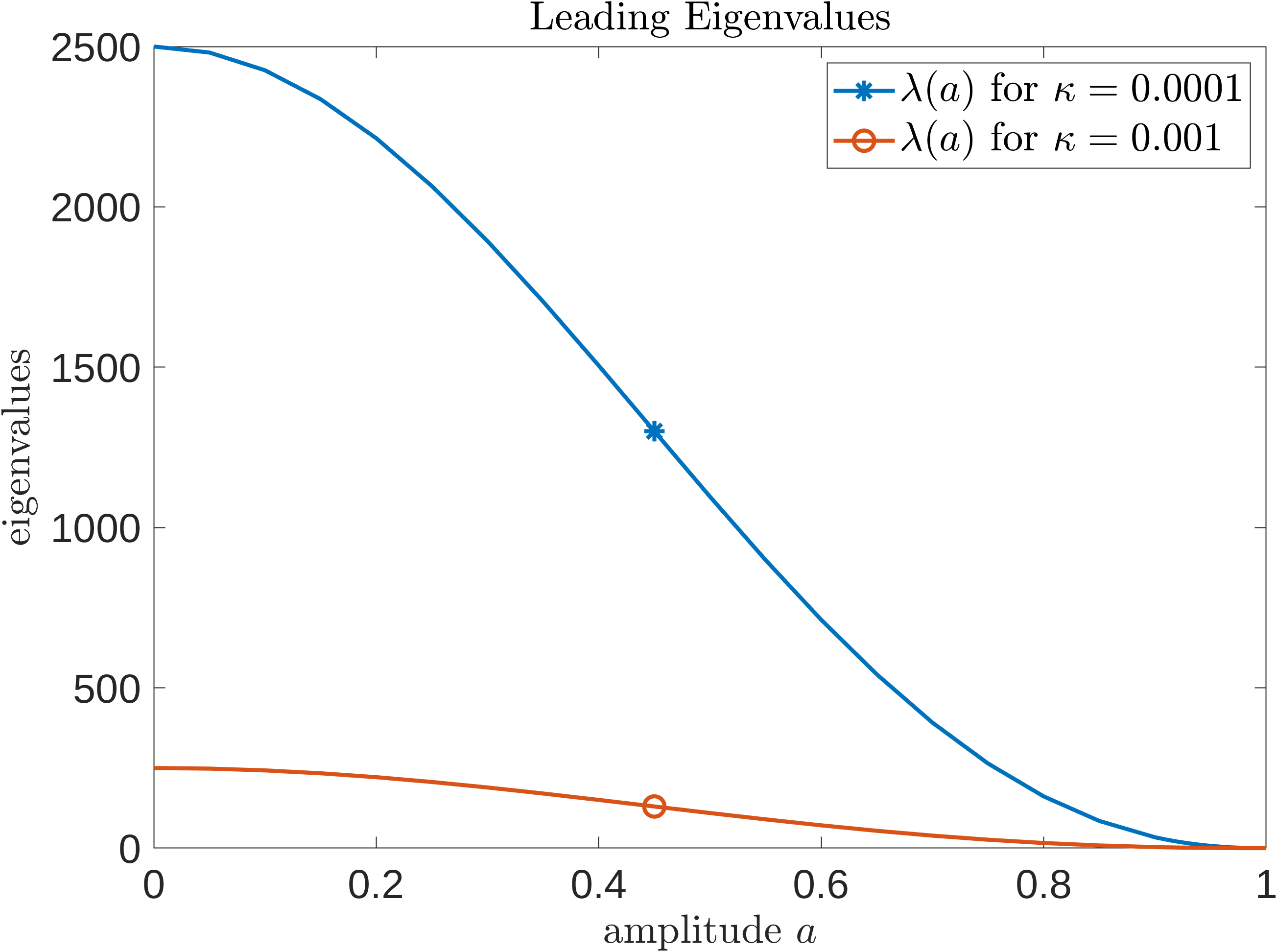}
\end{center}
\caption{Plot of leading eigenvalues versus amplitude, 
computed with $\alpha = \beta = 1$, and $\kappa = 0.001$ and $\kappa = 0.0001$. 
\label{evplot1}}
\end{figure}

According to \cite{H11}, the period $p(t)$ associated with 
an evolving solution of \eqref{ch} evolves approximately 
according to the relation 
\begin{equation} \label{eigenvalue-ode}
    \frac{dp}{dt}
    = \lambda_{\max} (p) p,
    \quad p(t_0) = p_0,
\end{equation}
where $\lambda_{\max} (p)$ denotes the maximum eigenvalue 
associated with the specified periodic solution $\bar{\phi} (x;a(p))$. 


\begin{remark}\label{half_ev_remark}
In the derivation of \eqref{eigenvalue-ode} in \cite{H11}, 
a small modification to Langer's framework is employed. Precisely,
the equation arising from a more faithful adaptation of 
Langer's original relations is
\begin{equation} \label{eigenvalue-ode-langer}
    \frac{dp}{dt}
    = \frac12\lambda_{\max} (p) p,
    \quad p(t_0) = p_0.
\end{equation}
As a side note to our analysis, we will compare how these
two possible methods compare with numerically generated 
solutions. 


If Langer's method and the eigenvalue method are both initialized 
by the minimum period $p_{\min}$ (from \eqref{minimim-period}), 
the evolution in time proceeds as in Figure \ref{max-coarsening-figure}
(computed for $\alpha = \beta = 1$, $\kappa = 0.001$). 
Using $\mathcal{E} (p)$ to map periods to energies, we can evolve
energy as a function of time for both Langer's method and the 
eigenvalue method. This is depicted in Figures \eqref{max-energy-figure} and \eqref{max-coarsening-figure}. 
In order to see that the primary difference between the two approaches
is a matter of scaling, we create the same two plots using 
\eqref{eigenvalue-ode-langer} in place of \eqref{eigenvalue-ode},
as displayed in Figures \eqref{max-energy-figure-half} and \eqref{max-coarsening-figure-half}
\end{remark}






\begin{figure}[ht] 
\begin{minipage}[T]{0.48\textwidth}
\centering
\begin{subfigure}[T]{1\textwidth}
\includegraphics[width=1\textwidth]{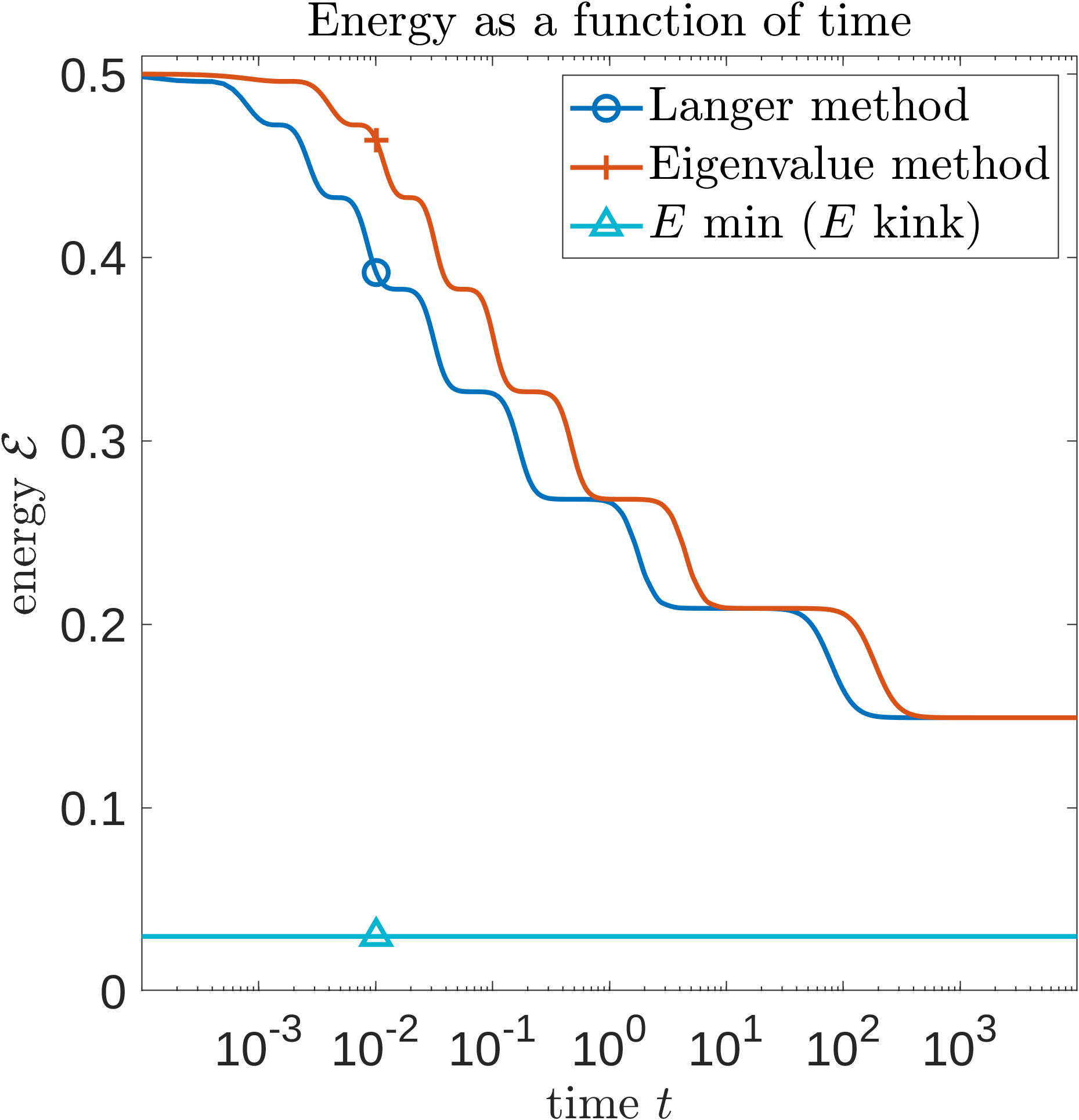}
\caption{\label{max-energy-figure}}
\end{subfigure}
\end{minipage}
\hfill 
\begin{minipage}[T]{0.48\textwidth}
\centering
\begin{subfigure}[T]{1\textwidth}
\includegraphics[width=1\textwidth]{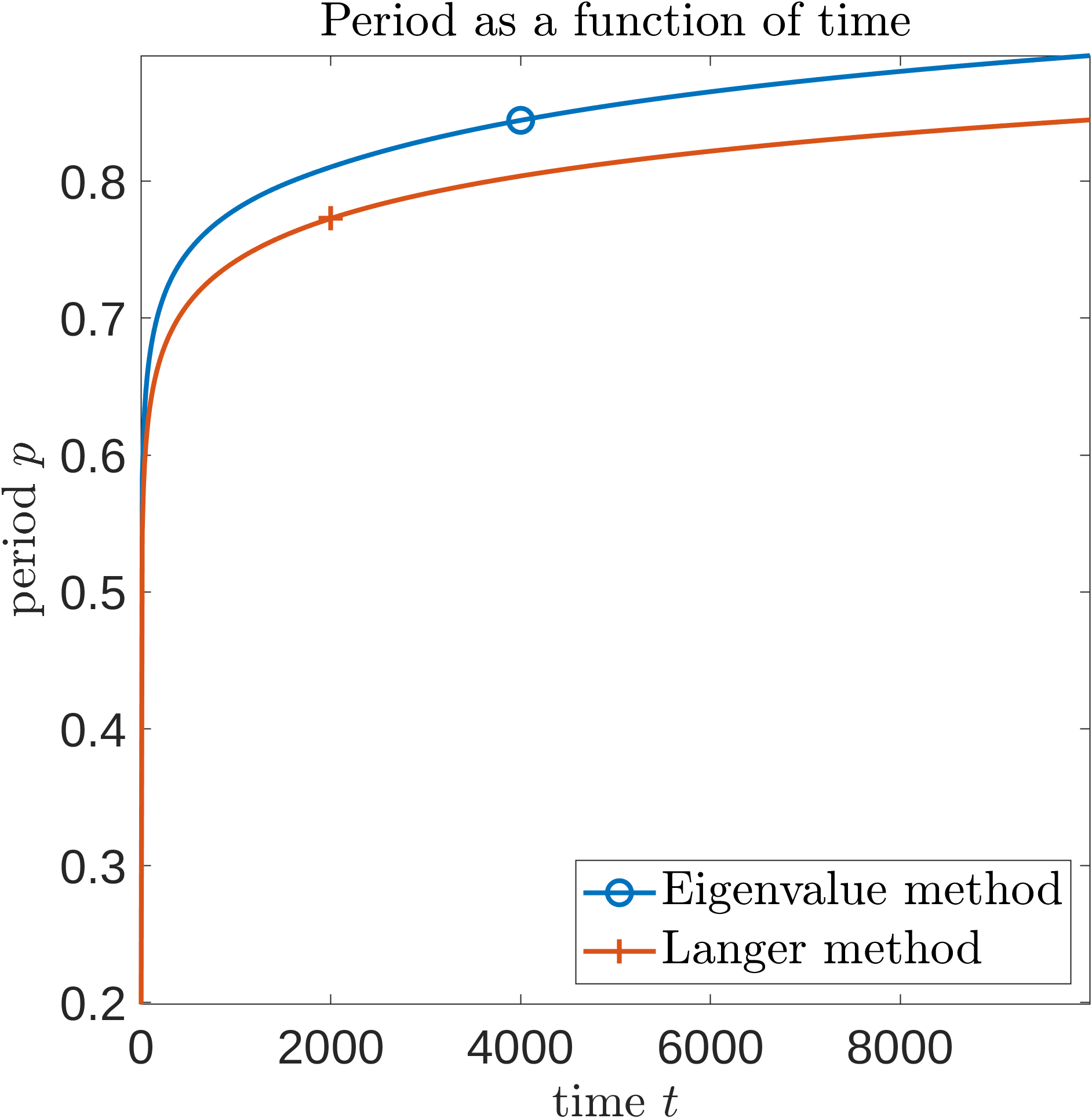}
\caption{\label{max-coarsening-figure}}
\end{subfigure}
\end{minipage}
%
\begin{minipage}[T]{0.48\textwidth}
\centering
\begin{subfigure}[T]{1\textwidth}
\includegraphics[width=1\textwidth]{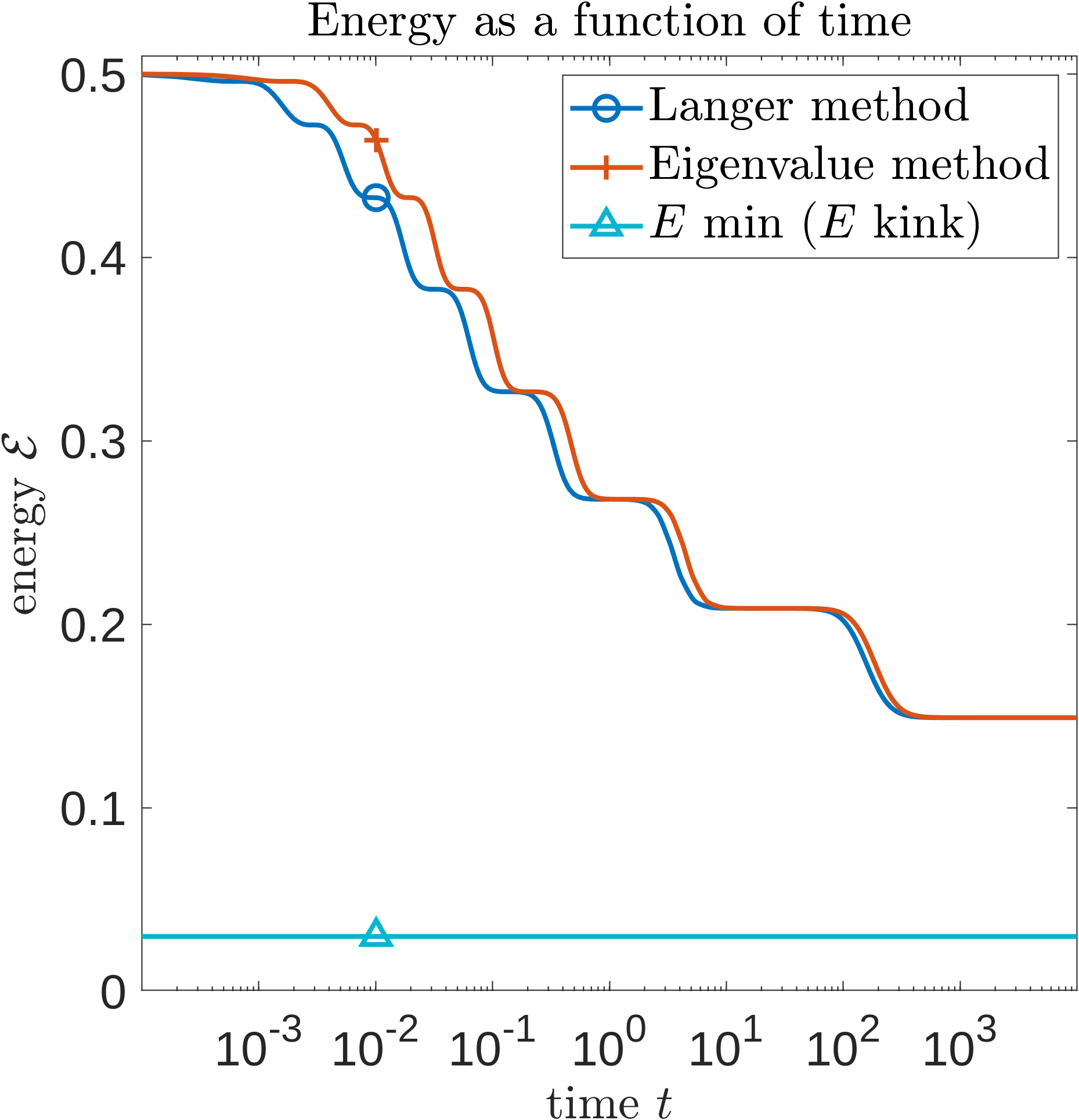}
\caption{\label{max-energy-figure-half}}
\end{subfigure}
\end{minipage}
\hfill 
\begin{minipage}[T]{0.48\textwidth}
\centering
\begin{subfigure}[T]{\textwidth}
\includegraphics[width=1\textwidth]{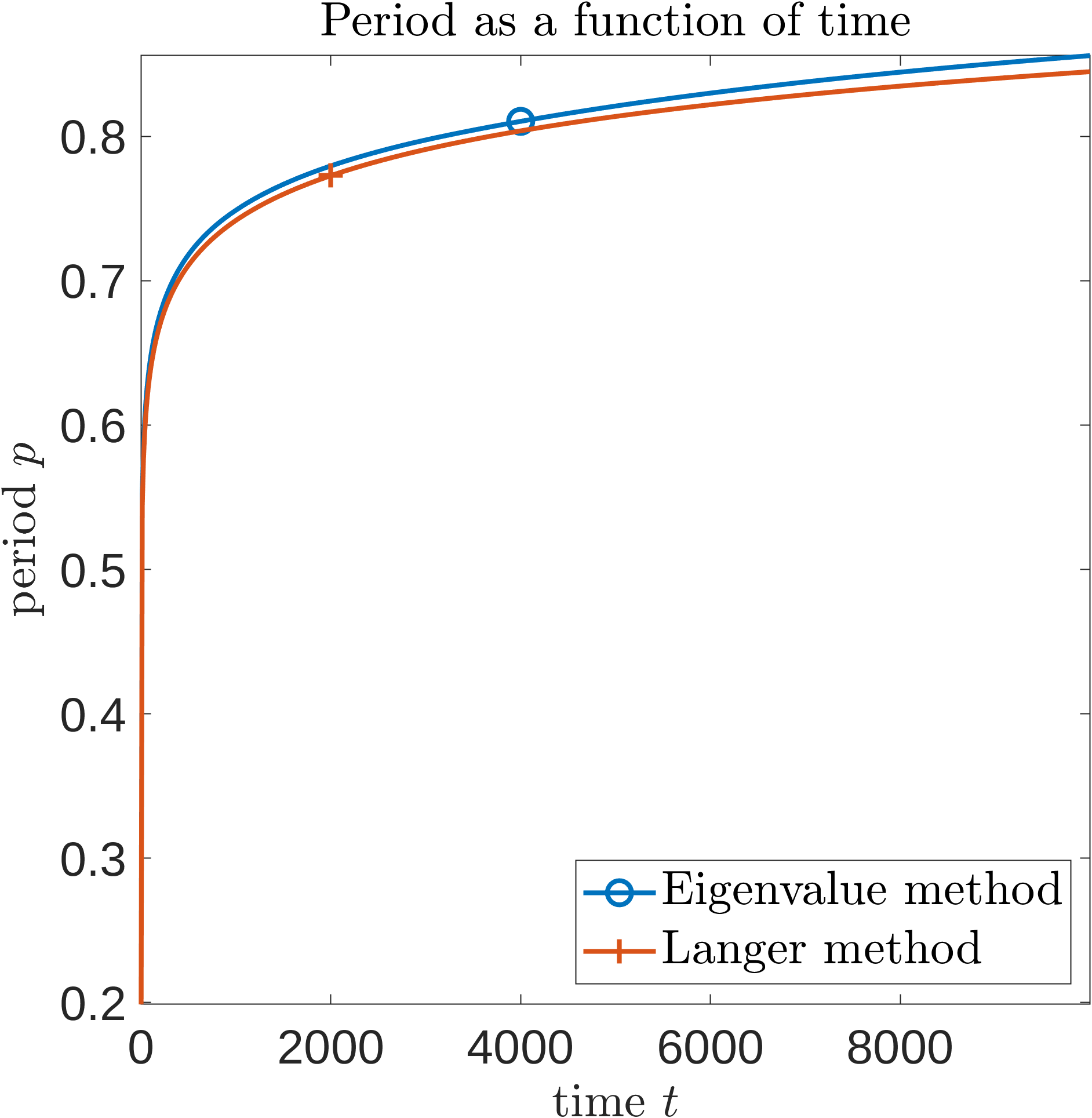}
\caption{\label{max-coarsening-figure-half}}
\end{subfigure}
\end{minipage}
\caption{\label{2x2}Evolution of energies ((a) and (c)) and periods ((b) and (d)) via Langer's method and the eigenvalue method.  For the eigenvalue method, equation \eqref{eigenvalue-ode} was used in (a) and (b), and equation \eqref{eigenvalue-ode-langer} (with the extra $\frac12$-factor) was used in (c) and (d),  cf. Remark \ref{half_ev_remark}.}
\end{figure}



\FloatBarrier

\subsection{Varying \texorpdfstring{$\kappa$}{}}
In this section, we observe that it is straightforward to vary 
$\kappa$ in the above energy calculations. First, we note that 
for a fixed interval $[-L, +L]$ the maximum 
energy specified in \eqref{maximum-energy} does not depend on $\kappa$. 
On the other hand, the minimum energy specified in 
Item (iv) of Proposition \ref{specific-F-proposition} 
is proportional to $\sqrt{\kappa}$. 
For evolution on bounded domains, transition layers are 
typically removed through the boundary in pairs, so we 
expect to see energy drops in steps of size 
$2 E_{\min}$. For $\alpha = 1$, $\beta = 1$, and $L = 1$, 
if $\kappa = 0.001$ then $2 E_{\min} = 0.0596$, 
and likewise if $\kappa = 0.0001$, then $2 E_{\min} = 0.0189$, and if 
$\kappa = 0.00001$, then $2 E_{\min} = 0.0060$. Since the energy 
declines in steps of these sizes, energy plots such as the two 
depicted in Figure \ref{energies_kappa3} have less pronounced steps
for smaller values of $\kappa$. 

For Langer's approach, dependence on $\kappa$ is explicit in 
\eqref{langer_period}, and so changes in $\kappa$ are readily 
accommodated. For the eigenvalue approach, we need to identify 
how the leading eigenvalues vary with $\kappa$. To this end, we 
fix a choice of $F$
with the form \eqref{quarticF}, and we observe 
that if $\bar{\phi} (x)$
denotes a periodic solution of \eqref{ch} obtained with $\kappa = 1$, then 
for any $\kappa > 0$, $\bar{\phi}^{\kappa} (x) := \bar{\phi} (x/\sqrt{\kappa})$ is a 
periodic solution of \eqref{ch} obtained with the value 
$\kappa$. Upon linearization of \eqref{ch} about $\bar{\phi}^{\kappa} (x)$,
we arrive at the eigenvalue problem 
\begin{equation*}
    (-\kappa \psi'' + F'' (\bar{\phi}^\kappa (x)) \psi)'' = \lambda \psi.
\end{equation*}
We can express this equation as 
\begin{equation*}
    (-\kappa \psi'' + b(x/\sqrt{\kappa}) \psi)'' = \lambda \psi,
\end{equation*}
where $b (x/\sqrt{\kappa}) = F'' (\bar{\phi} (x/\sqrt{\kappa}))$, 
and we can also express this as 
\begin{equation*}
    - \kappa \psi'''' + \frac{1}{\kappa} b''(x/\sqrt{\kappa}) \psi 
    + \frac{2}{\sqrt{\kappa}} b' (x/\sqrt{\kappa}) \psi' 
    + b(x/\sqrt{\kappa}) \psi'' = \lambda \psi. 
\end{equation*}
At this point, we make the change of variables
\begin{equation*}
    y = \frac{x}{\sqrt{\kappa}}, \quad
    \Psi (y) = \psi(x) \implies \psi^{(k)} (x) = \frac{1}{\kappa^{k/2}} \Psi^{(k)} (y), 
    \quad k = 1,2,3,\dots.
\end{equation*}
Upon substitution, we see that the equation for $\Psi (y)$ is 
\begin{equation*}
    -(\Psi'' + b(y) \Psi)'' = \kappa \lambda \Psi.
\end{equation*}
We take from this that $\lambda$ is an eigenvalue for 
the equation with general $\kappa$ if and only if $\kappa \lambda$
is an eigenvalue for the equation with $\kappa = 1$. This observation 
allows us to compute leading eigenvalues for any fixed $\kappa > 0$,
and obtain leading eigenvalues associated with all other values 
of $\kappa$ by an appropriate scaling argument. To make this 
precise, suppose that for a specific value $\kappa_0$, we compute
the leading eigenvalue $\lambda_0$ associated with the periodic 
solution with amplitude $a$. Then $\kappa_0 \lambda_0$ will be 
the leading eigenvalue associated with the periodic solution 
with amplitude $a$ arising as a solution to \eqref{ch} with 
$\kappa = 1$. Correspondingly, the leading eigenvalue associated
with the periodic solution with amplitude $a$ arising as a solution
to \eqref{ch} with any general value $\kappa > 0$ will be 
$(\kappa_0/\kappa) \lambda_0$. In fact, we have already seen an 
example of this scaling effect in Figure \ref{evplot1}. 
 
In Figure \ref{energies_kappa3}, we apply these ideas 
to generate for comparison plots of energy as a function 
of time for both Langer's method and the eigenvalue method
for three values of $\kappa$, namely $\kappa = 0.001$, 
$\kappa = 0.0001$, and $\kappa = 0.00001$. In each case, the energy at 
$t = 0$ is the same maximum energy $E_{\max} = 0.5$, 
but the plots are depicted starting with $t = 0.01$,
at which point the energies have already declined at 
varying rates. (These calculations use \eqref{eigenvalue-ode}
rather than \eqref{eigenvalue-ode-langer}.)

\begin{figure}[ht] 
\begin{center}
\includegraphics[width=12cm,height=8.2cm]{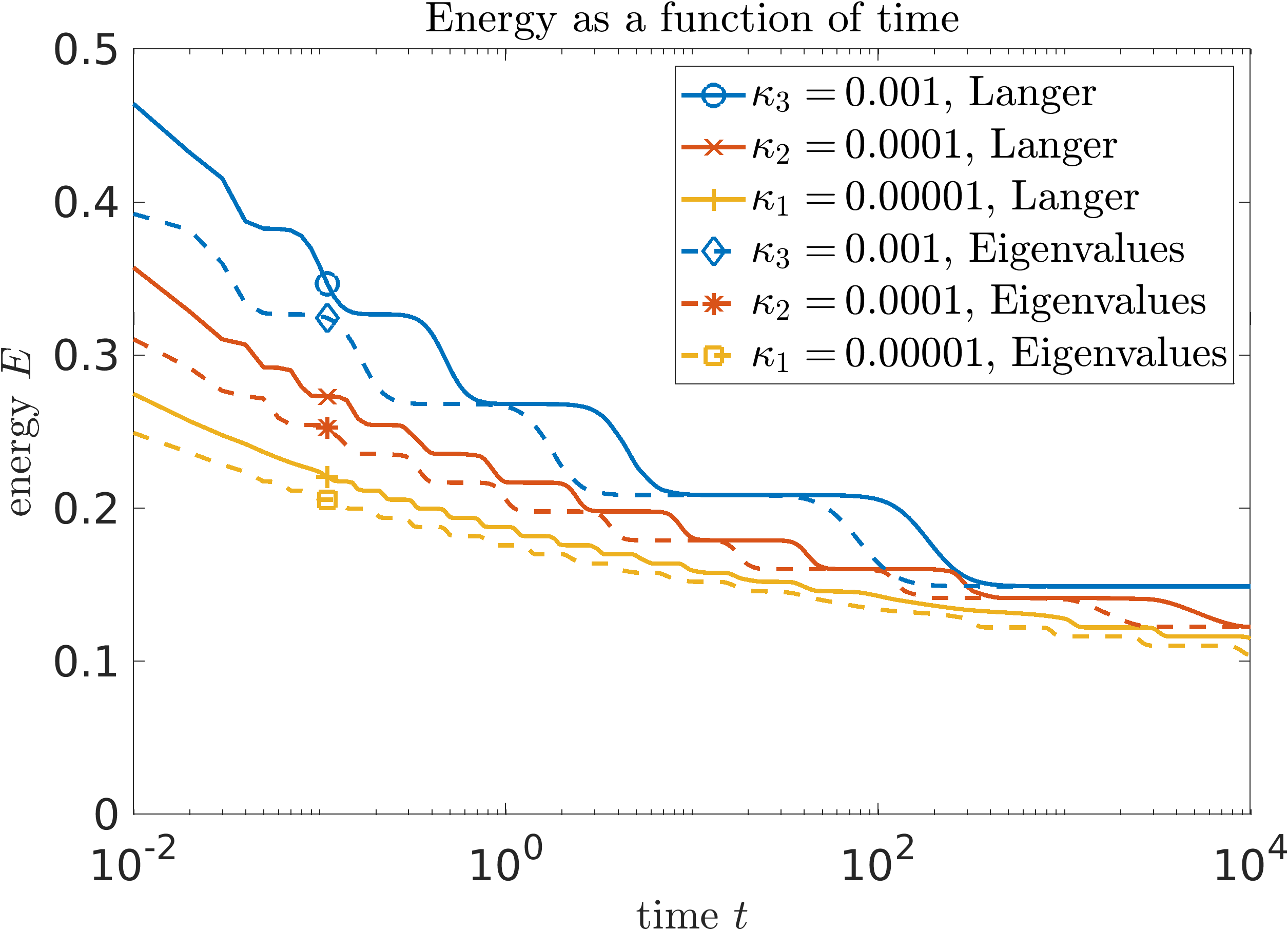}
\end{center}
\caption{Evolution of energies via Langer's method and the eigenvalue method for various values $\kappa$. 
\label{energies_kappa3}}
\end{figure}

\section{Computational Results}
\label{sec-computational-results}

\subsection{Overview of the numerical methods}
\label{overview-numerical-section}

Here, we give a brief summary of the numerical methods used throughout this analysis.  
As much as possible, we have used standard well-established methods, as our goal is not to 
focus on developing numerical methods, but on understanding dynamical phenomena.

All simulations were run using MATLAB version 2023a.  The spatial discretization was done using standard spectral methods, based on MATLAB's \texttt{fft} and \texttt{ifft} (the latter computed using the \texttt{{\textquotesingle}symmetric\textquotesingle} option).  Time-stepping was handled by a semi-implicit Euler method.  Namely, the nonlinear convection and advection terms were computed explicitly in physical space using co-location (i.e., multiplication in physical space).  For the Burgers equation, the linear term was handled implicitly by IMEX-type methods (i.e., simply dividing by the appropriate Helmholtz operator after discretization via Euler's method).  For the Cahn--Hilliard portion, Eyre's convex splitting method \cite{Eyre_1998,Eyre_1997} was employed (in particular, algorithm 5, the ``Linearly Stabilized Splitting Scheme'' proposed in \cite{Eyre_1997}), with the cubic term computed using co-location (i.e., multiplication in physical space).  Due to the presence of the cubic term, the highest half of the wave modes were dealiased (i.e., set to zero before being transformed back to physical space).  The coupling between the equations was handled monolithically, i.e., both equations updated in a single step.  Spatial resolution on the domain $[-1,1)$ was chosen to be $N=8192$, giving a spatial stride of $\Delta x=2\pi/N\approx7.6699\times10^{-4}$.  The time-step $\Delta t\approx 9.7656\times 10^{-5}$ was chosen to respect the advective and convective CFL conditions.  For simplicity, we chose $\alpha=\beta=K=1$. Our smallest choices of viscosity $\nu\geq0.006$ and interfacial energy $\kappa\geq0.00001$, were found experimentally using the resolution criterion that the energy spectra of dealiased modes of both $\phi$ and $v$ must be below machine precision ($\approx2.2204\times10^{-16}$) at all time steps after roughly time $t\approx 0.001$ (obviously, since we are starting with normally-distributed random values at each point, the initial data, and consequentially the first few time steps, are not expected to satisfy this criterion).

We often initialize our systems with ``random'' initial data, which has been evolved until the free energy is just below $0.99 E_{\max}$, our
tolerance taken somewhat arbitrarily to be $10^{-4}$, where $E_{\max}:=\frac{\beta^2 L}{4\alpha}$.  This is carried out in the following way.  First, for the purposes of easy replication of our results, we seed the random number generator (RNG) with seed \verb|0|, then we take normally-distributed mean-zero  data at each of the $N$ points in space with standard deviation \verb|0.1| (i.e., $\mathcal{N}(0,0.1)$).  The result is then transformed with the discrete Fourier transform, where the upper $N/2$ wave modes are removed (for dealiasing the cubic term).  For clarity, the MATLAB code used for this operation is as follows:
\begin{lstlisting}
    rng(0,'twister');
    phi_hat = fft(0.1*randn(1,N));
    phi_hat((N/4+1):(3N/4+1)) = 0;
    phi = ifft(phi_hat,'symmetric');
\end{lstlisting}
Here, we assume $N=2^p$ for some integer $p\geq2$ (we used $p=11$ for pure Cahn--Hilliard tests, and $p=13$ for Burgers--Cahn--Hilliard tests).  The RNG was only seeded by zero on the first trial.  On subsequent trials, the seed was determined by whatever the previous state of the system was.  After the initialization of $\phi$ described above, the simulation was then run until free energy satisfied the criterion stated above.  For this last stage, we used a variable time-step: if the free energy was not within the desired tolerance of $0.99 E_{\max}$, the time step was thrown out and recomputed with half the previous time-step.  Once this process was completed, the result was used to initialize our Cahn--Hilliard simulations.  For the initialization of the Burgers equation, we used either random Fourier coefficients, namely normally distributed $c_k\sim \mathcal{N}(0,1)+i\mathcal{N}(0,1)$ coefficients on wave moves\footnote{The cut-off $|k|\leq32$ was chosen somewhat arbitrarily, but in several tests at different cut-off numbers, no major qualitative differences were observed (except of course when the cut-off was near the Nyquist frequency $N/2$, leading to numerical artifacts than can be fixed by using higher resolution).  This is expected, since the viscosity quickly smooths the high frequencies.  Hence, to focus on important details, we did not display results for different cut-off frequencies for the initial velocity.  Similar remarks hold for other choices of parameters, such as choices for the standard deviations, etc.} $|k|\leq32$ satisfying $c_{-k}=\overline{c_k}$, or the following ``bump'' function times $x$ (for a mean-free compactly supported $C^\infty$ function): 
\begin{equation}\label{bump}
v_0(x)=
\begin{cases}
\frac{C}{L} x\exp\left(\frac{1}{(x/L)^2 - 1}\right)\,&-L<x<L,\\
0&\text{otherwise},
\end{cases}
\end{equation}
where $C=\left(\sqrt{2-\sqrt{3}}\exp(1/(1-\sqrt{3}))\right)^{-1}\approx7.5724$ was chosen so that $\|v_0\|_{L^\infty}=1$ (the max occurs at $x=\sqrt{2-\sqrt{3}}=\frac12(\sqrt{6}-\sqrt{2}))$.  For our simulations, we choose $L=1$.

\subsection{The Cahn--Hilliard Equation}
\label{uncoupled-system-section}
Having developed an analytical baseline for comparison in Section \ref{ch-coarsening}, we begin 
our computations with the uncoupled Cahn--Hilliard equation \eqref{ch}, with an emphasis on the energy 
evolution and associated coarsening dynamics.
In Figure \ref{energies_av}, we depict energies (Figure \ref{energies_av}a) and periods
(Figure \ref{energies_av}b) for 50 trials of the Cahn--Hilliard equation---trials in gray, 
average in thicker black---along with energies and periods computed with the two analytical methods
from Section \ref{ch-coarsening}. For the computationally generated values,  
initial data for each trial was computed pseudo-randomly as described  
in Section \ref{overview-numerical-section}, except with the random number generator seeded randomly 
each time, using \texttt{rng(randi(10000))}.  The free energy was computed for each trial at each 
time-step, and the mean of these curves was computed and displayed in Figure \ref{energies_av}a.  
This is the first direct quantitative comparison that we are aware of for the energies obtained from 
these three approaches, and we interpret the consistency of the results at once as a justification of 
Langer's approach in \cite{L71} (and subsequently as modified in \cite{H11}) and a verification that 
our computations are faithfully capturing dynamics of the energy. In Figure \ref{energies_av}b, 
we depict the evolution of the periods associated with the energies in Figure \ref{energies_av}a. 
For the two analytic methods, these periods are computed directly from \eqref{langer_period} (Langer's 
method) and \eqref{eigenvalue-ode-langer} (eigenvalue method), while for the computational approach the periods
are computed from the energies via the pseudoinverse as specified in Definition \ref{pseudoinverse}
(though see the caption of Figure \ref{energies_av} for a note on this). 

As a final comment to this section, we note that while the two analytically obtained energy curves
in Figure \ref{energies_av}a both remain close to the mean curve obtained by computation, 
the method of \cite{H11} yields a curve that remains moderately closer over intermediate times. 

\begin{figure}[ht] 
\begin{subfigure}[t]{0.49\textwidth}
\includegraphics[width=1\textwidth]{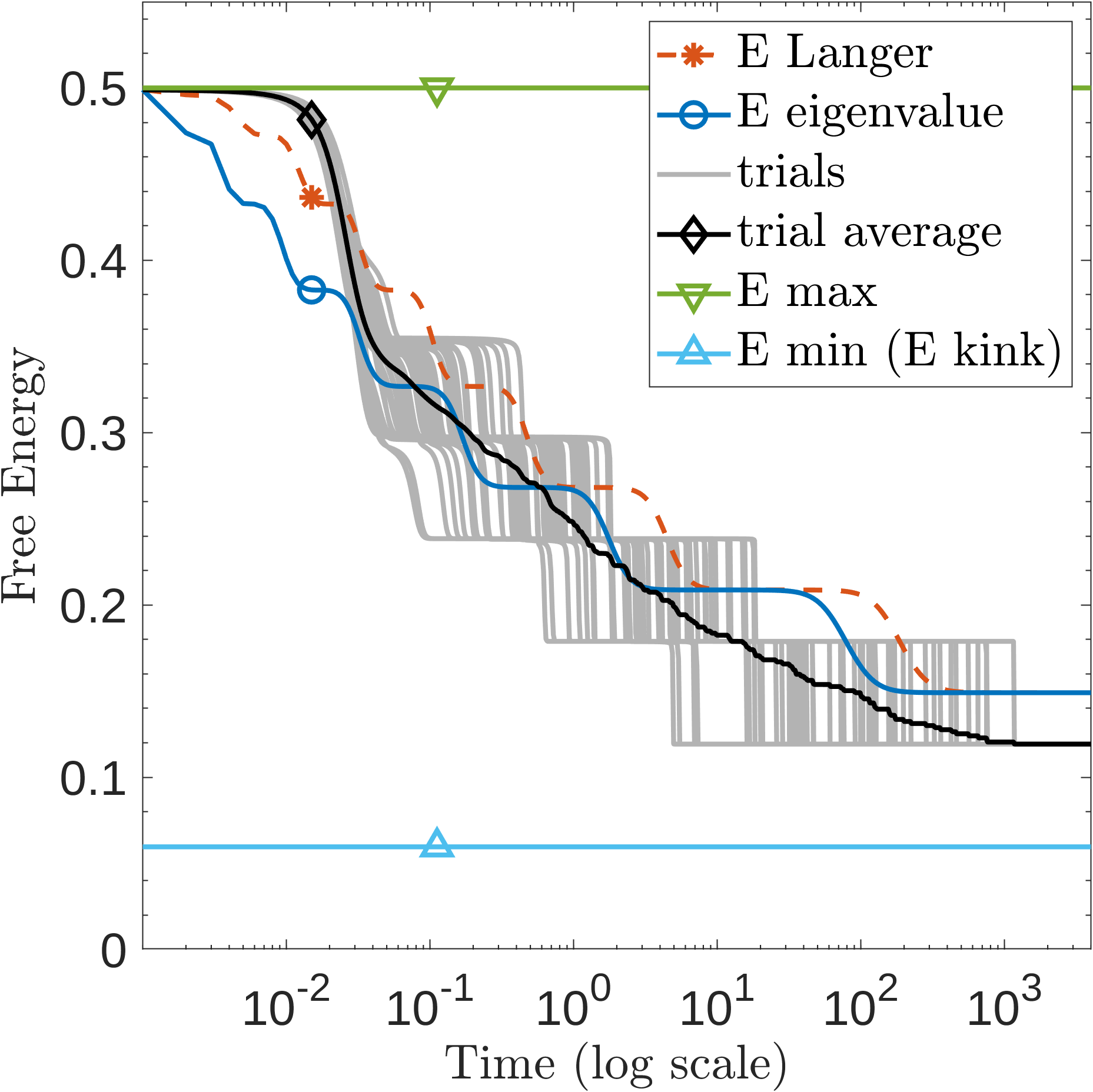}
\caption{\label{fig_En_av} (linear-log scale) Free Energy of $\phi$ vs. time}
\end{subfigure}
\begin{subfigure}[t]{0.48\textwidth}
\includegraphics[width=1\textwidth]{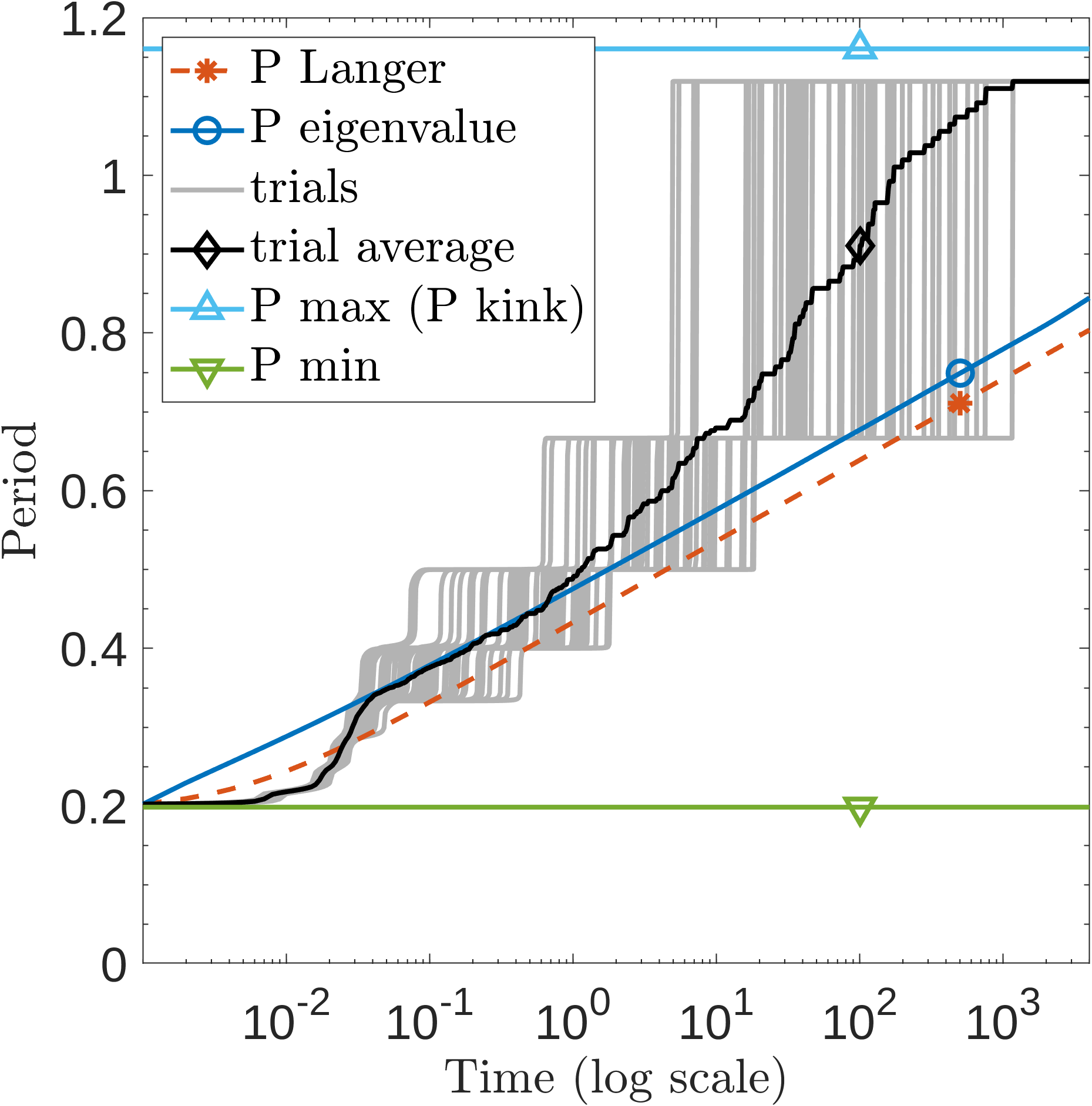}
\caption{\label{fig_per_av}  (linear-log scale) Period of $\phi$ vs. time.}
\end{subfigure}
\caption{\label{energies_av}
Free energy and periods of 50 trials of the (uncoupled) Cahn-Hilliard equation ($\kappa=0.001$, $\alpha=\beta=1$, $N=2^{11}=2048$, Domain: $[-1,1]$, $\Delta t=0.001$, final time $T=4000$), and the average of these, along with the maximum free energy \eqref{maximum-energy}, the kink energy \eqref{E_kink}, and the Langer and eigenvalue predictions.
In principle, the computational plots in Figure (b) should be generated by applying the pseudoinverse map 
from Definition \ref{pseudoinverse} to the plots in Figure (a), but in practice we found it 
more efficient to generate a map by interpolating periods as a function of energies. It is clear 
from our discussion in Section \ref{MeasureofCoursening} that the two approaches must give
nearly identical results.} 
\end{figure}

\subsection{Dynamics of the coupled vs. uncoupled systems}\label{coupled-system-section}

In this section, we examine the dynamics of the coupled system \eqref{pre-main} in comparison with the uncoupled system (that is, \eqref{pre-main} with $K=0$ and the convection term $v\phi_x$ removed).  The purpose is to get a better understanding of how coupling affects the dynamics, and 
especially the coarsening rates. In order to accomplish this, we will numerically generate solutions
to both the coupled and uncoupled systems, and carefully study differences between the resulting dynamics. 
As a starting point, we consider an example in which the phase variable $\phi$ is initiated 
as a small random perturbation of the homogeneous state $\phi \equiv 0$, and velocity is randomly
generated as well. (See the top left panel in Figure \ref{fig_coupled_sol1} for depictions of $\phi$
and $v$ at $t = 0$.) We show the time evolution of these solutions in Figure \ref{fig_coupled_sol1}, 
and the corresponding Fourier spectra in Figure \ref{fig_coupled_spec}. 

\begin{figure}[ht] 
\includegraphics[width=0.48\textwidth]{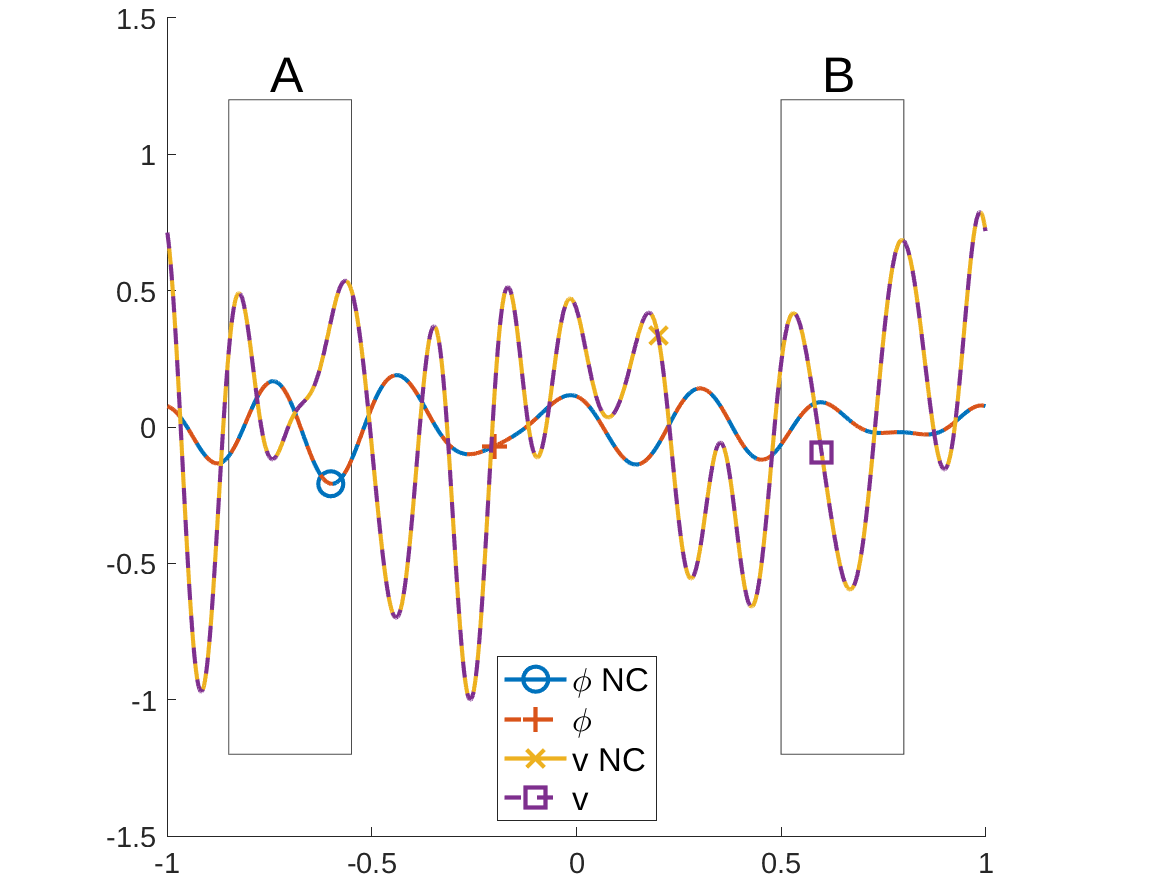}
\includegraphics[width=0.48\textwidth]{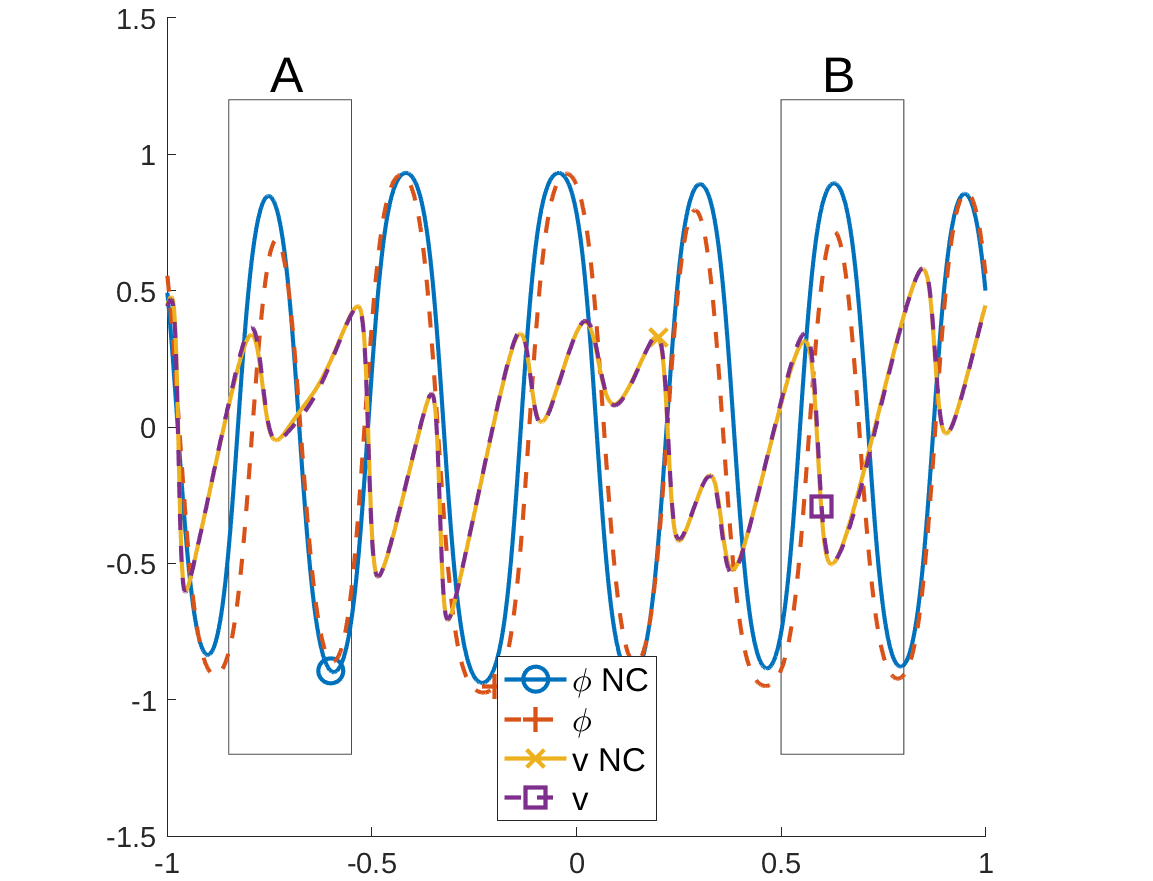}
\includegraphics[width=0.48\textwidth]{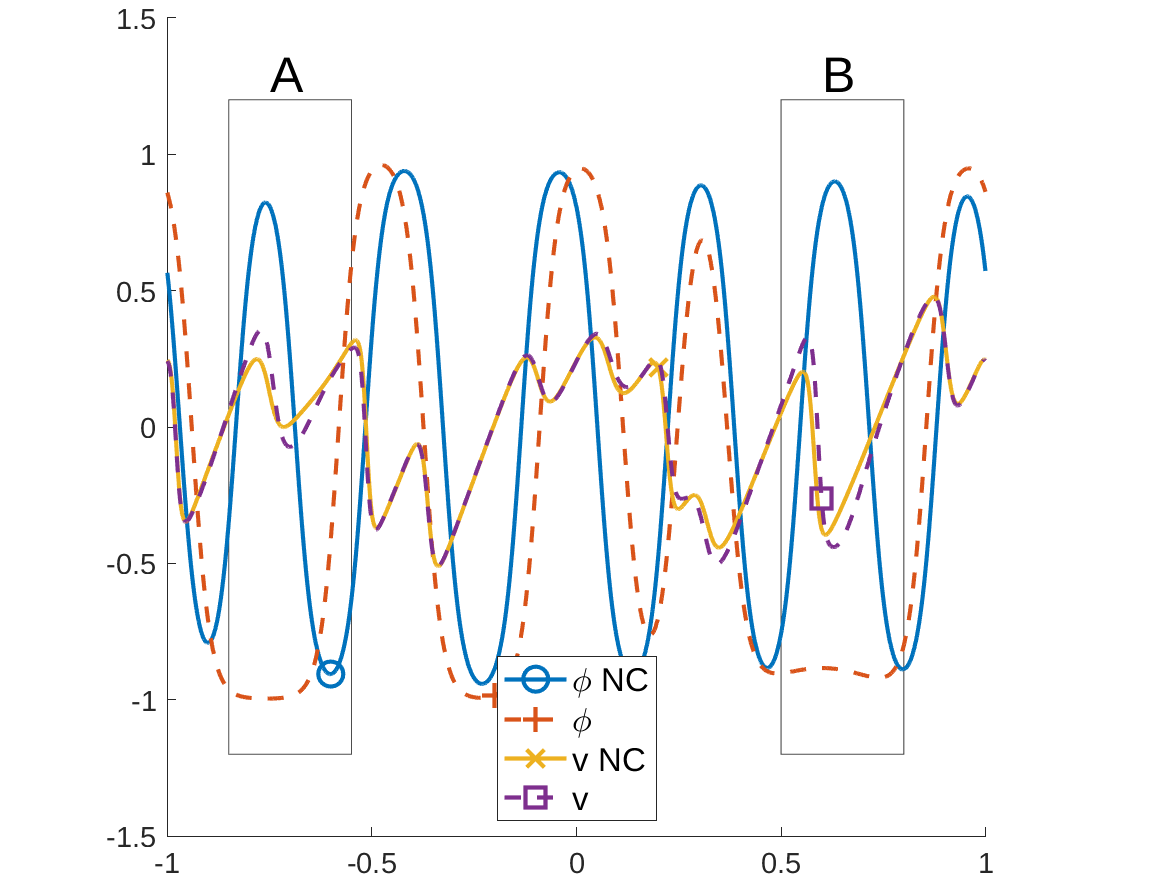}
\includegraphics[width=0.48\textwidth]{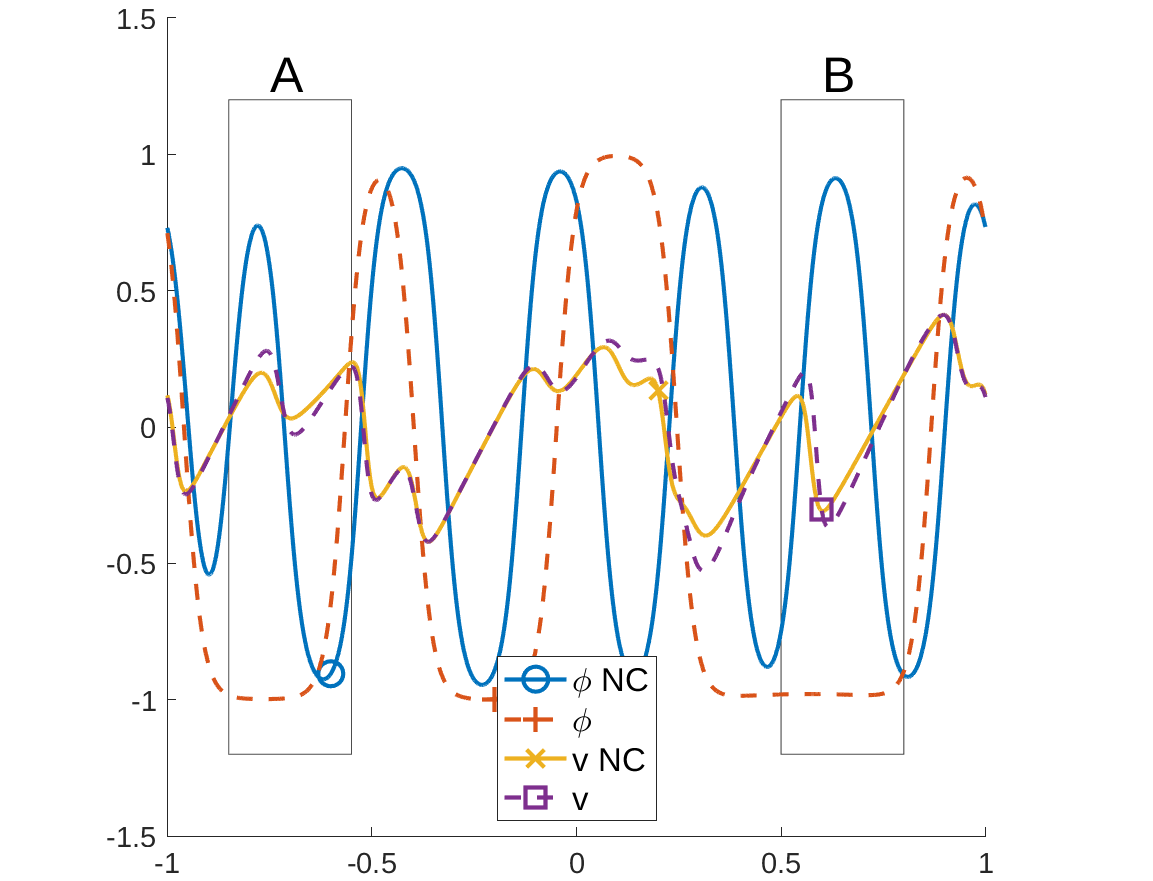}
\includegraphics[width=0.48\textwidth]{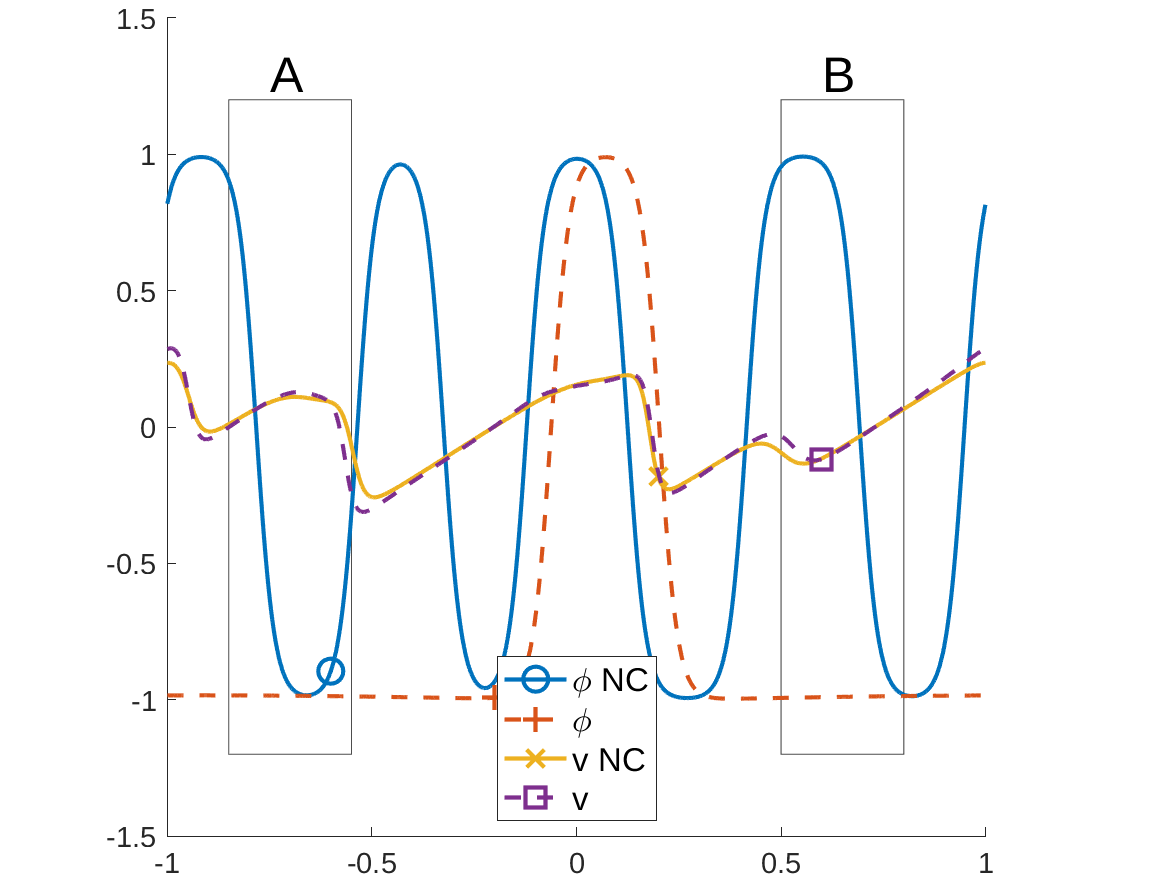}
\includegraphics[width=0.48\textwidth]{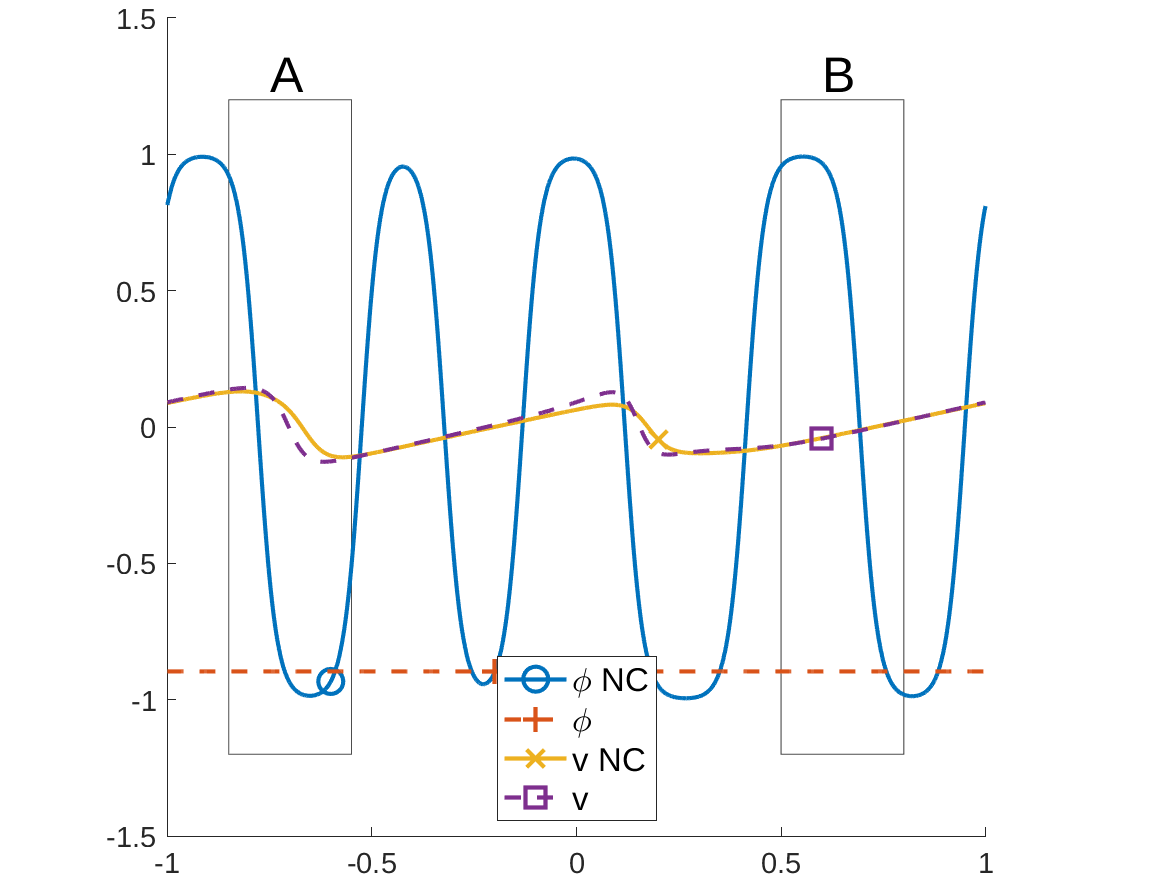}
\caption{\label{fig_coupled_sol1}Solution plots for \eqref{pre-main} over times $t\approx0.0$, $0.1$, $0.2$, $0.3$,  $1.0$, and $3.0$ (left to right and then top to bottom). ``NC'' indicates the system with no coupling (i.e., $K = 0$ and no convection). The horizontal axis is the spatial $x$-axis.
Solutions were computed with parameter values $\kappa = 0.001$, $\alpha = 1$, and $\beta = 1$, 
and in the case of coupling with additionally $\nu = 0.006$ and $K = 1$.}
\end{figure}

Qualitatively, the dynamics that we observe under coupling are driven by two primary mechanisms.
The first of these is the tendency of the phase variable $\phi (x,t)$ to rapidly evolve in such 
a way that enriched regions in which one component of the mixture dominates (i.e., $|\phi (x, t)| \cong 1$)
are separated by steep transition layers, and the second is the convective effect of the coupling term $v \phi_x$ on these
regions, namely if $v$ increases from one transition layer to the next then the trailing transition 
layer moves more slowly than the leading layer so that the layers separate, while if $v$ decreases
from one transition layer to the next a similar effect presses the layers closer together. Ancillary 
to this, the coupling term $K\mu \phi_x$ induces a transfer of energy from the phase variable 
to the velocity, especially pronounced when an enriched region is eliminated. 

Referring specifically now to the evolution depicted in Figure \ref{fig_coupled_sol1}, we will 
refer to the six sub-figures as \ref{fig_coupled_sol1}a through \ref{fig_coupled_sol1}f, labeled
left to right and then top to bottom. We see in Figure \ref{fig_coupled_sol1}b that coarsening
of the phase variable for the uncoupled and coupled systems are quite close for sufficiently 
short times. Nonetheless, if we focus on region A in Figure \ref{fig_coupled_sol1}b we see 
that for the coupled dynamics the negative velocity gradient (higher on the left phase transition 
than the right) compresses 
the positively enriched region to a lower wavelength and correspondingly reduced amplitude. 
At the same time, the positive velocity gradient between the left-most transition layer
and the transition layer at the far right of region A serves to pull the layers apart. In 
Figure \ref{fig_coupled_sol1}c we see that as a consequence of this dynamic, the positively
enriched portion of region A is entirely eliminated. Likewise, a similar dynamic occurs in region B,
and again a positively enriched region is entirely eliminated. In both cases, the elimination 
of transition layers removes energy from the phase variable, and we see from the increased
separation between the coupled and uncoupled velocity profiles that this energy is transferred
to the velocity. 

In Figure \ref{fig_coupled_sol1}c, we observe a similar dynamic, where in this case a negative 
velocity gradient compresses the sides of a negatively enriched region, which has then been 
eliminated in Figure \ref{fig_coupled_sol1}d. This same general dynamic occurs twice more in 
the transition from Figure \ref{fig_coupled_sol1}d to Figure \ref{fig_coupled_sol1}e, and one 
final time in the transition from Figure \ref{fig_coupled_sol1}e to Figure \ref{fig_coupled_sol1}f.
We notice in particular that while the uncoupled dynamics are extremely slow at this point, 
the coupled dynamics have completed their evolution. 

As a companion to Figure \ref{fig_coupled_sol1}, we include in Figure \ref{fig_coupled_spec}
corresponding spectral plots, verifying that the resolution of our discretization is sufficient.  

\begin{figure}[ht] 
\includegraphics[width=0.48\textwidth]{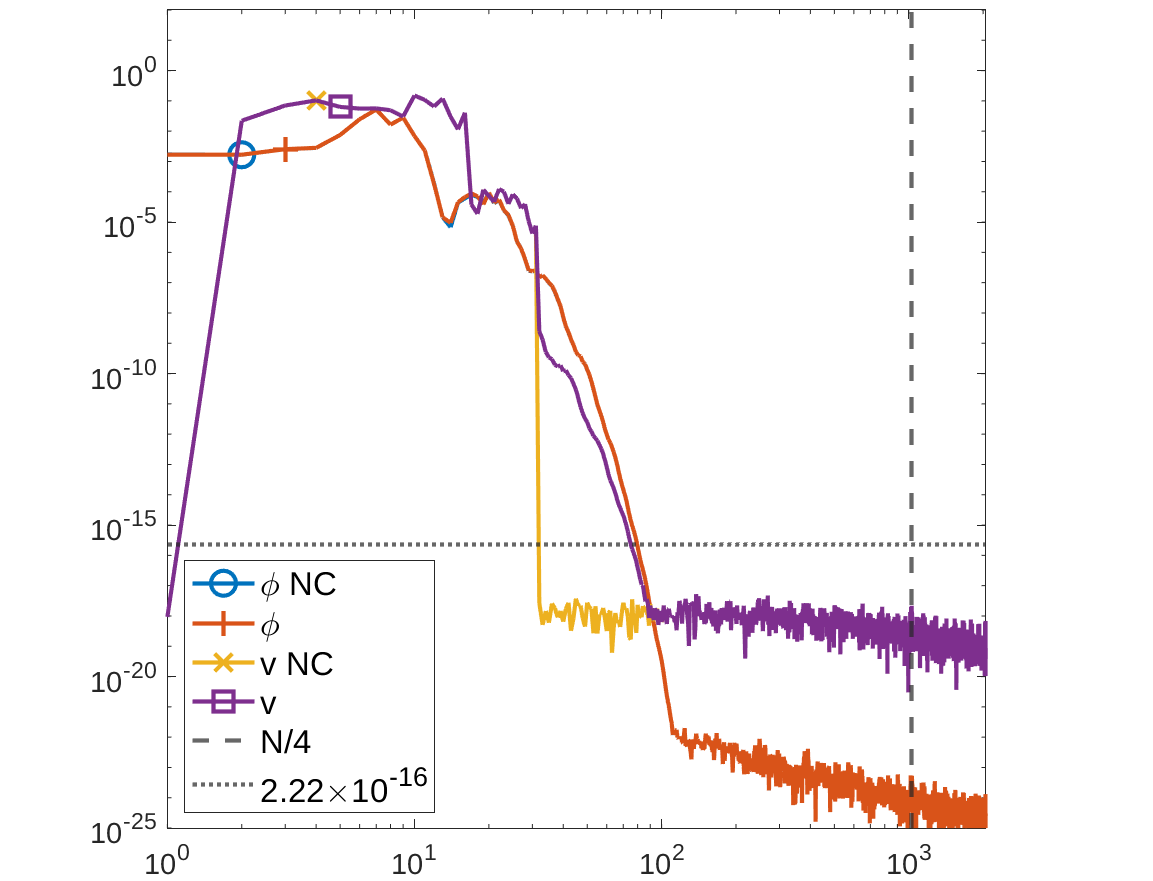}
\includegraphics[width=0.48\textwidth]{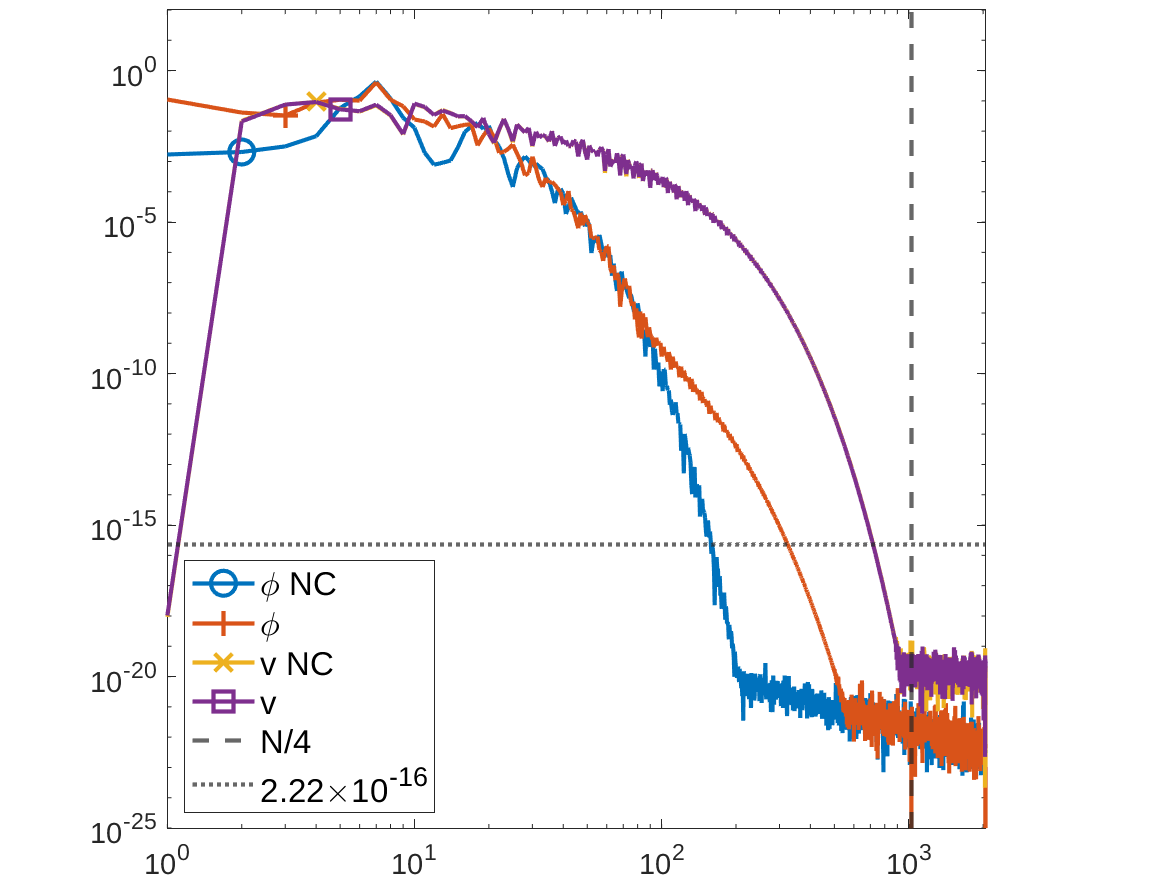}
\includegraphics[width=0.48\textwidth]{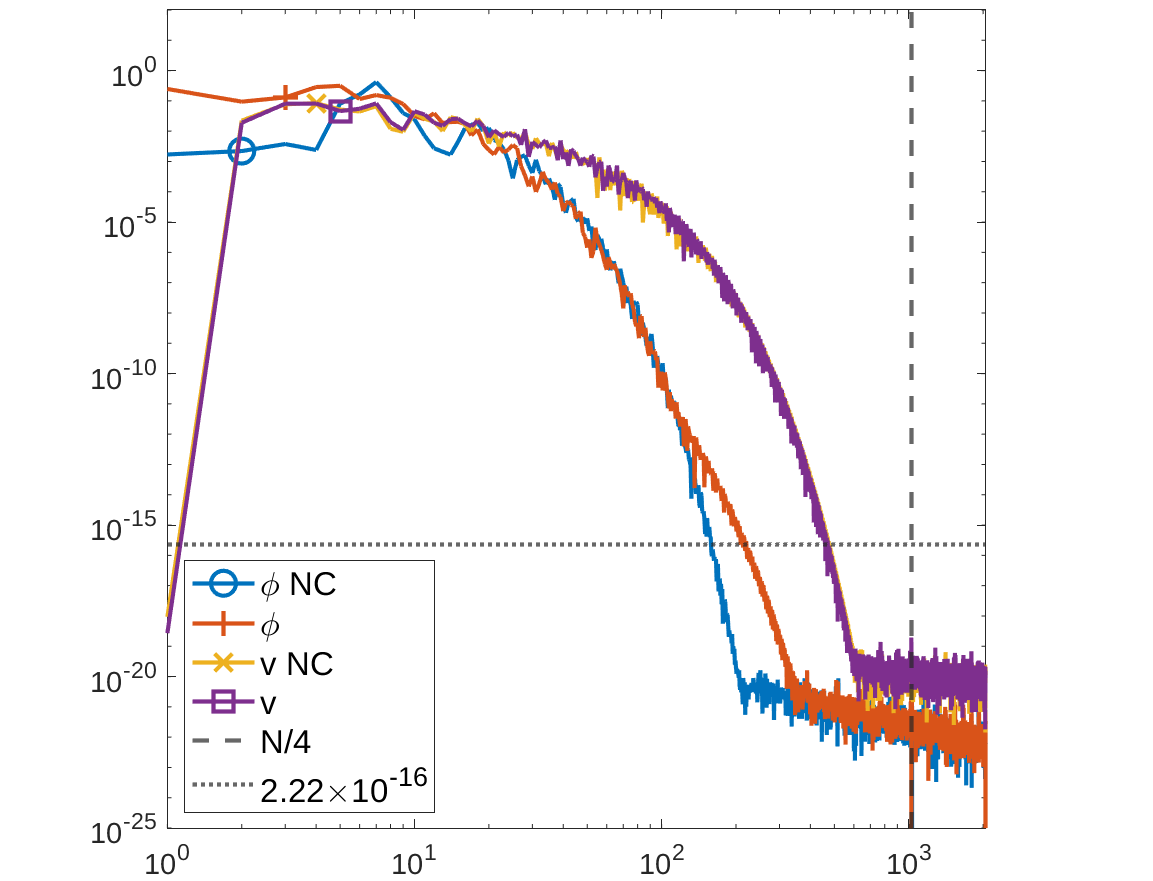}
\includegraphics[width=0.48\textwidth]{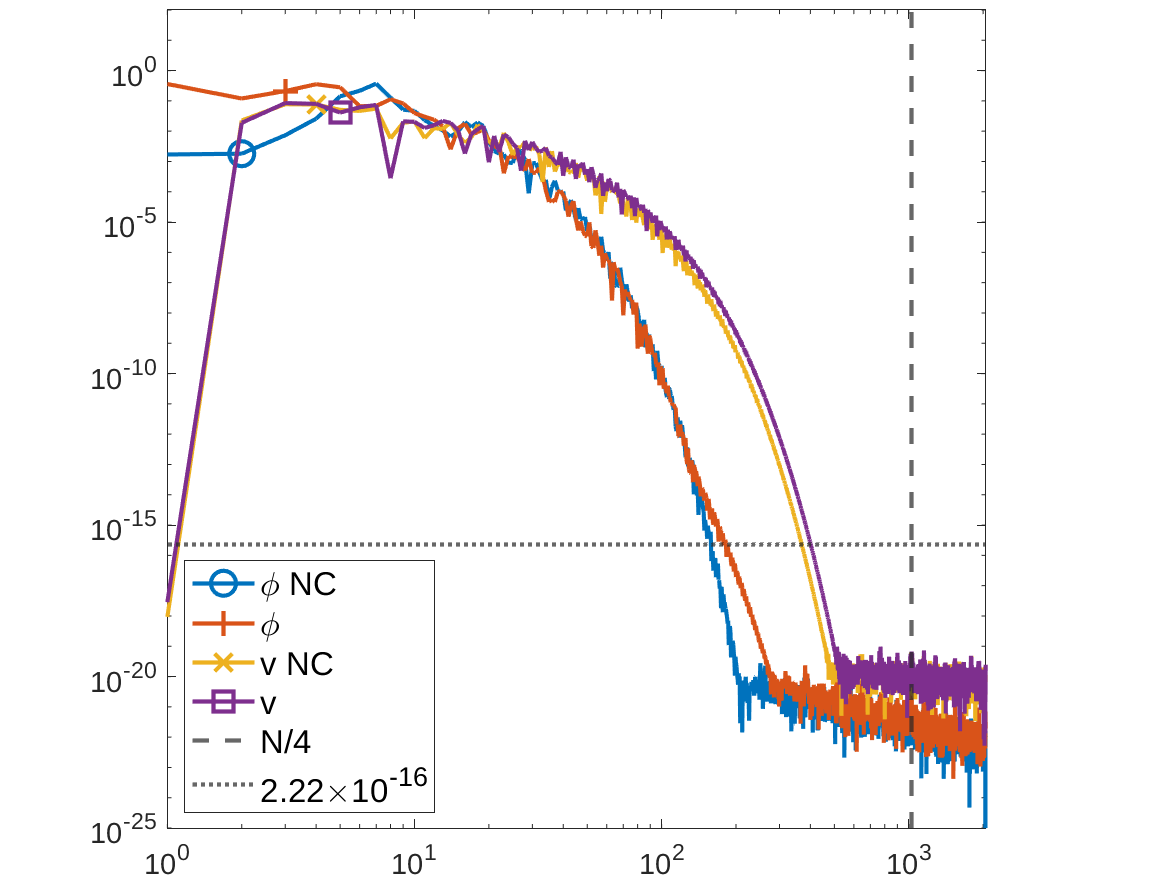}
\includegraphics[width=0.48\textwidth]{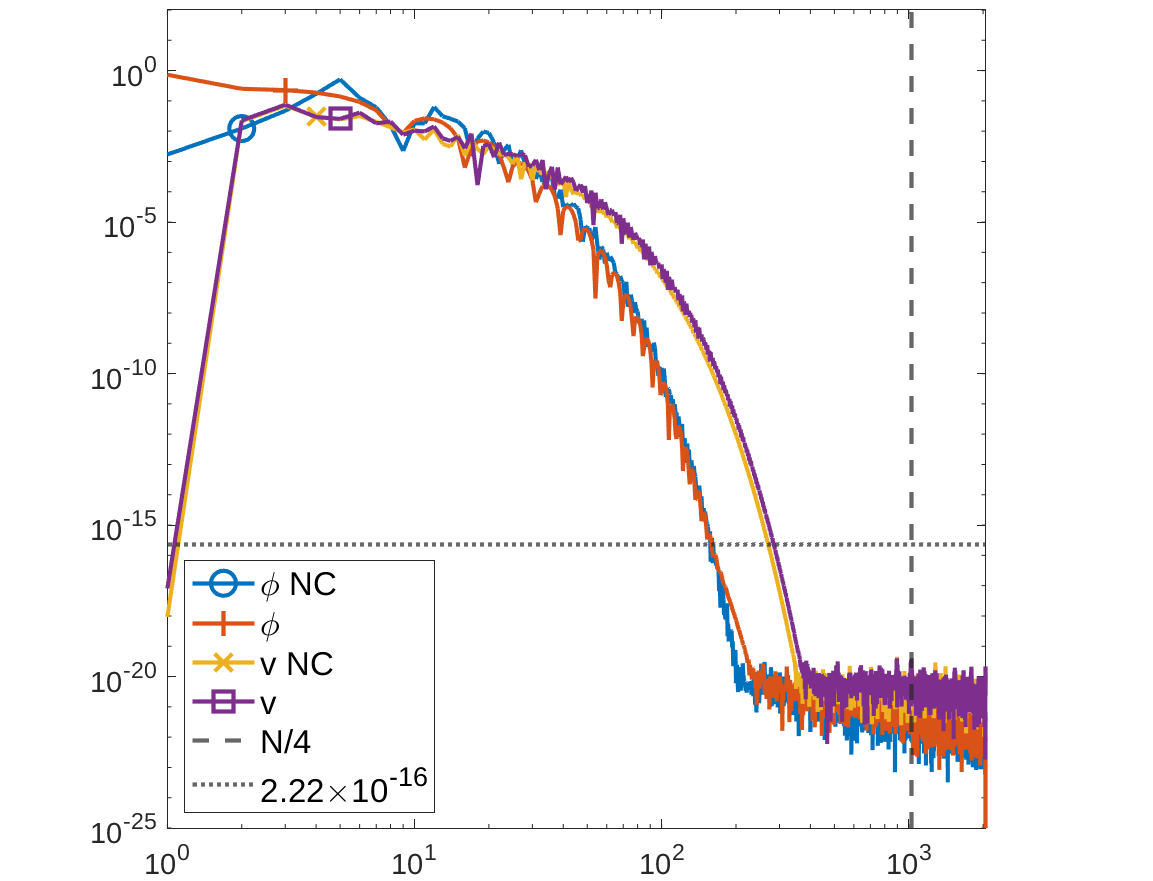}
\includegraphics[width=0.48\textwidth]{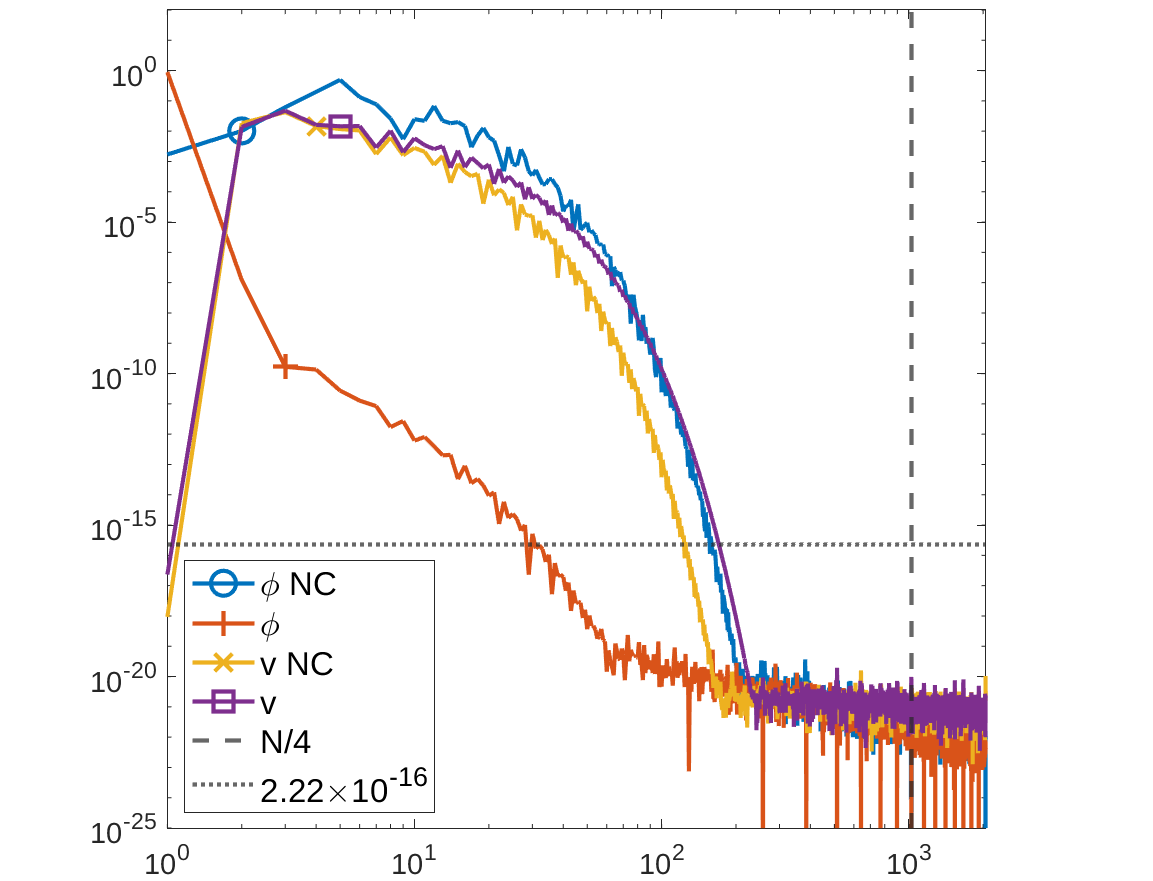}
\caption{\label{fig_coupled_spec}Spectrum plots over times $t\approx0.0$, $0.1$, $0.2$, $0.3$,  $1.0$, 
and $3.0$ (left to right and then top to bottom). ``NC'' indicates the system with no coupling.}
\end{figure}

Turning now to our second example, we again initiate the phase variable with a randomized 
perturbation of the homogeneous state $\phi \equiv 0$, but for this example we initialize the velocity profile 
with a simple zero-mass cycle. In this case, the dynamics are depicted in Figure \ref{fig_coupled_sol2}, 
where again we will use sub-labeling ``a'' through ``f'' to designate the images going left to right and top to 
bottom. As before, the initial dynamics are rapid in both the coupled and uncoupled cases, but in the 
coupled case a consistent positive velocity gradient is created, expanding the enriched regions, and 
since the velocity profile is nearly linear the expansion is greater for wider enriched regions. In 
Figure \ref{fig_coupled_sol2}b, we see four enriched regions, with narrower regions at the far left and 
far right and two wider regions in the middle. The wider regions dominate, and the narrower regions
collapse, leading to Figure \ref{fig_coupled_sol2}c, in which only two wider regions remain. At this point,
the region on the left is substantially wider than the region on the right, and dominates the dynamics
so that in Figure \ref{fig_coupled_sol2}d the enriched region on the right is clearly collapsing. Notably,
the velocity gradient is reduced by this point, and so the dynamics have slowed down, and it is not
until Figure \ref{fig_coupled_sol2}f (at time $t = 6.0$) when the enriched region on the right fully 
collapses and only a single enriched region remains. Again, with each loss of enrichment in the phase
variable, we see a transfer of energy to the velocity, as indicated by ridges forming between the 
coupled and uncoupled velocities. 

\begin{figure}[ht] 
\includegraphics[width=0.48\textwidth]{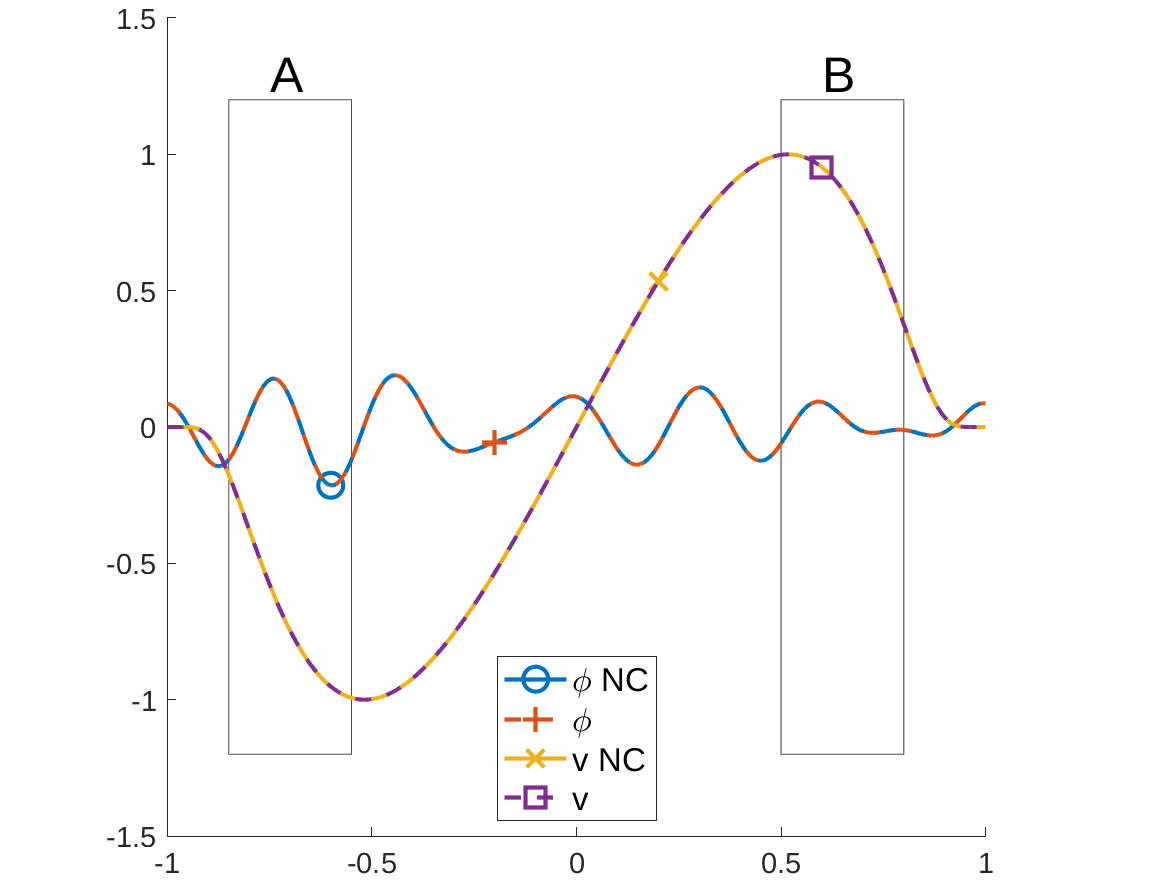}
\includegraphics[width=0.48\textwidth]{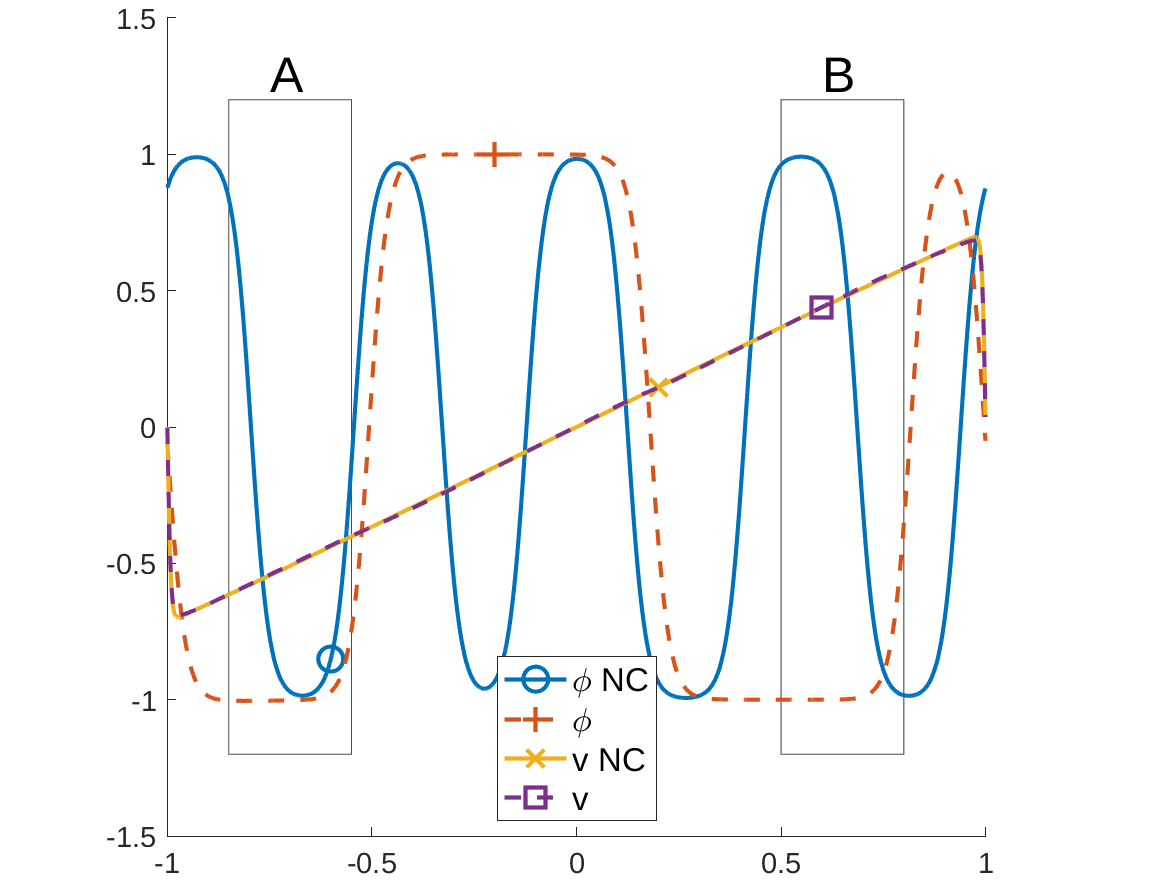}
\includegraphics[width=0.48\textwidth]{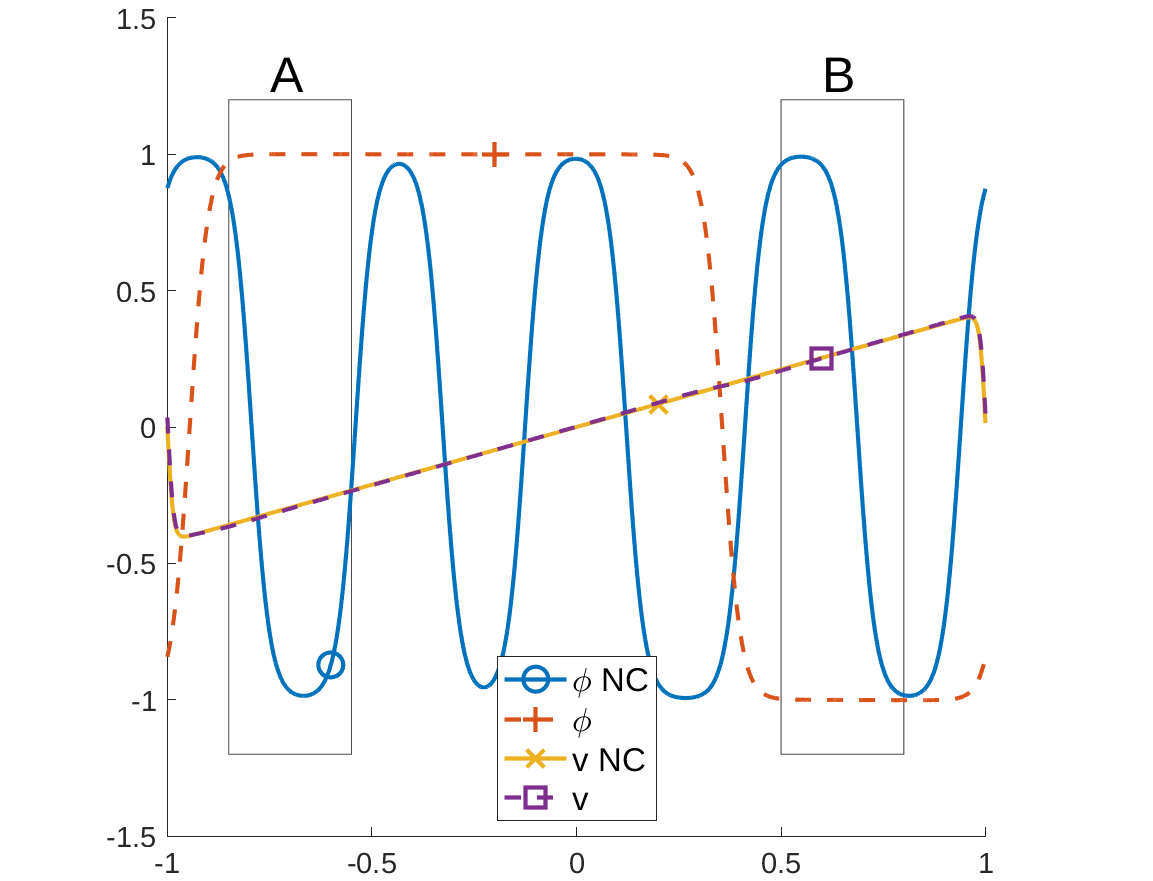}
\includegraphics[width=0.48\textwidth]{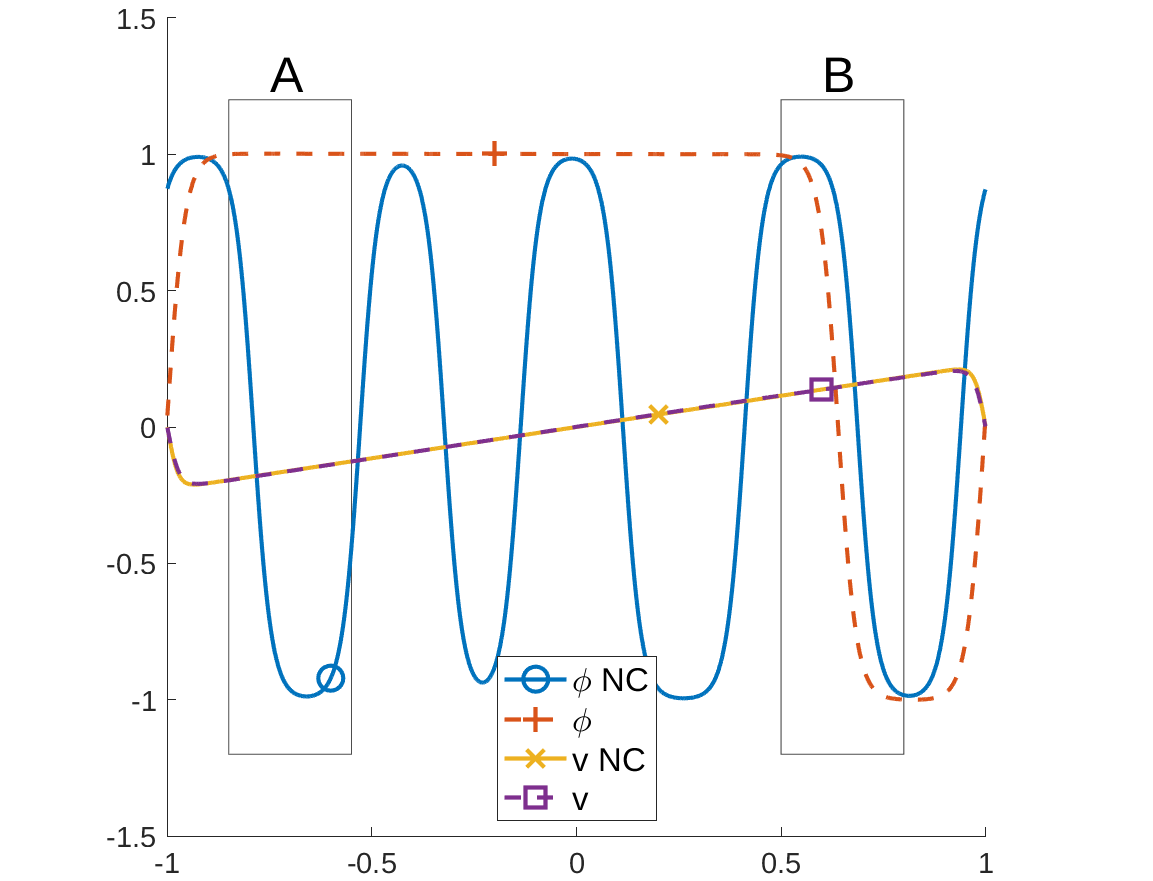}
\includegraphics[width=0.48\textwidth]{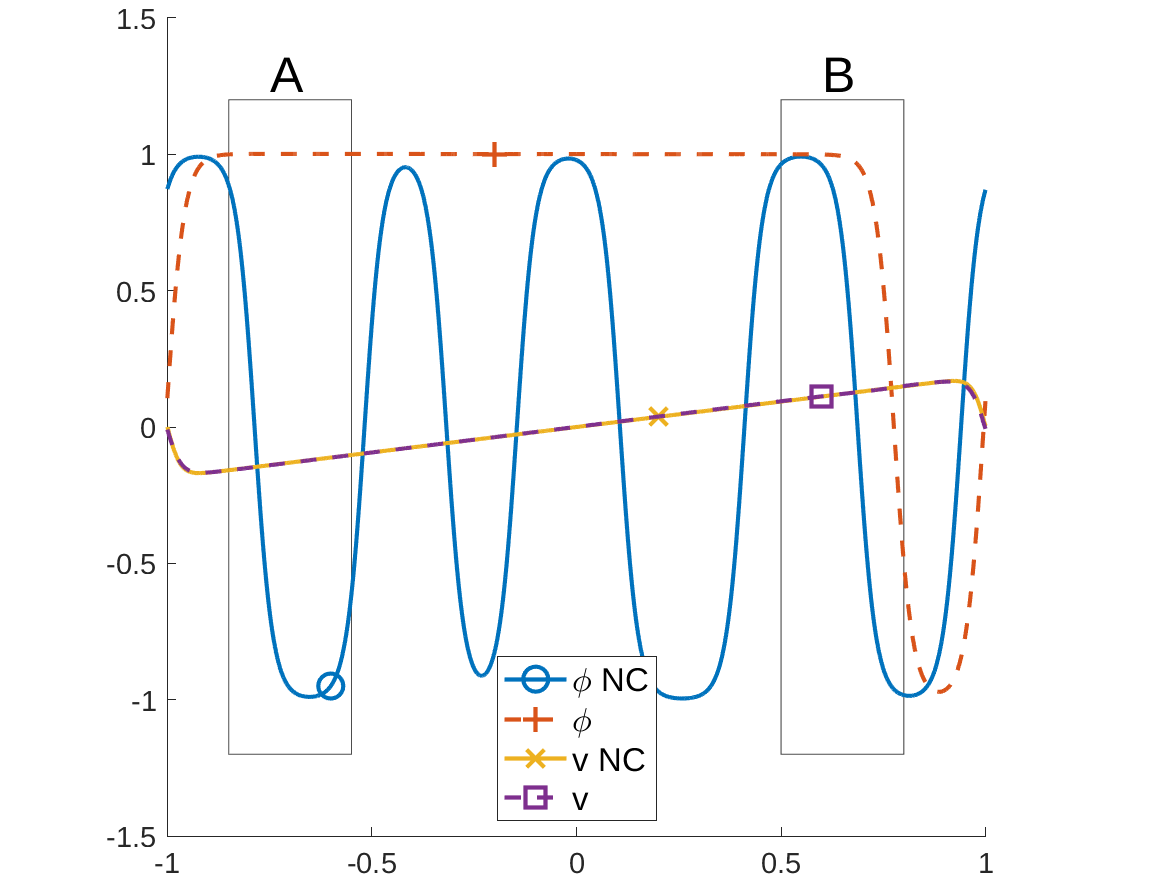}
\includegraphics[width=0.48\textwidth]{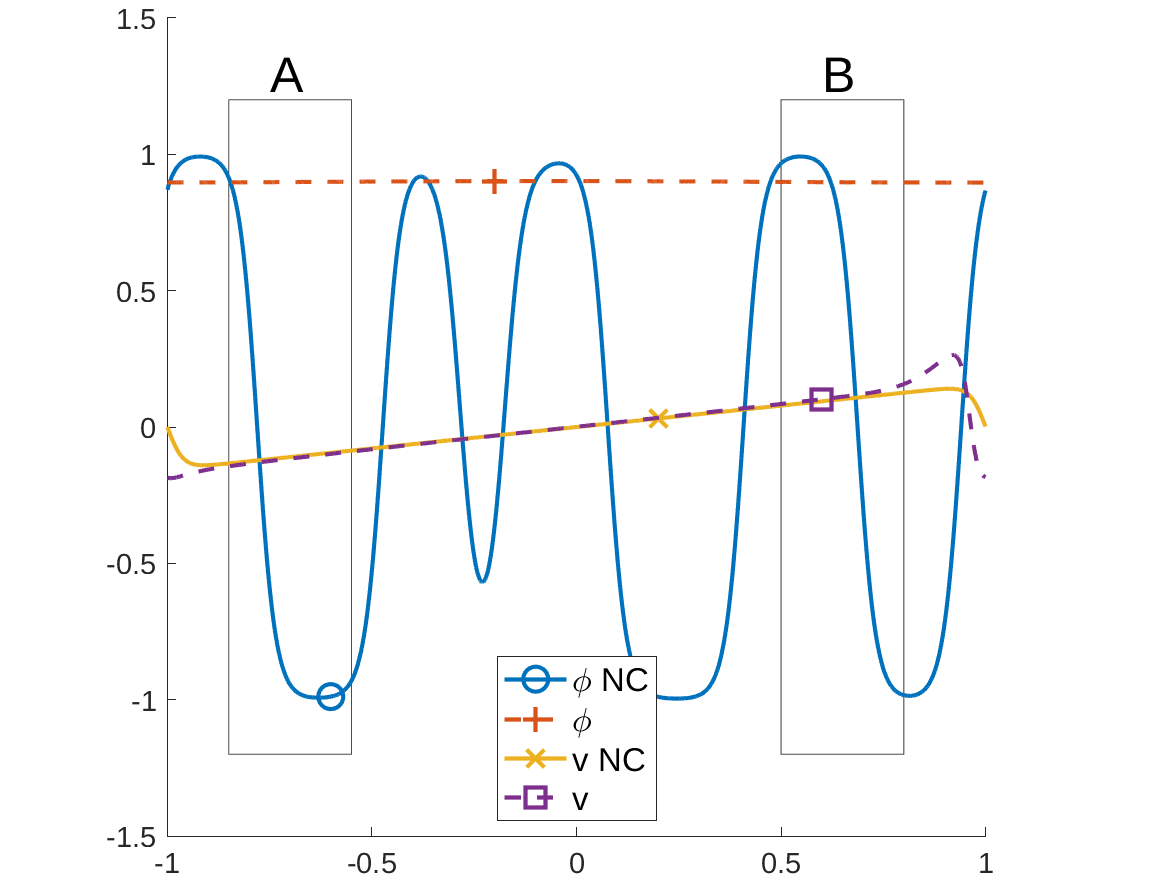}
\caption{\label{fig_coupled_sol2}Solution plots for \eqref{pre-main} with initial velocity $v_0(x)=Cx\exp(\frac{1}{x^2-1})$ over times for $t\approx0.0$, $1.0$, $2.0$, $4.0$,  $5.0$, and $6.0$ (left to right and then top to bottom). ``NC'' indicates the system with no coupling (i.e., $K = 0$ and no convection). The horizontal axis is the spatial $x$-axis. Solutions were computed with parameter values $\kappa = 0.001$, $\alpha = 1$, and $\beta = 1$, 
and in the case of coupling with additionally $\nu = 0.006$ and $K = 1$.}
\end{figure}

The dynamics depicted in Figures \ref{fig_coupled_sol1} and \ref{fig_coupled_sol2} 
suggest that coupling increases the rate of coarsening, and we would next like to 
quantify this effect. In Figure \ref{fig_periods_all}, we plot coarseness values
as a function of time for both the coupled and uncoupled equations, and note from 
the semilog plot on the left that while the rate of coarsening is significantly 
faster in the coupled case, it still appears to be at a logarithmic rate. 
In order to make this observation more precise, we consider a slight adjustment 
to Langer's formula \eqref{langer_period}, keeping the logarithmic form,
\begin{align}\label{P_fit}
P_\text{fit}[c_1,c_2](t) &= p_0 + c_1\sqrt{\frac{2\kappa}{\beta}}\log\left(1+\frac{t}{c_2}\frac{16\beta^2}{\kappa}\exp\left(-\frac{p_0}{\sqrt{2\kappa/\beta}}\right)\right),
\end{align}
where $c_1$ and $c_2$ can be found by optimizing a log-weighted $L^2$ norm of the error on the time interval $[0,20]$; that is,
\begin{align}\label{argmin}
(c_1,c_2) = \argmin_{c_1,c_2}\int_0^{20} |P_\text{fit}[c_1,c_2](t) - P(t)|^2\,\frac{dt}{\log(1+t)}.
\end{align}
For the case of initial velocity given by random Fourier coefficients (as described above), we found $c_1\approx5.70901$, $c_2\approx5.6991$, while for the case of initial velocity given by \eqref{bump}, we found $c_1\approx6.11532$, $c_2\approx4.92613$, indicating a very significant increase in the coarsening rate.  The results are depicted in Figure \ref{fig_periods_all}, where one can see that, for example, a period of roughly $p\approx1.12$ was reached using the bump-function initial velocity by time $t\approx0.25$, by the random Fourier coefficient initial data by time $t\approx0.34$, but the uncoupled equation does not reach this period until time $t\approx76.0$.  Moreover, the next ``merger'' corresponding to a period of $p\approx 1.495$ was reached using the bump-function initial velocity by time $t\approx1.52$, by the random Fourier coefficient initial data by time $t\approx3.43$, but the uncoupled equation does not reach this period even by time $t=2000$ (!).  In fact, extrapolating using \eqref{langer_period}, $p\approx 1.495$ will not be reached by the uncoupled system until roughly $t\approx2.06\times 10^{10}$ (!!).

\begin{figure}[ht] 
\begin{center}
\includegraphics[width=0.48\textwidth]{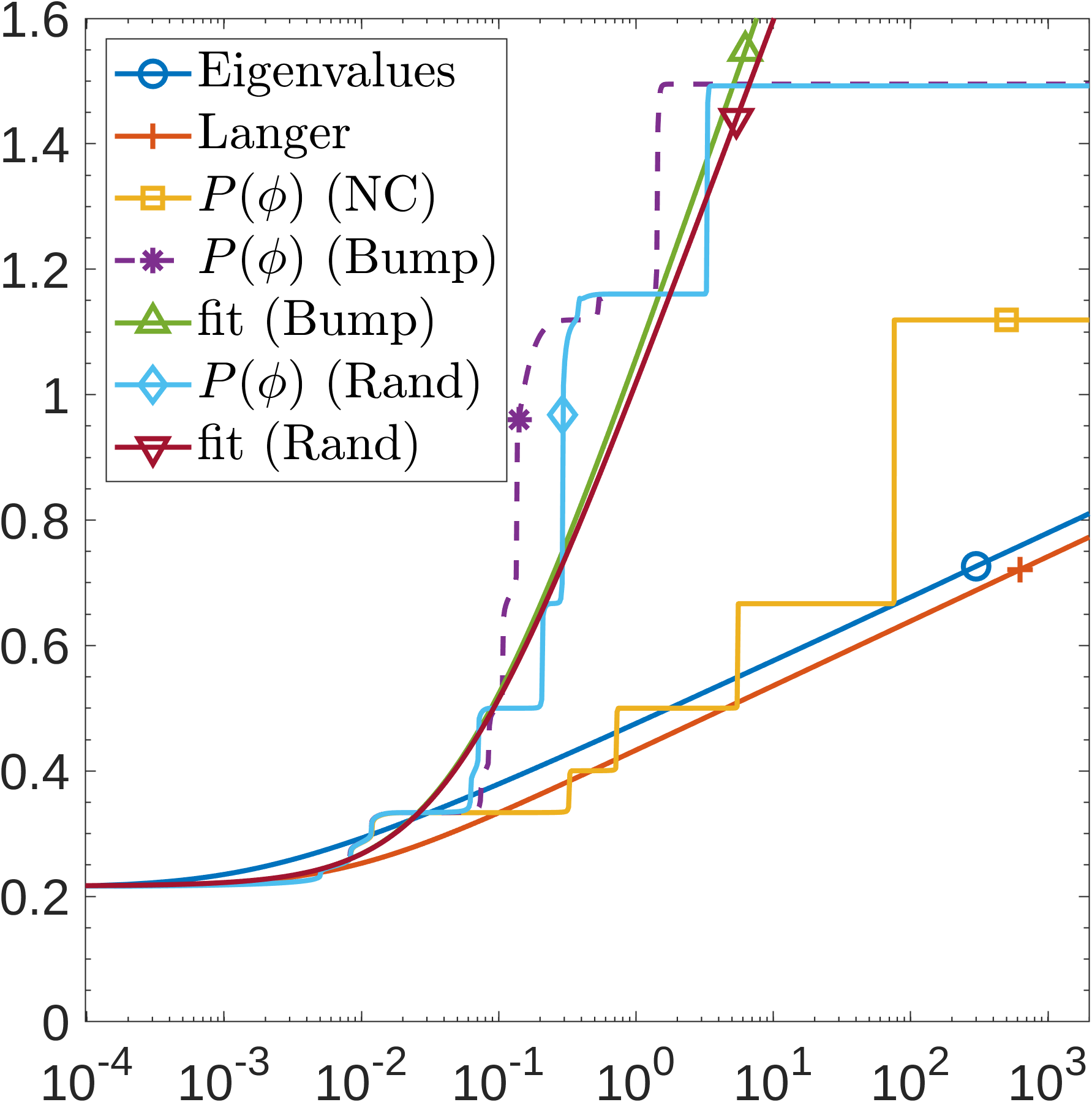}
\includegraphics[width=0.495\textwidth]{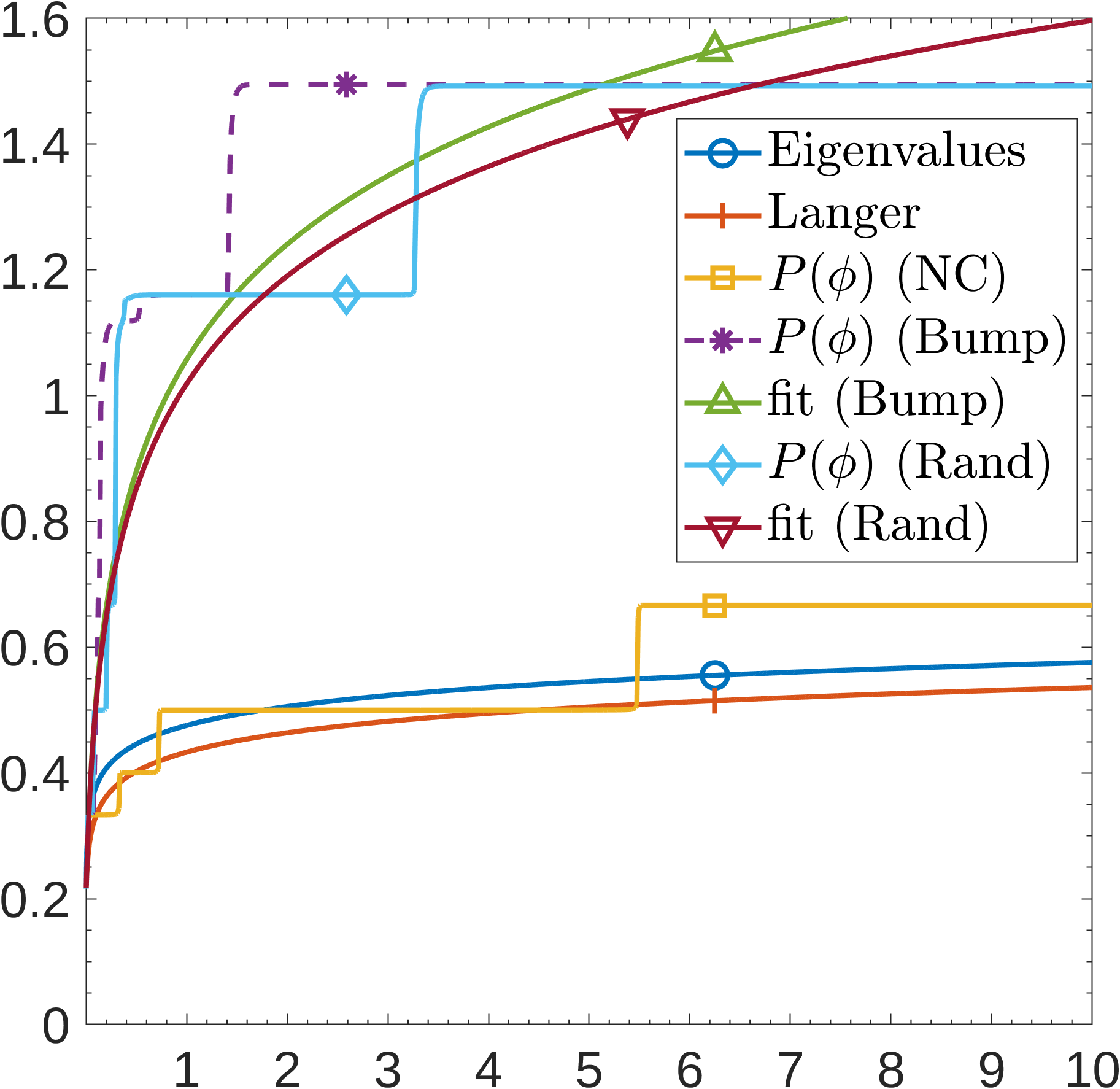}
  \end{center}
\caption{\label{fig_periods_all} Evolution of periods via direct simulation, and also predictions by Langer's method and the eigenvalue method.  The ``fit'' functions are given by \eqref{P_fit} with coefficients $c_1$ and $c_2$ determined by \eqref{argmin}. (left) Time interval $[0,2000]$ on linear-log plot. (right) Zoom in on time interval $[0,10]$ on linear-linear plot.}
\end{figure}

\FloatBarrier

We conclude this section with a comparison of energy dynamics for the coupled and 
uncoupled systems. For this, we compute energies associated with the solutions 
depicted in Figure \ref{fig_coupled_sol1}, along with $\|v\|_{L^2}$, which serves
as a measure of kinetic energy stored in the velocity (see Figure \ref{fig_norms}a). 
As expected, we see that for short times the energies of the coupled and uncoupled 
systems remain quite close, but that they separate substantially starting near $t = 0.3$, which 
corresponds with Figure \ref{fig_coupled_sol1}d. 
After separation, the energy for the coupled system decays much more rapidly than
the energy for the uncoupled equation. We observe that each sharp decline in the 
Cahn--Hilliard energy is accompanied by an incline in the kinetic energy, due to 
the energy transfer. In Figure \ref{fig_norms}b, we include $H^1$ norms of $\phi$
and $v$ in both the coupled and uncoupled cases. 

\begin{figure}[ht] 
\begin{subfigure}[t]{0.49\textwidth}
\includegraphics[width=1\textwidth]{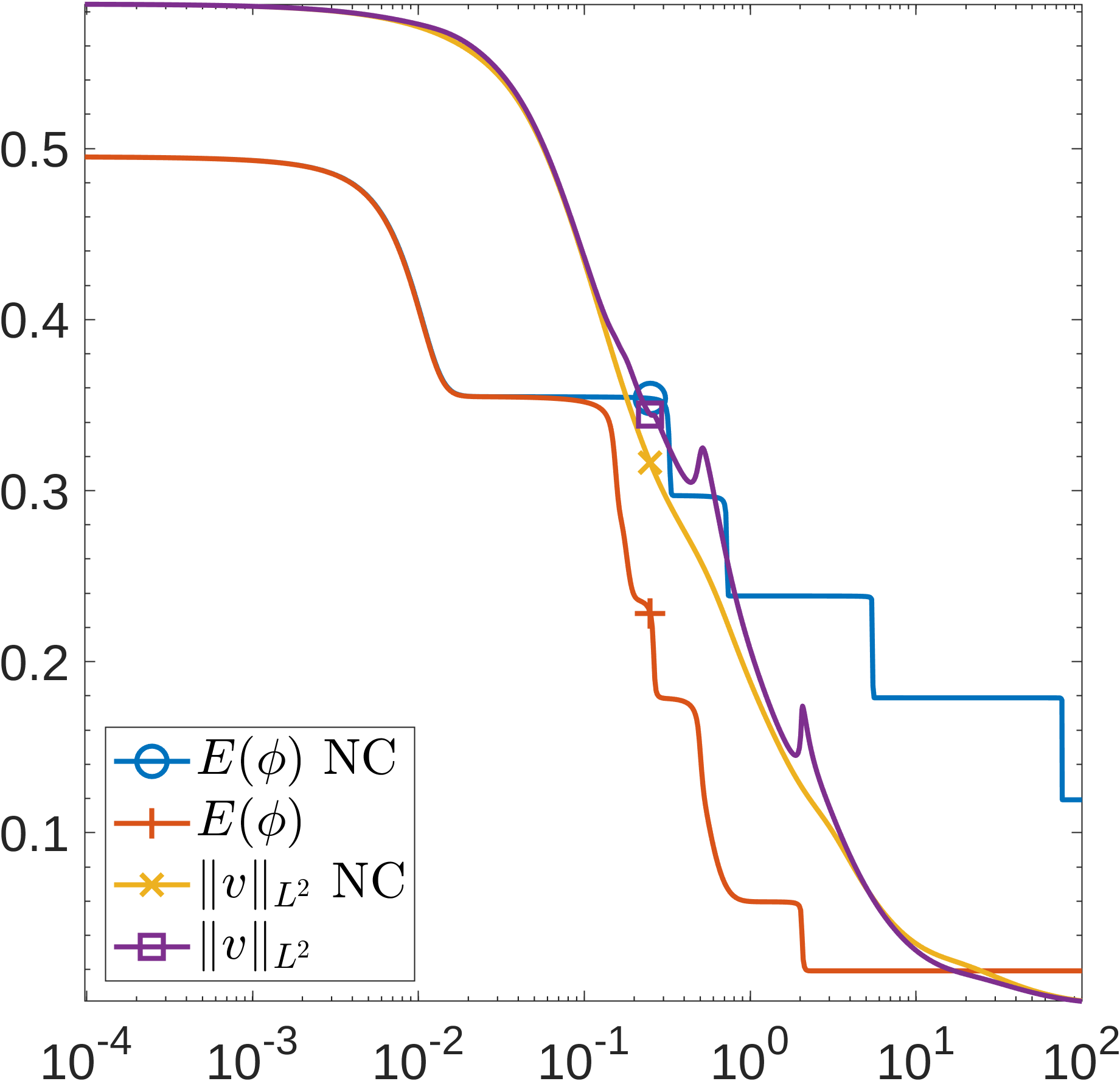}
\caption{\label{fig_En} (linear-log scale) Free Energy of $\phi$ vs. time, and also the kinetic energy of $v$ vs. time. The coupled solution injects energy into the velocity precisely when the free energy decays.}
\end{subfigure}
\hfill
\begin{subfigure}[t]{0.48\textwidth}
\includegraphics[width=1\textwidth]{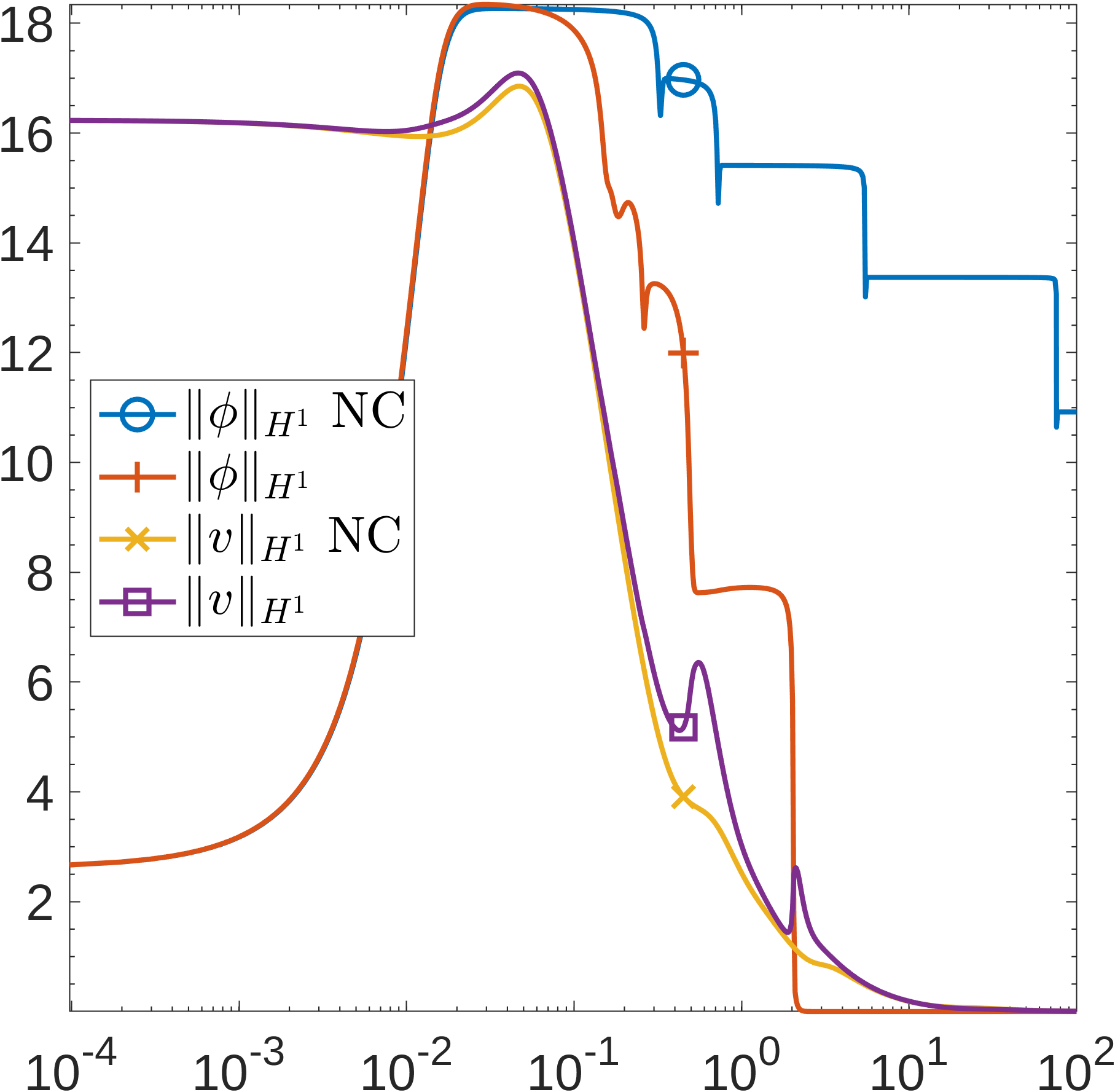}
\caption{\label{fig_H1}  (linear-log scale) $H^1$ semi-norms vs. time. Similar to the free energy in Figure \ref{fig_En}, the $H^1$ of the coupled solution decays much faster than that of the uncoupled solution.}
\end{subfigure}
\caption{\label{fig_norms} Free energy and various norms ($y$-axis) vs. time ($x$-axis) of the system with no coupling (NC) and coupled solutions of \eqref{pre-main} for the simulation described above.} 
\end{figure}

\FloatBarrier

\subsection{Divergence-form coupling}
We briefly consider the divergence forms \eqref{pre-main_div_form1} and \eqref{pre-main_div_form2}.  
Simulations\footnote{These simulations were run with a nearly identical setup to those above, but required slightly higher resolution ($N=4096$), and hence  different randomized initial data for $\phi$, since the random initial data for $\phi$ was generated at each grid point.} of \eqref{pre-main_div_form2} produced qualitatively similar results to those of \eqref{pre-main}, hence we do not show the evolution of the dynamics for the interest of brevity.  However, the decay rates were somewhat slower for solutions to the divergence-form equations as can be seen in Figure \ref{fig_norms_div}.  

\begin{figure}[ht] 
\begin{subfigure}[t]{0.49\textwidth}
\includegraphics[width=1\textwidth]{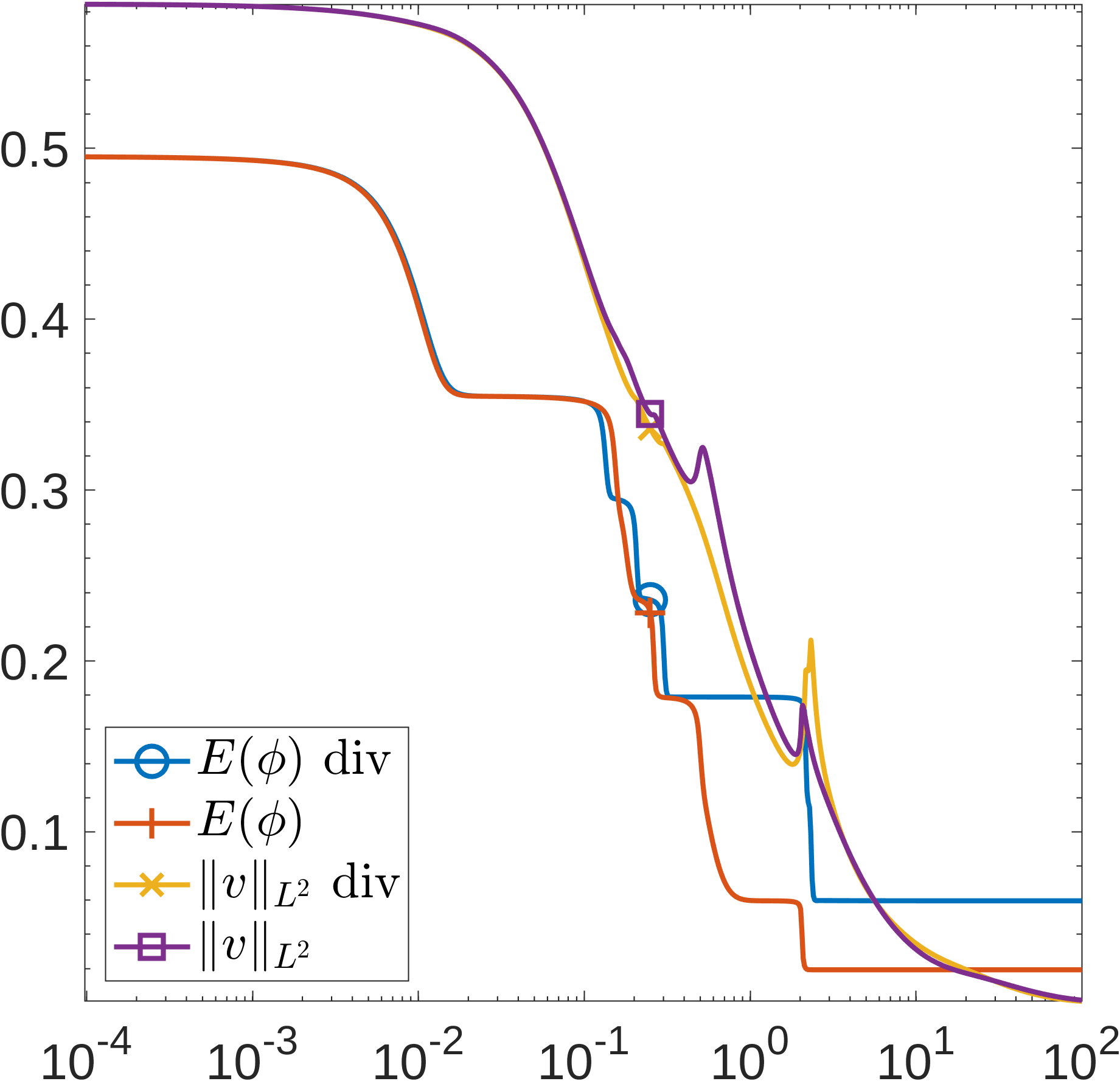}
\caption{\label{fig_En_div} (linear-log scale) Free Energy of $\phi$ vs. time, and also the kinetic energy of $v$ vs. time. The energy injection into the velocity appears to be less pronounced in the divergence-form coupling.}
\end{subfigure}
\hfill
\begin{subfigure}[t]{0.48\textwidth}
\includegraphics[width=1\textwidth]{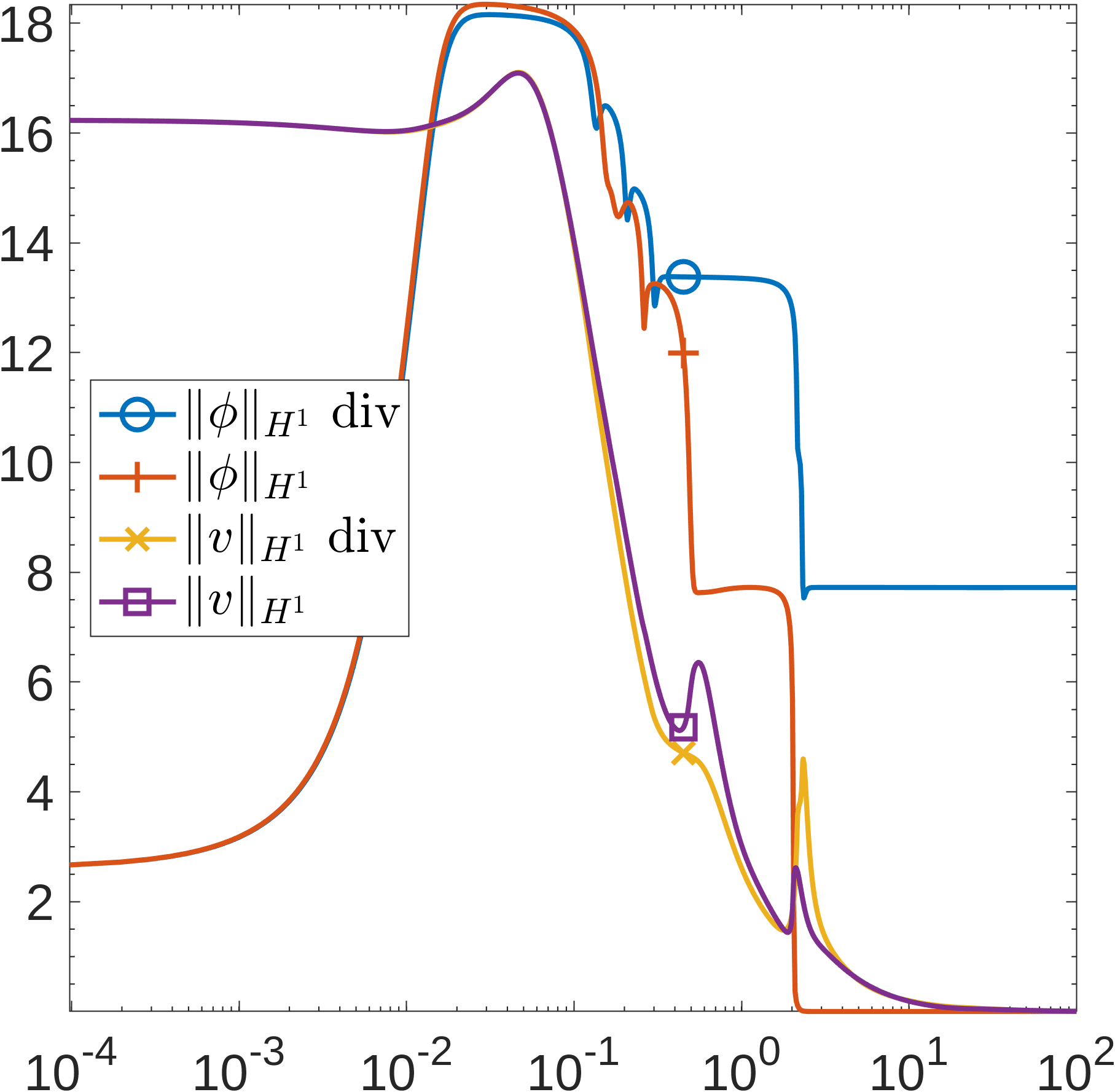}
\caption{\label{fig_H1_div}   (linear-log scale) $H^1$ semi-norms vs. time. Similar to the free energy in Figure \ref{fig_En_div}, the $H^1$ norm of the coupled solution decays much faster than that of the divergence-form coupling.}
\end{subfigure}
\caption{\label{fig_norms_div} Free energy and various norms ($y$-axis) vs. time ($x$-axis) of the divergence form coupling \eqref{pre-main_div_form1} (div) and the advective form \eqref{pre-main} for the simulation described above.}
\end{figure}

\FloatBarrier

\section{Qualitative Properties} \label{chns_section}

We now turn to analysis of qualitative properties of 
\eqref{pre-main}, beginning with a consideration 
of energy. Specifically, we consider $F(\phi)$ in the form \eqref{quarticF}, 
and take $M \equiv 1$, giving the system 
\begin{subequations}
\label{main}
\begin{align}
\label{main_phi}
\phi_t + v \phi_x &= \mu_{xx},
\\\label{main_mu}
\mu &= -\kappa \phi_{xx} + \alpha \phi^3 - \beta \phi,
\\\label{main_v}
v_t + v v_x &= \nu v_{xx} + K \mu \phi_x.
\end{align}
\end{subequations}

In this section, we prove the global existence, uniqueness, and continuous dependence on initial data of the solution to this system. 

\subsection{Energy Balance} \label{energy_balance_section}
To illustrate ideas, we first sketch a basic energy estimate formally before proving our results rigorously in subsection \ref{global_section}.
Taking the (formal) inner product of the momentum equation 
in \eqref{main} with $v$ we obtain 
\begin{equation} \label{from_momentum}
\frac{1}{2} \frac{d}{dt} |v|^2 + \nu |v_x|^2 
=  K\langle \mu \phi_x, v \rangle,
\end{equation}
where $|\cdot|$ and $\langle \cdot, \cdot \rangle$ respectively
denote the standard norm and inner product on $L^2 (-L, L)$, and we have used 
periodicity to compute 
\begin{equation*}
\int_{-L}^{+L} v^2 v_x dx = \int_{-L}^{+L} (\tfrac{1}{3} v^3)_x dx
= 0.
\end{equation*} 

For the energy \eqref{ch_energy}, formally integrating by parts, we compute 
\begin{equation*}
\begin{aligned}
\frac{dE}{dt} &= 
\int_{-L}^{+L} F' (\phi)\phi_t + \kappa \phi_x \phi_{xt} dx
= \int_{-L}^{+L} (-\kappa \phi_{xx} + F'(\phi)) \phi_t dx \\
&= \int_{-L}^{+L} \mu (\mu_{xx} - v \phi_x) dx
= -|\mu_x|^2 - \langle \mu \phi_x, v \rangle.
\end{aligned}
\end{equation*}
(The same result may be formally obtained by taking an inner-product of the phase equation with $\mu$.)
Multiplying by $K$ and combining the above relations, we obtain
\begin{equation} \label{energy_balance}
\frac{d}{dt} \Big(\frac{1}{2} |v|^2 + K E (t) \Big)
+ \nu |v_x|^2 + K|\mu_x|^2 = 0.
\end{equation}
Hence, at least formally, one has a bound on the kinetic energy $\tfrac12|v|^2$ and the chemical energy $E(t)$.
The arguments are made rigorous in the next subsection.

We note in passing that a similar analysis for the divergence-form equation \eqref{pre-main_div_form1} yields, after an application of Leibniz's rule and integration by parts,
\begin{align} \label{energy_balance_div_form}
\frac{d}{dt} \Big(\tfrac{1}{2} |v|^2 + K E (t) \Big)
+ \nu |v_x|^2 + K|\mu_x|^2 = 
-K\langle \mu \phi, v_x\rangle.
\end{align}
Hence there is no clear conserved quantity for this divergence-form equation. Finally, we observe that 
we can recover \eqref{energy_balance} in the case of divergence-form coupling by modifying
the coupling term in the momentum equation to the choice used in \eqref{pre-main_div_form2}.

\subsection{Global well-posedness} \label{global_section}

In this section, we establish global well-posedness for 
system \eqref{main} subject to periodic boundary conditions 
and initial condition
\begin{equation} \label{initial-condition}
(v, \phi)|_{t=0} = (v_0, \phi_0).
\end{equation}
We start with the following definition of what we mean by 
a strong solution to system \eqref{main}.
\begin{definition}\label{definition-strongsolution}
	Suppose that $v_0\in H^1(-L, L)$ and $\phi_0\in H^2(-L,L)$ are $2L-$periodic. Given any positive time $T>0$, we say $(v,\phi)$ is a strong solution to system \eqref{main}, subject to periodic boundary conditions, on the time interval $[0,T]$, if 
 \begin{enumerate}[(i)]
     \item $(v,\phi)$ has the regularities
	\begin{eqnarray*}
		&&\hskip-.8in
		v\in L^{\infty}(0,T;H^1)\cap L^{2}(0,T;H^2)\cap C([0,T]; L^2)\cap C_w([0,T];H^1),\\
		&&\hskip-.8in
		\phi \in L^{\infty}(0,T;H^2)\cap L^{2}(0,T;H^4)\cap C([0,T]; H^{1})\cap C_w([0,T];H^2);
	\end{eqnarray*} 
 \item $(v,\phi)$ solves system \eqref{main} in the following sense:
	\begin{eqnarray*}
		&&\hskip-.68in
		\partial_t \phi  +v\phi_x = \mu_{xx}  \;\;  \text{in} \; \; L^2(0,T; L^2),\\
		&&\hskip-.68in
		\partial_t v  +vv_x = \nu v_{xx} + K\mu \phi_x  \;\;  \text{in} \; \; L^2(0,T; L^2),
	\end{eqnarray*}
	with $\mu$ defined by $\mu = -\kappa \phi_{xx} + F'(\phi)$, and the initial condition \eqref{initial-condition}
    is satisfied.
 \end{enumerate}
\end{definition}

We have the following result concerning global well-posedness of system \eqref{main}:
\begin{theorem} \label{well-posedness-theorem}
	Suppose that $v_0\in H^1(-L, L)$ and $\phi_0\in H^2(-L,L)$ are $2L-$periodic. For any given time $T>0$ there exists a unique strong solution $(v,\phi)$ of system \eqref{main} on $[0,T]$ satisfying Definition \ref{definition-strongsolution}. Moreover, the unique strong solution $(v,\phi)$ depends continuously on the initial data.
\end{theorem}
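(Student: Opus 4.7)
The plan is to construct a solution of \eqref{main} via Galerkin approximation in the Fourier basis on $(-L,L)$, derive uniform a priori estimates at two levels of regularity, pass to the limit using Aubin--Lions compactness, and finally obtain uniqueness and continuous dependence through an energy estimate on the difference of two solutions. Let $P_N$ denote the orthogonal projection onto the span of Fourier modes with $|k|\leq N$; the truncated system $(v^N,\phi^N)$ is a quadratic ODE, locally well-posed by Picard, and global existence of $(v^N,\phi^N)$ on $[0,T]$ will be a consequence of the estimates below.

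The baseline estimate is the Galerkin analogue of the energy identity \eqref{energy_balance}, giving $v^N\in L^\infty(0,T;L^2)\cap L^2(0,T;H^1)$ together with uniform control of the Cahn--Hilliard energy $E(\phi^N)$. The latter yields $\phi^N\in L^\infty(0,T;H^1)\cap L^\infty(0,T;L^4)$ and $\mu^N_x\in L^2(0,T;L^2)$. Using the one-dimensional embedding $H^1\hookrightarrow L^\infty$, I control $\phi^N$ in $L^\infty((0,T)\times(-L,L))$ and therefore bound the mean $\bar\mu^N(t)=\tfrac{1}{2L}\int_{-L}^{+L}F'(\phi^N)\,dx$ uniformly in $t$; combined with Poincar\'e this promotes $\mu^N$ to $L^2(0,T;H^1)$, and inverting $\mu=-\kappa\phi_{xx}+F'(\phi)$ yields $\phi^N\in L^2(0,T;H^2)$.

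The heart of the proof is the higher-order a priori estimate needed to match Definition \ref{definition-strongsolution}. My plan is to test the momentum equation against $-v^N_{xx}$ and the phase equation against $\phi^N_{xxxx}$, which exposes the dissipations $\nu\|v^N_{xx}\|^2$ and $\kappa\|\phi^N_{xxxx}\|^2$. Three resulting right-hand-side terms are the principal obstacle: the Burgers term $\int(v^N_x)^3\,dx$, the coupling $K\int\mu^N\phi^N_x\,v^N_{xx}\,dx$, and the cubic contribution $\int(F'(\phi^N))_{xx}\,\phi^N_{xxxx}\,dx$. In one dimension each can be tamed by combining the Agmon/Gagliardo--Nirenberg inequality $\|f\|_{L^\infty}\leq C\|f\|_{L^2}^{1/2}\|f_x\|_{L^2}^{1/2}$ with Young's inequality, splitting every offending contribution into a small multiple of the available dissipation plus a term of the form $C(1+\|v^N_x\|^2+\|\phi^N_{xx}\|^2)\Psi(t)$ with $\Psi\in L^1(0,T)$ controlled by the basic estimates. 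A Gronwall argument then yields $v^N\in L^\infty(0,T;H^1)\cap L^2(0,T;H^2)$ and $\phi^N\in L^\infty(0,T;H^2)\cap L^2(0,T;H^4)$. This is the technical core of the argument and the step I expect to require the most care, because the Burgers nonlinearity, the coupling, and the cubic phase term all draw from the same dissipation budget and must be balanced simultaneously.

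With the uniform bounds above, the equations themselves give $\partial_tv^N,\partial_t\phi^N\in L^2(0,T;L^2)$, so Aubin--Lions produces a subsequence converging strongly in $L^2(0,T;L^2)$ (and in $L^2(0,T;H^1)$ for $\phi^N$). This strong convergence is enough to pass to the limit in every nonlinear term, including the cubic $\alpha(\phi^N)^3$ and the couplings $v^N\phi^N_x$ and $\mu^N\phi^N_x$; the continuity-in-time statements of Definition \ref{definition-strongsolution} follow from the Lions--Magenes interpolation lemma, while weak continuity into the top spaces is standard from the uniform bounds. For uniqueness and continuous dependence I would set $(\tilde v,\tilde\phi)=(v^1-v^2,\phi^1-\phi^2)$, subtract the two copies of \eqref{main}, and derive a Gronwall estimate for $\|\tilde v\|^2+\|\tilde\phi\|_{H^1}^2$; the already-established $L^\infty$ bounds on $v^i$ and $\phi^i$ in space-time control every product of a difference with a solution, so that the Gronwall constant depends only on those bounds and on $T$, yielding both uniqueness and Lipschitz dependence on initial data.
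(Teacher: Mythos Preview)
Your proposal is correct and follows essentially the same architecture as the paper: Galerkin approximation in the Fourier basis, a two-tier a priori estimate (energy level, then $H^1\times H^2$ level obtained by testing the momentum equation with $-v_{xx}$ and the phase equation with $\phi_{xxxx}$), Aubin--Lions compactness to pass to the limit, and a Gronwall estimate on $\|\tilde v\|_{L^2}^2+\|\tilde\phi\|_{H^1}^2$ for uniqueness and continuous dependence.

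One minor difference is worth noting at the baseline step. You exploit the nonnegativity of $F$ in \eqref{quarticF} so that the energy identity \eqref{energy_balance} alone controls $\|\phi^N\|_{H^1}$, and then recover $\mu^N\in L^2(0,T;H^1)$ by bounding the mean of $\mu^N$ via $\phi^N\in L^\infty_{t,x}$ and applying Poincar\'e. The paper instead tests the phase equation additionally with $2\beta K\phi$ and combines with \eqref{energy_balance} to close a Gronwall inequality (see \eqref{L2-2}--\eqref{L2-3}); it also inserts an intermediate step testing with $-\phi_{xx}$ to get $\phi\in L^2(0,T;H^3)$, which is not actually needed for the subsequent $H^1\times H^2$ estimate. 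Your route is slightly cleaner here. A small caution on the uniqueness sketch: the phrase ``$L^\infty$ bounds on $v^i$ in space-time control every product'' is a bit loose---terms like $\int v_1\tilde\phi_x\tilde\phi_{xx}\,dx$ require either absorbing $|\tilde\phi_{xx}|^2$ into the available dissipation or invoking $v_1\in L^2(0,T;H^2)$ as the paper does---but either way the argument closes.
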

For the proof, we first establish formal \textit{a priori} estimates for solutions of system \eqref{main} in section \ref{a-priori-estimate}. These estimates can be justified rigorously by deriving them first to the Galerkin approximation system and then passing to the limit using the Aubin-Lions Lemma (see, for example, \cite[Corollary 4]{SIMON}).

The existence of a strong solution is established in section \ref{existence}, while the uniqueness and continuous dependence on initial data is established in section \ref{uniqueness}.

\subsubsection{\textit{A priori} estimates}\label{a-priori-estimate}

We begin the proof of Theorem \ref{well-posedness-theorem} by establishing formal \textit{a priori} estimates for the solutions of system \eqref{main}. Taking the inner product of the phase equation in \eqref{main} with $2\beta K\phi$ and using the relation $\mu = -\kappa \phi_{xx} + \alpha \phi^3 - \beta \phi$, we obtain
\begin{equation} \label{phi}
\frac{d}{dt} (\beta K |\phi|^2) + 2 \kappa\beta K |\phi_{xx}|^2 + 6\alpha\beta K \int_{-L}^{+L} \phi^2 \phi_x^2 dx
= 2\beta^2 K |\phi_x|^2 + \beta K \int_{-L}^{+L} v_x\phi^2 dx,
\end{equation}
where in this calculation we have integrated by parts and used the periodic boundary conditions. 
By adding \eqref{energy_balance} and \eqref{phi} together, we obtain
\begin{equation} \label{L2-1}
\begin{aligned}
&\frac{d}{dt} \Big(\frac{1}{2}|v|^2 + \frac{1}{2}\beta K |\phi|^2 + \frac{1}{4}\alpha K \|\phi\|_{L^4}^4 + \frac{1}{2} \kappa K |\phi_x|^2 \Big) + \nu |v_x|^2 + K |\mu_x|^2 + 2\kappa\beta K |\phi_{xx}|^2 \\
&\quad \leq 2\beta^2 K |\phi_x|^2 + \beta K \left|\int_{-L}^{+L} v_x\phi^2 dx\right|. 
\end{aligned}
\end{equation}
Using H\"older's inequality and Young's inequality, we obtain
\begin{equation*}
\left|\int_{-L}^{+L} v_x\phi^2 dx \right| \leq  |v_x| \|\phi\|_{L^4}^2 \leq \frac{\nu}{2} |v_x|^2 + \frac{1}{2\nu}\|\phi\|_{L^4}^4.
\end{equation*}
By virtue of this, \eqref{L2-1} becomes
\begin{equation} \label{L2-2}
\begin{aligned}
&\frac{d}{dt} \Big(\frac{1}{2}|v|^2 + \frac{1}{2}\beta K |\phi|^2 + \frac{1}{4}\alpha K \|\phi\|_{L^4}^4 + \frac{1}{2} \kappa K |\phi_x|^2 \Big) + \frac{\nu}{2} |v_x|^2 + K |\mu_x|^2 + 2\kappa\beta K |\phi_{xx}|^2 \\
&\quad \leq  C\Big(\frac{1}{4}\alpha K \|\phi\|_{L^4}^4 + \frac{1}{2} \kappa K|\phi_x|^2\Big),
\end{aligned}
\end{equation}
where $C = C(\nu,\alpha,\beta,\kappa,K)$.\footnote{Note that in \eqref{L2-2}, the two summands in the right-hand side no longer have matching dimensions as we have factored out the constants for the sake of clarity. However, it is straightforward to reproduce these estimates in a dimensionally consistent manner.  For notational simplicity, we continue similarly throughout the paper without further comment.}
Using Gr\"onwall's inequality, we obtain
\begin{equation} \label{L2-3}
\begin{aligned}
&\Big(\frac{1}{2}|v|^2 +  \frac{1}{2}\beta K|\phi|^2 + \frac{1}{4}\alpha K\|\phi\|_{L^4}^4 + \frac{1}{2} \kappa K |\phi_x|^2 \Big)(t) 
\\
&+ \int_0^t \Big( \frac{\nu}{2} |v_x|^2 + K|\mu_x|^2 +  2\kappa\beta K |\phi_{xx}|^2\Big)(s) ds \\
\leq & \Big(\frac{1}{2}|v(0)|^2 + \frac{1}{2}\beta K |\phi(0)|^2 + \frac{1}{4}\alpha K\|\phi(0)\|_{L^4}^4 +  \frac{1}{2} \kappa K|\phi_x(0)|^2 \Big) e^{Ct} ,
\end{aligned}
\end{equation}
for arbitrary time $t\geq 0$. 
Therefore, for arbitrary time $T>0$ the following bounds hold:
\begin{equation} \label{L2-4}
\begin{aligned}
&v \in L^\infty(0,T; L^2) \cap L^2(0,T;H^1); \\
&\phi \in L^\infty(0,T; H^1) \cap L^2(0,T; H^2); \\
&\mu \in L^2(0,T; H^1).
\end{aligned}
\end{equation}

Next, we take the inner product of the phase equation in \eqref{main} with $-\phi_{xx}$. By integration by parts and periodic boundary condition, and 
also using the relation $(F')_x = 3\alpha \phi^2 \phi_x - \beta \phi_x$, we obtain
\begin{equation} \label{H1-1}
\begin{aligned}
\frac{1}{2}\frac{d}{dt} |\phi_x|^2  +\kappa|\phi_{xxx}|^2 &\leq  \int_{-L}^{+L} \Big| v\phi_x \phi_{xx} \Big|+ \Big|(F')_x \phi_{xxx} \Big| dx  \\
&\leq |v| \|\phi_x\|_{L^\infty}  |\phi_{xx}| + C_1(\|\phi\|_{L^\infty}^2 |\phi_{x}| + |\phi_x|) |\phi_{xxx}| \\
&\leq C_2 |v| \|\phi\|_{H^2}^2 + \frac{\kappa}{2}|\phi_{xxx}|^2 + C_3(\|\phi\|_{H^1}^6 + \|\phi\|_{H^1}^2),
\end{aligned}  
\end{equation}
where we have used H\"older's inequality, Young's inequality and a Sobolev inequality, and 
introduced fixed sufficiently large constants $C_1$, $C_2$, and $C_3$. 
Therefore, 
\begin{equation} \label{H1-2}
\frac{d}{dt} |\phi_x|^2  +\kappa|\phi_{xxx}|^2 \leq   2C_2|v| \|\phi\|_{H^2}^2 + 2C_3(\|\phi\|_{H^1}^2 + \|\phi\|_{H^1}^6)=: K_1(t).
\end{equation}
From \eqref{L2-4}, we know $K_1 \in L^1 (0,T)$ for arbitrary $T>0$. Integrating \eqref{H1-2} from $0$ to $t$, we obtain
\begin{equation} \label{H1-3}
|\phi_x|^2(t)  + \int_0^t \kappa|\phi_{xxx}|^2(s) ds \leq  |\phi_x|^2(0) + \int_0^t K_1(s) ds,
\end{equation}
and this implies that 
\begin{equation} \label{H1-4}
\phi \in L^\infty(0,T; H^1) \cap L^2(0,T; H^3).
\end{equation}

Next, taking the inner product of the phase equation in \eqref{main} with $\phi_{xxxx}$, and of the momentum equation in \eqref{main} with $-v_{xx}$, 
we integrate by parts and use the periodic boundary conditions to obtain the 
inequality
\begin{equation} \label{H2-1}
\begin{aligned}
&\frac{d}{dt} \Big(\frac{1}{2}|v_x|^2 + \frac{1}{2} |\phi_{xx}|^2 \Big) + \nu |v_{xx}|^2 + \kappa |\phi_{xxxx}|^2 \\
&= \int_{-L}^{+L} \Big[ vv_xv_{xx} - K\mu \phi_x v_{xx} + (F')_{xx} \phi_{xxxx} - v\phi_x \phi_{xxxx} \Big] dx\\
&=:I_1 + I_2 + I_3+ I_4.
\end{aligned}
\end{equation}
Owing again to H\"older's inequality, Young's inequality and a Sobolev inequality, we obtain the following estimates for $I_1,\ldots,I_4$,
\begin{equation*} 
\begin{aligned}
|I_1| &\leq  |v_{xx}| |v_x| \|v\|_{L^\infty} \leq C |v_{xx}| |v_x| \|v\|_{H^1} \leq \frac{\nu}{4} |v_{xx}|^2 + \frac{C^2}{\nu} \|v\|_{H^1}^2  |v_x|^2,\\
|I_2| & \leq K|\mu| \|\phi_x\|_{L^\infty} |v_{xx}|\leq \frac{\nu}{4} |v_{xx}|^2 + C |\mu|^2 (\|\phi\|^2_{H^1} + |\phi_{xx}|^2),\\
|I_3| & \leq |(F')_{xx}| |\phi_{xxxx}| \leq \frac{\kappa}{4} |\phi_{xxxx}|^2 + C|6\alpha \phi \phi_x^2 + 3\alpha \phi^2 \phi_{xx} -\beta\phi_{xx}|^2 \\
&\leq \frac{\kappa}{4} |\phi_{xxxx}|^2  + \widetilde{C} (\|\phi\|_{H^1}^4 +1) \|\phi\|_{H^2}^2, \\
|I_4| & \leq |v|\|\phi_x\|_{L^\infty}|\phi_{xxxx}| \leq \frac{\kappa}{4} |\phi_{xxxx}|^2 + C|v|^2\|\phi\|_{H^2}^2.
\end{aligned}
\end{equation*}
Plugging these estimates back into \eqref{H2-1}, we obtain the inequality
\begin{equation} \label{H2-2}
\begin{aligned}
&\quad\frac{d}{dt} \Big(|v_x|^2 +  |\phi_{xx}|^2 \Big) + \nu |v_{xx}|^2 + \kappa |\phi_{xxxx}|^2 \\
& \leq C_1(\|v\|_{H^1}^2 + |\mu|^2)(|v_x|^2 +  |\phi_{xx}|^2) + C_2|\mu|^2\|\phi\|_{H^1}^2 + C_3 (\|\phi\|_{H^1}^4 + |v|^2 +1) \|\phi\|_{H^2}^2.
\end{aligned}
\end{equation}
At this point, we introduce the quantities $$K_2(t) :=C_1(\|v\|_{H^1}^2 + |\mu|^2),$$ $$K_3(t):= C_2|\mu|^2\|\phi\|_{H^1}^2 + C_3 (\|\phi\|_{H^1}^4 + |v|^2 +1) \|\phi\|_{H^2}^2,$$
noting from \eqref{L2-4} that $K_2, K_3 \in L^1(0,T)$ for arbitrary time $T>0$. Therefore, applying Gr\"onwall's inequality to \eqref{H2-2} we obtain the inequality
\begin{equation} \label{H2-3}
\begin{aligned}
&\Big(|v_x|^2 +  |\phi_{xx}|^2 \Big)(t) + \int_0^t (\nu |v_{xx}|^2 + \kappa |\phi_{xxxx}|^2)(s)ds \\
& \leq \Big(|v_x(0)|^2 +  |\phi_{xx}(0)|^2 + \int_0^t K_3(s)ds\Big) \exp\Big(\int_0^t K_2(s)ds\Big).
\end{aligned}
\end{equation}
This together with \eqref{L2-4} implies that for any arbitrary time $T>0$,
\begin{equation} \label{H2-4}
\begin{aligned}
&v \in L^\infty(0,T; H^1) \cap L^2(0,T;H^2); \\
&\phi \in L^\infty(0,T; H^2) \cap L^2(0,T; H^4); \\
&\mu \in L^\infty(0,T; L^2) \cap L^2(0,T; H^2).
\end{aligned}
\end{equation}
In order to use Aubin-Lions Compactness Theorem, we should additionally establish bounds for $v_t$ and $\phi_t$. Taking the $L^2$ inner product in both space and time of the phase equation and momentum equation in \eqref{main} with some test function $\psi\in L^2(0,T;L^2)$, we use H\"older's inequality and a Sobolev inequality similarly as before to show that 
\begin{equation}\label{phit}
\begin{aligned}
   |\langle \phi_t, \psi \rangle| &\leq |\langle v\phi_x , \psi \rangle| + |\langle \mu_{xx} , \psi \rangle| \\
   &\leq C_1\|v\|_{L^\infty(0,T; L^2)} \|\phi\|_{L^2(0,T; H^2)} \|\psi\|_{L^2(0,T; L^2)} + C_2 \|\mu\|_{L^2(0,T; H^2)} \|\psi\|_{L^2(0,T; L^2)},
\end{aligned}
\end{equation}
and
\begin{equation}\label{vt}
\begin{aligned}
|\langle v_t, \psi \rangle| &\leq |\langle vv_x , \psi \rangle| + |\langle \nu v_{xx} , \psi \rangle| + |\langle K\mu \phi_x , \psi \rangle| \\
&\leq C_1\|v\|_{L^\infty(0,T; L^2)} \|v\|_{L^2(0,T; H^2)} \|\psi\|_{L^2(0,T; L^2)} + C_2 \|v\|_{L^2(0,T; H^2)} \|\psi\|_{L^2(0,T; L^2)}  \\
&+ C_3 \|\mu\|_{L^\infty(0,T; L^2)} \|\phi\|_{L^2(0,T; H^2)} \|\psi\|_{L^2(0,T; L^2)}.
\end{aligned}
\end{equation}
Thanks to \eqref{H2-4} and since $\psi\in L^2(0,T;L^2)$, we see the right hand sides of \eqref{phit} and \eqref{vt} are finite. Therefore, we conclude that 
\begin{equation} \label{vtphit}
v_t, \phi_t \in L^2(0,T;L^2).
\end{equation}

\subsubsection{Existence of strong solutions}\label{existence}
We now employ the standard Galerkin approximation procedure to establish the existence of strong solutions of system \eqref{main}. For any $k\in \mathbb{Z}$ and $m \in \mathbb{N}$, we set
\begin{equation} \label{base}
\psi_k := e^{k\frac{\pi}{L} i},
\end{equation}
and 
\begin{equation} \label{basespace}
V_m := \bigg\{ \psi \in L^2(-L,L)\; : \; \psi= \sum\limits_{|k|\leq m} a_k \phi_k, \; a_{-k}=a_{k}^{*}  \bigg \},
\end{equation}
observing that $\{\psi_k\}_{k\in \mathbb{Z}}$ forms an orthonormal basis of $L^2(-L, L)$, and $V_m$ is a finite-dimensional subspace of $L^2(-L, L)$. For any function $f\in L^2(-L, L)$, we set $\hat{f_j}=\langle f,\psi_j\rangle$, and we let 
$P_m$ denote the orthogonal projection from $L^2(-L, L)$ to $V_m$, namely $P_m f := \sum_{|k|\leq m} \hat{f_k} \psi_k$. 
We note here that for any $f\in H^1(-L,L)$ it holds that $(P_m f)_x = P_m(f_x)$. Letting now
\begin{equation} \label{basefunction}
\phi_m:=\sum_{|k|\leq m}a_k(t)\psi_k(x), \;\; v_m:=\sum_{|k|\leq m}b_k(t)\psi_k(x),
\end{equation}
we consider the following Galerkin approximation system for \eqref{main}:
\begin{equation} \label{main-Galerkin}
\begin{aligned}
\partial_t\phi_m + P_m(v_m \partial_x\phi_m) &= P_m \partial_{xx}\mu_{m}, \\
\mu_m &= -\kappa \partial_{xx}\phi_{m} + P_m F' (\phi_m), \\
\partial_t v_m + P_m(v_m \partial_x v_m) &= \nu \partial_{xx}v_{m} + K P_m(\mu_m \partial_x\phi_m),
\end{aligned}
\end{equation}
subject to periodic boundary conditions and the following initial conditions:
\begin{equation} \label{initial-condition-Galerkin}
(\phi_m(0), v_m(0)) = (P_m\phi_0,  P_m v_0).
\end{equation}
For each $m\geq 1$, the Galerkin approximation system \eqref{main-Galerkin} corresponds to a first-order system of ordinary differential equations in the coefficients $a_k$ and $b_k$ for $0\leq |k| \leq m$, with at most quadratic nonlinearity. Therefore, by the Picard-Lindel\"of Theorem, there exists some $t_m > 0$ such that system \eqref{main-Galerkin} admits a unique solution $(\phi_m, v_m)$ on the interval $[0, t_m]$. Since $(\phi_m, v_m)$ have finitely many modes, they are smooth functions, and therefore if one repeats the arguments concerning the \textit{a priori} estimates for the solution, one obtains the same estimates as in the previous section for the Galerkin approximate solution $(\phi_m, v_m)$. More precisely, we first establish an estimate similar as \eqref{L2-3},
\begin{equation} \label{L2-3-Galerkin}
\begin{aligned}
&\Big(\frac12|v_m|^2 + \frac12\beta K|\phi_m|^2 + \frac14\alpha K\|\phi_m\|_{L^4}^4 +  \frac12\kappa K|\partial_x\phi_m|^2 \Big)(t) 
\\
&+ \int_0^t \Big(\frac\nu 2|\partial_xv_m|^2 + K|\partial_x\mu_m|^2 +  2\kappa\beta K|\partial_{xx}\phi_{m}|^2\Big)(s) ds \\
\leq&\Big(\frac12|v_m(0)|^2 + \frac12\beta K|\phi_m(0)|^2 + \frac14\alpha K\|\phi_m(0)\|_{L^4}^4 +  \frac12\kappa K|\partial_x\phi_m(0)|^2 \Big) e^{Ct} \\
\leq &\Big(\frac12|v_0|^2 + \frac12\beta K|\phi_0|^2 + \frac14C\alpha K\|\phi_0\|_{H^1}^4 + \frac12\kappa K |\partial_x\phi_0|^2 \Big) e^{Ct},
\end{aligned}
\end{equation}
where we have applied Sobolev embedding $H^1\hookrightarrow L^4$ in the last inequality.
Notice that the right hand side of \eqref{L2-3-Galerkin} is uniformly bounded with respect to $m$. Therefore, for arbitrary time $T>0$ it holds that $\|\phi_m\|_{L^\infty(0,T;L^2)}$ and $\|v_m\|_{L^\infty(0,T;L^2)}$ remain bounded. This implies that for any $m\in \mathbb{N}$, the solution $(\phi_m, v_m)$ of \eqref{main-Galerkin} exists globally in time. Next, following the same process as in the previous section, for arbitrary time $T>0$, we obtain the following uniform bounds with respect to $m$ for the sequence of Galerkin approximate solutions $(\phi_m, v_m)$ and corresponding $\mu_m$, and their time derivative $(\partial_t\phi_m, \partial_t v_m)$:
\begin{equation} \label{H2-4-Galerkin}
\begin{aligned}
&\{v_m\}_{m\in\mathbb N}\;\;\text{are uniformly bounded in}\;\; L^\infty(0,T; H^1) \cap L^2(0,T;H^2); \\
&\{\phi_m\}_{m\in\mathbb N} \;\;\text{are uniformly bounded in}\;\; L^\infty(0,T; H^2) \cap L^2(0,T; H^4); \\
&\{\mu_m\}_{m\in\mathbb N} \;\;\text{are uniformly bounded in}\;\; L^\infty(0,T; L^2) \cap L^2(0,T; H^2); \\
&\{\partial_t\phi_m\}_{m\in\mathbb N}, \; \{\partial_t v_m\}_{m\in\mathbb N} \;\;\text{are uniformly bounded in}\;\; L^2(0,T; L^2).
\end{aligned}
\end{equation}
By the Banach--Alaoglu Theorem (see, e.g., \cite{Folland99}), there exist a subsequence, which we also denote by $(\phi_m, v_m, \mu_m, \partial_t\phi_m, \partial_t v_m)$, and corresponding
limits, $(\phi, v, \mu, \partial_t \phi, \partial_t v)$, respectively, such that
\begin{eqnarray} \label{weak-convergence}
\begin{aligned}
&v_m \overset{*}{\rightharpoonup} v  \;\text{ in} \; L^\infty(0,T;H^1) \; \text{and }v_m \rightharpoonup v\text{ in} \; L^2(0,T;H^2); \\
&\phi_m \overset{*}{\rightharpoonup} \phi \; \text{ in} \; L^\infty(0,T;H^2) \; \text{and } \phi_m \rightharpoonup \phi\text{ in}\; L^2(0,T;H^4); \\
&\mu_m \overset{*}{\rightharpoonup} \mu \; \text{ in} \; L^\infty(0,T;L^2)  \;\text{and } \mu_m \rightharpoonup \mu\text{ in} \; L^2(0,T;H^2); \\
&\partial_t\phi_m \rightharpoonup \partial_t\phi, \; \partial_t v_m \rightharpoonup \partial_t v \;  \text{ in} \; L^2(0,T;L^2).
\end{aligned}
\end{eqnarray}
Combining the Aubin-Lions Lemma with \eqref{H2-4-Galerkin}, we obtain the following strong convergence:
\begin{equation} \label{strong-convergence}
\begin{aligned}
&v_m \rightarrow v \;  \text{ in} \; L^2(0,T;H^{1})\cap C([0,T]; L^2) ; \\
&\phi_m \rightarrow \phi \;  \text{ in} \; L^2(0,T;H^{3})\cap C([0,T]; H^{1}).
\end{aligned}
\end{equation}
As $v\in L^\infty(0,T;H^1) \cap C([0,T]; L^2)$ and $\phi \in L^\infty(0,T;H^2) \cap C([0,T]; H^1)$, according to \cite[Theorem 2.1]{Strauss_1966_PJM_Weak_continuity} (see also \cite[Chapter 3, Lemma 1.4]{temam2001navier}), it follows that 
\begin{equation}
    v\in C_w([0,T];H^1),\quad \phi\in C_w([0,T];H^2).
\end{equation}
As $v_m(0)\to v_0$ in $H^1$ and $\phi_m(0)\to \phi_0$ in $H^2$, the above implies that $v(0)=v_0$ and $\phi(0)=\phi_0$, i.e., the limits $(v,\phi)$ satisfy the desired initial conditions.

In order to verify that the limit function $\mu$ is the same as its counterpart defined in \eqref{main}, 
we begin with the inequality
\begin{equation*} 
\begin{aligned}
&\|\mu_m - (-\kappa\phi_{xx}+ F'(\phi))\|_{L^2(0,T;L^2)}= \|-(\kappa\partial_{xx}\phi_m - \kappa \phi_{xx}) + (P_m F' (\phi_m) - F'(\phi))\|_{L^2(0,T;L^2)} \\
&\leq \kappa\|\partial_{xx}\phi_m - \phi_{xx}\|_{L^2(0,T;L^2)} + \|P_m F' (\phi_m) - F'(\phi)\|_{L^2(0,T;L^2)} =: I_m + J_m.
\end{aligned}
\end{equation*}
For $I_m$, we can use \eqref{strong-convergence} to see that as $m$ tends to $\infty$, $I_m\rightarrow 0$. For $J_m$, using H\"older's inequality and a Sobolev inequality, we estimate
\begin{equation} \label{mu-2}
\begin{aligned}
J_m &\leq \alpha\| P_m(\phi_m^3-\phi^3)\|_{L^2(0,T;L^2)} + \alpha\|P_m(\phi^3) - \phi^3 \|_{L^2(0,T;L^2)} + \beta \|\phi_m - \phi\|_{L^2(0,T;L^2)} \\
& \leq \alpha \| \phi_m-\phi\|_{L^\infty(0,T;H^1)} \| \phi_m^2 + \phi_m \phi + \phi^2\|_{L^2(0,T;L^2)} \\
&\hspace{1cm}+ \alpha\|P_m(\phi^3) - \phi^3 \|_{L^2(0,T;L^2)} + \beta \|\phi_m - \phi\|_{L^2(0,T;L^2)}.
\end{aligned}
\end{equation}
Using the estimates \eqref{H2-4} and \eqref{H2-4-Galerkin}, along with the strong convergence \eqref{strong-convergence}
and the fact that $P_m f \rightarrow f$ in $L^2$ for any function $f\in L^2$, we can pass $m\rightarrow \infty$ to see that the right-hand side of \eqref{mu-2} approaches to $0$. Therefore, 
\begin{equation} \label{mu-convergence}
\mu_m \rightarrow -\kappa\phi_{xx} + F'(\phi)  \; \; \text{in} \;\; {L^2(0,T;L^2)}.
\end{equation}
By uniqueness of the limit, we conclude that the limit $\mu$ is the same as the function $\mu$ specified in \eqref{main}. 

Next, we show that $(\phi, v)$ is a solution of \eqref{main}. Taking the $L^2$ inner product in both space and time of the phase equation in \eqref{main-Galerkin} with test function $\psi\in L^2(0,T;H^2)$, we obtain the relation 
\begin{equation} \label{phase-convergence}
\begin{aligned}
 0&=\langle\partial_t\phi_m + P_m(v_m \partial_x\phi_m) - P_m \partial_{xx}\mu_{m}, \psi \rangle \\
 &= \langle\partial_t\phi_m , \psi \rangle - \langle \mu_{m}, \psi_{xx} \rangle + \langle P_m(v_m \partial_x\phi_m) , \psi \rangle 
:= I_m + J_m + K_m,
\end{aligned}
\end{equation}
where in obtaining the second line we integrated by parts and employed the periodic boundary conditions, along with the observation that $P_m$ and $\partial_x$ commute.   
From \eqref{weak-convergence}, we know $I_m \rightarrow \langle\partial_t\phi , \psi \rangle $ as $m \to \infty$, and since $\psi_{xx}\in L^2(0,T;L^2)$, we also see that $J_m \rightarrow \langle -\mu , \psi_{xx} \rangle = \langle -\mu_{xx} , \psi \rangle$. For $K_m$, using H\"older's inequality and a Sobolev inequality, we obtain
\begin{equation} \label{phase-convergence-nonlinear}
\begin{aligned}
&|K_m - \langle v\phi_x,\psi\rangle| \leq |\langle (v_m - v) \partial_x \phi_m, P_m\psi\rangle| + |\langle v (\partial_x \phi_m - \phi_x), P_m\psi\rangle| + |\langle P_m(v\phi_x) - v\phi_x, \psi\rangle|\\
& \leq \|v_m-v\|_{L^\infty(0,T;L^2)} \|\phi_m\|_{L^2(0,T;H^2)}  \|P_m \psi\|_{L^2(0,T;L^2)} + \|P_m(v\phi_x) - v\phi_x\|_{L^2(0,T;L^2)} \|\psi\|_{L^2(0,T;L^2)} \\
& \hspace{1cm}+ \|v\|_{L^\infty(0,T;H^1)} \|\phi_m - \phi\|_{L^2(0,T;H^1)} \|P_m \psi\|_{L^2(0,T;L^2)}.
\end{aligned}
\end{equation}
Appealing now to \eqref{H2-4}, \eqref{H2-4-Galerkin}, and the strong convergence \eqref{strong-convergence}, we find that the right-hand side of \eqref{phase-convergence-nonlinear} approaches $0$ as $m\rightarrow \infty$. Therefore, $K_m \rightarrow \langle v\phi_x,\psi\rangle$ as $m \to \infty$. Now we can pass the limit $m\rightarrow \infty$ in \eqref{phase-convergence} to get
\begin{equation*} 
\begin{aligned}
\langle\partial_t\phi+ v \phi_x - \partial_{xx}\mu, \psi \rangle = 0.
\end{aligned}
\end{equation*}
This last relation is true for arbitrary $\psi\in L^2(0,T;H^2)$, and therefore $\phi_t+v\phi_x=\mu_{xx}$ in $L^2(0,T;H^{-2})$. Moreover, since all terms in the phase equation are actually in $L^2(0,T;L^2)$, we can further conclude that
\begin{equation} \label{phase-convergence-result}
\phi_t+v\phi_x=\mu_{xx} \;\; \text{in} \;\; L^2(0,T;L^2).
\end{equation}
Similarly, taking the $L^2$ inner product in both space and time of the momentum equation in \eqref{main-Galerkin} with test function $\psi\in L^2(0,T;H^2)$, we obtain
\begin{equation} \label{momentum-convergence}
\begin{aligned}
0&=\langle\partial_t v_m + P_m(v_m \partial_x v_m)- \nu \partial_{xx}v_{m} - K P_m(\mu_m \partial_x\phi_m),  \psi \rangle \\
&= \langle\partial_t v_m , \psi \rangle - \nu\langle  v_m, \psi_{xx} \rangle + \langle P_m(v_m \partial_x v_m) , \psi \rangle - K \langle P_m(\mu_m\partial_x \phi_m) , \psi \rangle\\
&=: I_m+J_m+K_m+L_m.
\end{aligned}
\end{equation}
For $I_m$, $J_m$, and $K_m$, the steps used above for the phase equation can be repeated to show that as $m \to \infty$,  $I_m \rightarrow \langle\partial_t v, \psi \rangle$, $J_m\rightarrow \langle -\nu v_{xx}, \psi \rangle$, and $K_m \rightarrow \langle vv_x, \psi \rangle$. For $L_m$, we can proceed similarly as with $K_m$, except now using \eqref{mu-convergence}, and we find that $L_m \rightarrow \langle -K \mu\phi_x, \psi \rangle$ as $m \to \infty$. Since all terms in the momentum equation are in $L^2(0,T;L^2)$, we conclude that
\begin{equation} \label{momentum-convergence-result}
v_t+vv_x=\nu v_{xx} + K\mu\phi_x \;\; \text{in} \;\; L^2(0,T;L^2).
\end{equation}
Therefore, $(v,\phi)$ is a strong solution to system \eqref{main}.

\subsubsection{Uniqueness of solutions and continuous dependence on initial data}\label{uniqueness}

We now conclude the proof of Theorem \ref{well-posedness-theorem} by establishing uniqueness of 
the strong solution. For this, we let $(\phi_1, v_1)$ and $(\phi_2, v_2)$ denote two strong solutions 
of system \eqref{main}, with initial data $((\phi_0)_1, (v_0)_1)$ and $((\phi_0)_2, (v_0)_2)$, respectively,
and we set $\phi := \phi_1 - \phi_2$, $v := v_1 - v_2$, $\phi_0 := (\phi_0)_1 - (\phi_0)_2$, and 
$v_0 := (v_0)_1 - (v_0)_2$. Taking the difference between equations for $(\phi_1, v_1)$ and for $(\phi_2, v_2)$, 
we find that 
\begin{equation} \label{uniqueness-equation}
\begin{aligned}
&\phi_t + v_1 \phi_x + v\partial_x \phi_2 + \kappa \phi_{xxxx} = \partial_{xx}\Big[\alpha \phi(\phi_1^2 + \phi_1 \phi_2 + \phi_2^2) - \beta \phi\Big],\\
&v_t + v_1 v_x + v\partial_x v_2 +\frac{1}{2}\kappa K \partial_x\Big(\phi_x (\partial_x\phi_1 + \partial_x\phi_2)\Big) - \nu v_{xx} \\
&= \frac{\alpha K}{4}\partial_x\Big(\phi (\phi_1 + \phi_2)(\phi_1^2 + \phi_2^2)\Big) - \frac{\beta K}{2} \partial_x\Big(\phi (\phi_1 + \phi_2)\Big).
\end{aligned}
\end{equation}
Taking the $L^2$ inner product of the phase equation in \eqref{uniqueness-equation} with $\phi$ and $-\phi_{xx}$, and 
taking the $L^2$ inner product of the momentum equation in \eqref{uniqueness-equation} with $v$, we integrate by parts 
and use the periodic boundary conditions to see that 
\begin{equation} \label{uniqueness-pf1}
\begin{aligned}
&\frac{1}{2}\frac{d}{dt}(|\phi|^2 + |\phi_x|^2 + |v|^2) + \kappa(|\phi_{xx}|^2 + |\phi_{xxx}|^2) + \nu |v_{x}|^2 \\
&= \int_{-L}^{+L}\Bigg[ \Big(v_1 \phi_x + v\partial_x \phi_2\Big)(-\phi + \phi_{xx}) + \partial_{xx}\Big[\alpha \phi(\phi_1^2 + \phi_1 \phi_2 + \phi_2^2) - \beta \phi\Big] (-\phi + \phi_{xx}) \\
&\quad- \Big(v_1 v_x + v\partial_x v_2\Big)v - \frac{1}{2}\kappa K \partial_x\Big(\phi_x (\partial_x\phi_1 + \partial_x\phi_2)\Big)v + \frac{\alpha K}{4}\partial_x\Big(\phi (\phi_1 + \phi_2)(\phi_1^2 + \phi_2^2)\Big)v \\
&\quad- \frac{\beta K}{2} \partial_x\Big(\phi (\phi_1 + \phi_2)\Big)v\Bigg] dx =: I_1 + I_2 + I_3 + I_4 + I_5 + I_6.
\end{aligned}
\end{equation}
Using H\"older's inequality, Young's inequality and Sobolev inequalities, we obtain
\begin{equation*} 
\begin{aligned}
|I_1| &= \Big|\int_{-L}^{+L} \frac{1}{2}\partial_x v_{1}(\phi^2 - \phi_x^2) + v\partial_x \phi_2(-\phi + \phi_{xx}) dx\Big| \\
&\leq C_1 \|v_1\|_{H^2}(|\phi|^2 + |\phi_x|^2) + C_2\|\phi_2\|_{H^2}(|\phi|^2 + |v|^2) + C_3\|\phi_2\|_{H^2}^2 |v|^2 + \frac{\kappa}{4}|\phi_{xx}|^2\\
&\leq C(\|v_1\|_{H^2} + \|\phi_2\|_{H^2} + \|\phi_2\|_{H^2}^2)(|\phi|^2 + |\phi_x|^2 + |v|^2)+ \frac{\kappa}{4}|\phi_{xx}|^2, \\
|I_2| &= \Big|\int_{-L}^{+L} \Big[\alpha \phi(\phi_1^2 + \phi_1 \phi_2 + \phi_2^2) - \beta \phi\Big] \phi_{xx} +  \partial_x \Big[\alpha \phi(\phi_1^2 + \phi_1 \phi_2 + \phi_2^2) - \beta \phi\Big] \phi_{xxx} dx\Big|\\
&\leq C(|\phi|^2+|\phi_x|^2) (\|\phi_1^2 + \phi_1 \phi_2 + \phi_2^2\|_{H^2}^2 + 1)  + \frac{\kappa}{4}(|\phi_{xx}|^2 + |\phi_{xxx}|^2) \\
&\leq C(1 + \|\phi_1\|_{H^2}^4 + \|\phi_2\|_{H^2}^4)(|\phi|^2+|\phi_x|^2) + \frac{\kappa}{4}(|\phi_{xx}|^2 + |\phi_{xxx}|^2),
\end{aligned}
\end{equation*}
and likewise,
\begin{equation*} 
\begin{aligned}
|I_3| &= \Big|\int_{-L}^{+L} \frac{1}{2}\partial_x v_{1}v^2  - v^2\partial_x \phi_2 dx\Big| \leq C(\|v_1\|_{H^2} + \|\phi_2\|_{H^2})|v|^2,\\
|I_4| &\leq C|v_x||\phi_x|\|\partial_x\phi_1 + \partial_x\phi_2\|_{L^\infty} \leq C(\|\phi_1\|_{H^2}^2 + \|\phi_2\|_{H^2}^2) |\phi_x|^2 + \frac{\nu}{6} |v_x|^2 ,\\
|I_5| &\leq C|v_x||\phi|\|(\phi_1+\phi_2)(\phi_1^2+\phi_2^2)\|_{L^\infty} \leq C(\|\phi_1\|_{H^1}^6 + \|\phi_2\|_{H^1}^6)  |\phi|^2 + \frac{\nu}{6} |v_x|^2, \\
|I_6| &\leq C|v_x||\phi|\|(\phi_1+\phi_2)\|_{L^\infty} \leq C(\|\phi_1\|_{H^1}^2 + \|\phi_2\|_{H^1}^2)  |\phi|^2 + \frac{\nu}{6} |v_x|^2.
\end{aligned}
\end{equation*}
From above, using Young's inequality we obtain
\begin{equation} \label{uniqueness-pf3}
\begin{aligned}
&\quad\frac{d}{dt}(|\phi|^2 + |\phi_x|^2 + |v|^2) + \kappa(|\phi_{xx}|^2 + |\phi_{xxx}|^2) + \nu |v_{x}|^2 \\
&\leq C(1 + \|v_1\|_{H^2} + \|\phi_1\|_{H^2}^6 + \|\phi_2\|_{H^2}^6) (|\phi|^2 + |\phi_x|^2 + |v|^2).
\end{aligned}
\end{equation}
Thanks to \eqref{H2-4}, we know $(1 +  \|v_1\|_{H^2} + \|\phi_1\|_{H^2}^6 + \|\phi_2\|_{H^2}^6) \in L^1(0,T)$ for arbitrary time $T>0$. By Gr\"onwall's inequality, it the follows that
\begin{equation} \label{uniqueness-result}
\begin{aligned}
\Big(|\phi|^2 + |\phi_x|^2 + |v|^2\Big)(t) \leq  &\Big(|\phi_0|^2 + |(\phi_0)_x|^2 + |v_0|^2\Big) 
\\
&\hspace{1cm}\times\exp\Big(\int_0^t C(1 +  \|v_1\|_{H^2} + \|\phi_1\|_{H^2}^6 + \|\phi_2\|_{H^2}^6) ds\Big).
\end{aligned}
\end{equation}
The above inequality proves the continuous dependence of the
solutions on the initial data, and in particular, when
$\phi_0 \equiv0$ and $ v_0 \equiv0$, it follows that $\phi(t)=v(t)=0$ for all $t\geq 0$.
Therefore, the strong solution is unique.

\section{Conclusion} \label{conclusions-section}

In this paper, we carried out a detailed exploration of coarsening rates for a one-dimensional (1D) model of two-phase 
flow. Our primary goal was to take advantage of analytic results that are only currently available in the 1D
setting to build a framework for analyzing and evaluating coarsening rates for models of two-phase flow. A key 
step in this program was the introduction of a natural measure of coarsening applicable to any solution 
of the Cahn--Hilliard equation. This coarsening measure directly links the free energy of a solution to a 
corresponding periodic solution with the same free energy, taking then the period of this solution to be 
a gauge for coarseness. Notably, this method circumvents the need for any \textit{a priori} assumptions about the 
structure of the solution, such as near-periodicity. Using this measure of coarseness, we compared known analytic
rates with rates obtained by numerical simulations, obtaining the first such direct comparisons. 

Our research also extended to examining coarsening dynamics within the context of the Cahn--Hilliard equation 
when coupled with Burgers equation, a natural one-dimensional analog of the Cahn--Hilliard--Navier--Stokes 
system. A key focus was on comparing the coarsening rates between the coupled and uncoupled systems through 
computational analysis, and we found, at least for the computations carried out, that coupling has the effect 
of substantially increasing coarsening rates. The primary mechanism for this increase seems to be the appearance
of gradients in the velocity profile, which expand some enriched regions and compress others.  

In addition to these findings, we proposed what we call the Burgers--Cahn--Hilliard System, and provided the first proof of global well-posedness for this system. The proof is distinct from proofs of well-posedness for the Cahn--Hilliard--Navier--Stokes System, as BCH has   no divergence-free condition, and hence the nonlinear terms must be handled differently.

In future work, we would like to extend elements of these results to multi-dimensional Cahn--Hilliard Systems, the Cahn--Hilliard--Navier--Stokes system, and other related equations. Although our 1D investigations point toward promising directions to pursue in such studies, we expect new phenomena to emerge in those cases, and leave further commentary to those future works.

\section*{Acknowledgements}
  A.L. was partially supported by NSF grants DMS-2206762 and CMMI-1953346, and USGS Grant No. G23AC00156-01. Q.L. was partially supported by an AMS--Simons travel grant.

\bibliographystyle{abbrv}
\bibliography{refs}
\end{document}